\DeclareFontFamily{U}{mathx}{\hyphenchar\font45}
\DeclareFontShape{U}{mathx}{m}{n}{
      <5> <6> <7> <8> <9> <10>
      <10.95> <12> <14.4> <17.28> <20.74> <24.88>
      mathx10
      }{}
\DeclareSymbolFont{mathx}{U}{mathx}{m}{n}
\DeclareMathAccent{\widecheck}{0}{mathx}{"71}
\DeclareMathAccent{\wideparen}{0}{mathx}{"75}
\numberwithin{equation}{section}
\newtheorem{prop}{Proposition}[section]
\newtheorem{theorem}[prop]{Theorem}
\newtheorem{cor}[prop]{Corollary}
\newtheorem{lemma}[prop]{Lemma}
\theoremstyle{definition}
\newtheorem{defn}[prop]{Definition}
\newtheorem{example}[prop]{Example}
\theoremstyle{remark}
\newtheorem{rem}[prop]{Remark}
\newcommand{\A}{\mathscr{A}}
\newcommand{\C}{\mathbb{C}}
\newcommand{\D}{\mathcal{D}}
\newcommand{\F}{\mathcal{F}}
\newcommand{\G}{\mathcal{G}}
\renewcommand{\H}{\mathcal{H}}
\newcommand{\N}{\mathbb{N}}
\newcommand{\Q}{\mathbb{Q}}
\newcommand{\R}{\mathbb{R}}
\newcommand{\T}{\mathbb{T}}
\newcommand{\Z}{\mathbb{Z}}
\newcommand{\Norm}[1]{\left\Vert #1 \right\Vert}
\renewcommand{\subset}{\subseteq}
\DeclareMathOperator{\spn}{span}
\DeclareMathOperator{\supp}{supp}
\begin{document}

\title{Subspaces of $L^2(G)$ invariant under translation by an abelian subgroup}
\author{Joseph W.\ Iverson}
\address{Department of Mathematics, University of Oregon, Eugene, OR 97403--1222, USA}
\email{iverson@uoregon.edu}
\date{\today}

\keywords{Riesz basis, frame, Weil's formula, LCA group, Zak transform, translation-invariant space, range function, dual integrable}
\subjclass[2010]{Primary: 42C15, 43A32, 47A15, Secondary: 43A05, 43A65}

\maketitle

\begin{abstract}
For a second countable locally compact group $G$ and a closed abelian subgroup $H$, we give a range function classification of closed subspaces in $L^2(G)$ invariant under left translation by $H$. For a family $\A \subset L^2(G)$, this classification ties with a set of conditions under which the translations of $\A$ by $H$ form a continuous frame or a Riesz sequence. When $G$ is abelian, our work relies on a fiberization map; for the more general case, we introduce an analogue of the Zak transform. Both transformations intertwine translation with modulation, and both rely on a new group-theoretic tool: for a closed subgroup $\Gamma \subset G$, we produce a measure on the space $\Gamma \backslash G$ of right cosets that gives a measure space isomorphism $G \cong \Gamma \times \Gamma \backslash G$. Outside of the group setting, we consider a more general problem: for a measure space $X$ and a Hilbert space $\H$, we investigate conditions under which a family of functions in $L^2(X;\H)$ multiplies with a basis-like system in $L^2(X)$ to produce a continuous frame or a Riesz sequence in $L^2(X;\H)$. Finally, we explore connections with dual integrable representations of LCA groups, as introduced by Hern{\'a}ndez et al.\ in \cite{HSWW}.
\end{abstract}

\medskip

%INTRODUCTION===========================================================
\section{Introduction} \label{sec:intro}

The goal of this paper is to give a comprehensive analysis of translation invariance from the new vantage point of the Zak transform. Let $G$ be a second countable locally compact group, and let $H\subset G$ be a closed abelian subgroup. Given a function $f\colon G \to \C$ and $y \in G$, we will write $L_y f\colon G \to \C$ for the \emph{left translation} given by
\[ (L_y f)(x) = f(y^{-1}x). \]
A closed subspace $M\subset L^2(G)$ is called \emph{$H$-translation-invariant}, or $H$-TI, if $L_\xi f \in M$ whenever $f\in M$ and $\xi \in H$. Our main result classifies $H$-TI spaces in terms of range functions and an abstract version of the Zak transform. This is followed by a set of conditions under which the left $H$-translates of a family $\A \subset L^2(G)$ form a continuous frame or a Riesz sequence. For the subgroup $\Z^n \subset \R^n$, similar results using the so-called fiberization map were given by de Boor, Devore, and Ron \cite{BDR1,BDR2} and Bownik \cite{B}, with wide applications to the theories of approximation, spline systems, wavelets, and Gabor systems. This work was generalized to the LCA setting by Kamyabi Gol and Raisi Tousi \cite{KR}; Cabrelli and Paternostro \cite{CP}; and Bownik and Ross \cite{BR}. Attacking from another direction, the ``extra'' invariance of an invariant subspace, the authors of \cite{ACHKM,ACP,ACP2,SW} describe $H$-TI spaces under the assumption that $H$ contains a nice subgoup $K$. All of these papers require that $G$ be abelian and that $G/H$ be compact. By replacing fiberization with a kind of Zak transform, we can go far beyond these assumptions. Indeed, we will allow $G$ to be any (possibly nonabelian) second countable locally compact group, and we will ask only that $H$ be closed and abelian. This grants us access to a number of basic examples that were previously inaccessible, like $\Z^m \subset \R^n$, corresponding to shifts in $L^2(\R^n)$ by a lattice of less-than-full rank, and $\R^m \subset \R^n$. It also opens the door to a world of more sophisticated examples, such as the non-normal copy of $\R$ in the $ax+b$ group. 

All previous research on $H$-TI spaces has required the larger group $G$ to be abelian, and analysis has relied on the fiberization map. In effect, fiberization applies the Fourier transform of the larger group $G$ and then splits off the annihilator subgroup $H^*\subset \hat{G}$. The main innovation of this paper is to replace fiberization with an abstract version of the Zak transform, which reverses this procedure, first splitting off the subgroup $H$, and then applying \emph{its} Fourier transform. By shifting the Fourier transform from the big group to the little group, we dispense with the need for commuting assumptions on $G$. Moreover, we can eliminate any remaining topological assumptions on $H$ by defining our Zak transform in terms of a new group-theoretic tool, which is of interest in its own right. This accounts for the greater generality of our results.

While the Zak transform has been widely used in the abelian setting for analysis of shift-modulation invariant subspaces, its use in the classification of $H$-TI spaces is appearing here for the first time.\footnote{While putting the finishing touches on this paper, the author learned that Barbieri, Hern{\'a}ndez, and Paternostro \cite{BHP2} simultaneously and independently developed a classification of $H$-TI spaces using an identical notion of Zak transform, with many of the same results as appear in our Sections \ref{sec:Zak} and \ref{sec:HTIStr}. The hypotheses in \cite{BHP2} differ slightly from ours: their encompassing group $G$ is assumed to be abelian throughout, and their version of our Theorem \ref{thm:HTIFrame} assumes the generating family $\A$ is countable. While our papers have nontrivial intersection, each contains a considerable amount of material not present in the other.} Indeed, our Zak transform analysis gives a new classification of shift-invariant subspaces in $L^2(\R)$. Moreover, we give new conditions for the integer shifts of a family $\A \subset L^2(\R)$ to form a frame or a Riesz sequence. These results have immediate applications for wavelets and multi-resolution analysis.

In addition to our work with the Zak transform, we are able to finish the story on fiberization for a second countable LCA group $G$ with a closed subgroup $H$. The previous development of fiberization required $G/H$ to be compact. By using our new group-theoretic tool, we can remove this assumption. The result is a fiberization analysis of $H$-TI spaces that works in a wide variety of previously inaccessible situations, including the subgroups $\Z^m \subset \R^n$ and $\R^m \subset \R^n$ mentioned above.

The paper is organized as follows. In Section \ref{sec:frames}, we investigate a common source of frames and Riesz bases in a measure-theoretic setting. Let $X$ be a measure space, and let $\H$ be a separable Hilbert space. Any two functions $f\colon X \to \C$ and $\varphi \colon X \to \H$ can be multiplied pointwise, with product $(f \varphi)(x) = f(x)\varphi(x)$. Given a basis-like set $\D \subset L^2(X)$ and a family $\A \subset L^2(X;\H)$, we give conditions under which $\{f \varphi : f \in \D, \varphi \in \A\}$ forms a continuous frame or a Riesz sequence in $L^2(X;\H)$. The main results here are Theorems \ref{thm:Riesz} and \ref{thm:frame}, relating frame and Riesz sequence conditions in $L^2(X;\H)$ to the corresponding pointwise conditions in $\H$. This section is meant as a companion to the recent work by Bownik and Ross \cite[\S 2]{BR} on range functions and multiplicative invariance in the measure-theoretic setting.

In Section \ref{sec:meas}, we develop a version of Weil's formula for right cosets. Given a closed subgroup $\Gamma \subset G$, we produce a measure on the space $\Gamma \backslash G$ that allows for a measure space isomorphism $G \cong \Gamma \times \Gamma \backslash G$. This isomorphism is the key to our development of the abstract Zak transform and fiberization map in Section \ref{sec:Zak}. There we give a wide variety of examples, and describe connections between the Zak transform and the fiberization map in the abelian setting. 

Both fiberization and the Zak transform are put to use in Section \ref{sec:HTIStr}, where we give our main results classifying $H$-TI spaces and describing conditions under which the left $H$-translates of a family of functions in $L^2(G)$ form a continuous frame or a Riesz sequence. At the end of this section, we analyze translation/modulation-invariant spaces under critical sampling in the abelian setting.

In Section \ref{sec:DualInt}, we consider the related problem of invariant subspaces for dual integrable representations of locally compact abelian (LCA) groups, introduced by Hern{\'a}ndez, \v{S}iki{\'c}, Weiss, and Wilson in \cite{HSWW}. We show that dual integrable representations are precisely those gotten from the translation action of an abelian subgroup, as in Section \ref{sec:HTIStr}. We then give a range function classification of invariant subspaces. Moreover, we explain when the orbit of a family of vectors produces a continuous frame. If the group is discrete, we do the same for Riesz sequences. Our results generalize those in \cite{HSWW}, which treated discrete LCA groups and cyclic subspaces.

\medskip

%ABSTRACT FRAMES=======================================================
\section{Frames and Riesz bases in $L^2(X;\H)$}\label{sec:frames}

Let $\A$ be a countable family of functions in $L^2(\R^n)$, and let
\[ E(\A) = \{T_k f : k \in \Z^n, f \in \A\},\]
where $T_k f(y) = f(y-k)$. In \cite{B}, Bownik studied $E(\A)$ through the fiberization map $\mathcal{T} \colon L^2(\R^n) \to L^2([0,1)^n; l^2(\Z^n))$ given by
\[ \mathcal{T}f(x) = ( \hat{f}(x + k))_{k\in \Z^n}. \]
The utility of $\mathcal{T}$ comes from the intertwining relation
\[ \mathcal{T} (T_k f)(x) = e^{2\pi i k \cdot x} \mathcal{T} f(x) \quad \text{for all }k\in \Z^n, \]
so that integer shifts in $L^2(\R^n)$ become modulations by an orthonormal basis of $L^2([0,1)^n)$ in $L^2([0,1)^n; l^2(\Z^n))$. Taking advantage of this correspondence, Bownik gave sufficient and necessary conditions for $E(\A)$ to form a frame or a Riesz basis for its closed linear span. Later, Cabrelli and Paternostro \cite{CP} and Kamyabi Gol and Raisi Tousi \cite{KR} generalized this method to the setting of a second countable LCA group $G$ with a closed discrete subgroup $H$ such that $G/H$ is compact. Bownik and Ross \cite{BR} have gone even further by removing the hypothesis that $H$ be discrete, replacing frames with so-called continuous frames. Each of these papers achieves its goal by transforming $L^2(G)$ into a space $L^2(X;\H)$, with $X$ a measure space and $\H$ a Hilbert space, in such a way that translations by the subgroup $H\subset G$ become modulations by a nice family of functions in $L^\infty(X)$. In this section, we consider the latter situation more generally. Namely, for a countable family $\A \subset L^2(X;\H)$ and a basis-like family $\D$ of functions on $X$, we investigate conditions under which the family of functions
\[ (g \varphi)(x) = g(x)\cdot \varphi(x), \qquad g \in \D,\, \varphi \in \A\]
form a continuous frame or a Riesz basis for their closed linear span. This work is complementary to a recent publication by Bownik and Ross \cite[\S 2]{BR} extending Helson's theory of multiplicative invariance \cite{H2}. We now describe their main results. 

\begin{defn}\label{def:DetRan}
Let $(X,\mu)$ be a measure space. A \emph{determining set} for $L^1(X)$ is a family of functions $\D \subset L^\infty(X)$ with the property that, for all $f\in L^1(X)$,
\[ \int_X f(x) g(x)\, d\mu(x) = 0 \quad \text{for all }g\in \D \implies f = 0. \]
Given a separable Hilbert space $\H$, a closed subspace $M \subset L^2(X;\H)$ is said to be $\D$\emph{-multiplication invariant}, or $\D$-MI, if for every $g \in \D$ and every $\varphi \in M$, the function $(g\varphi)(x) = g(x) \varphi(x)$ also belongs to $M$. Given a family $\A \subset L^2(X;\H)$, we denote
\[ S_\D(\A) = \overline{\spn}\{ g \varphi \colon g \in \D, \varphi \in \A\} \]
for the $\D$-MI space it generates, and
\[ E_\D(\A) = \{ g \varphi \colon g \in \D, \varphi \in \A\}. \]
We will consider $E_\D(\A)$ as a set with multiplicities.

A \emph{range function} is a mapping
\[ J \colon X \to \{ \text{closed subspaces of }\H\}. \]
For a range function $J$ and $x \in X$, we denote $P_J(x) \colon \H \to \H$ for the orthogonal projection onto $J(x)$. We say that $J$ is a \emph{measurable} range function if, for each $(u,v) \in \H \times \H$, the function $x \mapsto \langle P_J(x) u, v \rangle$ is measurable on $X$.
\end{defn}

Range functions have a long history in the classification of invariant subspaces, dating at least as far back as Helson \cite{H} and Srinivasan \cite{S} in 1964. More recently, Bownik \cite{B} used range functions to classify shift invariant subspaces of $L^2(\R^n)$. This program was continued in increasing generality by Cabrelli and Paternostro \cite{CP}, Kamyabi Gol and Raisi Tousi \cite{KR}, Currey, Mayeli, and Oussa \cite{CMO}, and Bownik and Ross \cite{BR}. Our results in Sections \ref{sec:HTIStr} and \ref{sec:DualInt} continue this line of research. 

The proposition below is a slight modification of Theorem 2.4 in \cite{BR}. See also Srinivasan \cite[Theorem 3]{S} and Helson \cite[Lecture VI, Theorem 8]{H} and \cite[Ch.\ 1, \S 3, Theorem 1]{H2}.

\begin{prop}\label{prop:AbstRan}
Let $(X,\mu)$ be a $\sigma$-finite measure space, let $\D$ be a determining set for $L^1(X)$, and let $\H$ be a separable Hilbert space.
\begin{enumerate}[(i)]
\item If $J\colon X \to \{\text{closed subspaces of }\H\}$ is a range function, then
\[ M_J = \{ \varphi \in L^2(X; \H) : \varphi(x) \in J(x) \text{ for a.e. }x \in X\} \]
is a closed $\D$-MI subspace of $L^2(X; \H)$.
\item The correspondence $J \mapsto M_J$ is a bijection between measurable range functions and closed $\D$-MI subspaces of $L^2(X;\H)$, provided we identify range functions that agree a.e.\ on $X$. 
\item Let $\A$ be a family of functions in $L^2(X;\H)$, let $\A_0 \subset \A$ be a countable dense subset, and let $J$ be the range function defined almost everywhere by
\begin{equation}\label{eq:AbstRan1}
J(x) = \overline{\spn}\{ \varphi(x) : \varphi \in \A_0\}.
\end{equation}
Then
\begin{equation}\label{eq:AbstRan2}
M_J = S_\D(\A_0) = S_\D(\A).
\end{equation}
\end{enumerate}
\end{prop}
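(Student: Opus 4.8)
The plan is to follow the classical range-function argument for multiplicatively invariant subspaces, in the form given by Bownik and Ross \cite[Theorem 2.4]{BR}, paying attention to the two places where the hypotheses on $\D$ and $\H$ are actually used. Part (i) is routine: linearity of $M_J$ is inherited from the fact that each $J(x)$ is a subspace; closedness follows by taking any $L^2$-convergent sequence in $M_J$, passing to a subsequence that converges pointwise a.e., and using closedness of each $J(x)$ to place the limit back in $M_J$; and $\D$-multiplication invariance holds because $g \in \D \subset L^\infty(X)$ keeps $g\varphi$ in $L^2(X;\H)$ while $g(x)\varphi(x) \in J(x)$ for a.e.\ $x$.

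For the surjectivity half of (ii), I would start with a closed $\D$-MI subspace $M$, choose a countable dense subset $\{\varphi_n\} \subset M$ (available since $L^2(X;\H)$ is separable when $X$ is $\sigma$-finite and $\H$ is separable), and set $J(x) = \overline{\spn}\{\varphi_n(x)\}$. The inclusion $M \subset M_J$ is immediate once $J$ is known to be measurable. The reverse inclusion is the heart of the matter and is exactly where both remaining hypotheses enter. Fixing $\eta \in M^\perp$ and $g \in \D$, the $\D$-invariance gives $g\varphi_n \in M$, so
\[ 0 = \langle g\varphi_n, \eta \rangle = \int_X g(x)\, \langle \varphi_n(x), \eta(x)\rangle \, d\mu(x). \]
The function $x \mapsto \langle \varphi_n(x), \eta(x)\rangle$ lies in $L^1(X)$ by Cauchy--Schwarz, so the determining-set property of $\D$ forces it to vanish a.e. Taking a countable union of the exceptional null sets over $n$ yields $\eta(x) \perp J(x)$ for a.e.\ $x$; hence every $\psi \in M_J$ satisfies $\langle \psi, \eta\rangle = 0$, and therefore $M_J \subset (M^\perp)^\perp = M$.

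For injectivity in (ii), I would invoke the standard fact that any measurable range function $J$ admits a countable family of measurable sections whose pointwise spans recover $J(x)$ a.e.; truncating these sections on an exhausting sequence of finite-measure sets (using $\sigma$-finiteness) produces such a family inside $M_J$ without changing the pointwise span. If $M_{J_1} = M_{J_2}$, the sections of $J_1$ then lie in $M_{J_2}$, giving $J_1(x) \subset J_2(x)$ a.e., and symmetry yields $J_1 = J_2$ a.e. Part (iii) combines these threads: $S_\D(\A_0) = S_\D(\A)$ because $\varphi \mapsto g\varphi$ is $L^2$-continuous for $g \in L^\infty$, so $L^2$-density of $\A_0$ in $\A$ passes to the generated spaces; and $M_J = S_\D(\A_0)$ follows by running the two inclusions of the surjectivity argument with $\A_0$ as the generating family --- the containment $S_\D(\A_0) \subset M_J$ from $\D$-invariance of $M_J$ together with $\varphi(x) \in J(x)$, and the containment $M_J \subset S_\D(\A_0)$ from the identical determining-set computation applied to $\eta \in S_\D(\A_0)^\perp$.

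The main obstacle is not the functional-analytic bookkeeping but the measurable-selection machinery underlying both directions of (ii): that $x \mapsto \overline{\spn}\{\varphi_n(x)\}$ is genuinely a measurable range function, and that every measurable range function has a countable spanning family of measurable sections. Both facts rely essentially on separability of $\H$ and are established in \cite{BR}; granting them, the present statement is a determining-set repackaging of their argument, with the determining-set hypothesis replacing their use of all of $L^\infty(X)$ precisely at the displayed integral step.
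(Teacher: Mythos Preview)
Your proposal is correct and follows the same architecture as the paper: both reduce parts (i), (ii), and the identity $M_J = S_\D(\A_0)$ to the range-function machinery of Bownik and Ross \cite{BR}, with the paper simply citing \cite[Proposition 2.1 and Theorem 2.4]{BR} while you sketch the argument.

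The one genuine difference is in the proof of $S_\D(\A_0) = S_\D(\A)$. You argue directly: for $\varphi \in \A$ and $g \in \D$, take $\varphi_k \in \A_0$ with $\varphi_k \to \varphi$ in $L^2(X;\H)$; then $g\varphi_k \to g\varphi$ since $g \in L^\infty$, and closedness of $S_\D(\A_0)$ gives $g\varphi \in S_\D(\A_0)$. The paper instead passes to a subsequence with $\varphi_k(x) \to \varphi(x)$ a.e., uses closedness of each $J(x)$ to place $\varphi(x) \in J(x)$ a.e., and concludes $\varphi \in M_J = S_\D(\A_0)$. Your route is more elementary here --- it avoids the detour through $M_J$ and pointwise convergence --- while the paper's route has the mild advantage of showing directly that $\A \subset M_J$, which is sometimes the form one wants downstream.
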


\begin{proof}
In \cite{BR}, Proposition 2.1 and Theorem 2.4 prove everything except the fact that $S_\D(\A_0) = S_\D(\A)$. One inclusion in this equality is obvious. For the other, let $\varphi \in \A$ be arbitrary, and let $\{\varphi_k\}_{k=1}^\infty$ be a sequence in $\A_0$ with $\varphi_k \to \varphi$. By passing to a subsequence if necessary, we may assume that $\varphi_k(x) \to \varphi(x)$ a.e. Since $J$ maps $X$ into the set of closed subspaces, $\varphi(x) \in J(x)$ for a.e.\ $x\in X$. In other words, $\varphi \in M_J = S_\D(\A_0)$. Since this holds for every $\varphi \in \A$, $S_\D(\A) \subset S_\D(\A_0)$.
\end{proof}

\smallskip

%Riesz sequences
\subsection{Riesz sequences}

We remind the reader that a countable family $(u_i)_{i \in I}$ of vectors in a Hilbert space $\H$ is called a \emph{Riesz sequence} if there are constants $0< A \leq B < \infty$ such that for all $(c_i)_{i\in I} \in l^2(I)$ with only finitely many $c_i \neq 0$,
\begin{equation}\label{eq:Riesz}
A \sum_{i \in I} |c_i|^2 \leq \Norm{ \sum_{i\in I} c_i u_i }^2 \leq B \sum_{i \in I} |c_i|^2.
\end{equation}
The constants $A$ and $B$ are called \emph{bounds}. When a Riesz sequence spans a dense subspace of $\H$, it is called a \emph{Riesz basis}.

The following theorem is an abstract version of \cite[Theorem 2.3(ii)]{B}. Our proof is a modification of the argument given there. See also \cite[Theorem 4.3]{CP} and \cite[Theorem 5.1]{BR}.

\begin{theorem}\label{thm:Riesz}
Let $(X,\mu)$ be a measure space with $\mu(X) < \infty$, and let $\H$ be a Hilbert space. For a countable family $\A \subset L^2(X;\H)$ and constants $0 < A \leq B < \infty$, the following are equivalent:
\begin{enumerate}[(i)]

\item For some orthonormal basis $\D$ of $L^2(X)$, $\{g \varphi : g \in \D, \varphi \in \A\}$ is a Riesz sequence in $L^2(X;\H)$ with bounds $A,B$.

\item For any Riesz sequence $(g_i)_{i\in I}$ in $L^2(X)$ with bounds $a,b$, $\{ g_i \varphi : i \in I, \varphi \in \A\}$ is a Riesz sequence in $L^2(X;\H)$ with bounds $aA$, $bB$.

% For any orthonormal basis $\D$ of $L^2(X)$, $\{g \varphi : g \in \D, \varphi \in \A\}$ is a Riesz sequence in $L^2(X;\H)$ with bounds $A,B$.

\item For any family $(f_\varphi)_{\varphi \in \A} \subset L^2(X)$ having only finitely many $f_\varphi \neq 0$,
\begin{equation}\label{eq:Riesz1}
A \sum_{\varphi \in \A} \int_X | f_\varphi(x) |^2\, d\mu(x) \leq \int_X \Norm{ \sum_{\varphi \in \A} f_\varphi(x) \varphi(x) }^2 d\mu(x) \leq B \sum_{\varphi \in \A} \int_X | f_\varphi(x) |^2\, d\mu(x).
\end{equation}

\item For any family $(f_\varphi)_{\varphi \in \A} \subset L^2(X)$ with $\sum_{\varphi \in \A} \Norm{ f_\varphi }_{L^2(X)}^2 < \infty$, \eqref{eq:Riesz1} holds.

\item For almost every $x\in X$, $\{\varphi(x) : \varphi \in \A \}$ is a Riesz sequence in $\H$ with bounds $A,B$.

\end{enumerate}
\end{theorem}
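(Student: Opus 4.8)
The plan is to treat the fiberized inequality \eqref{eq:Riesz1} in (iii) as the central object and route every other condition through it, establishing the scheme (ii) $\Rightarrow$ (i) $\Rightarrow$ (iii) $\Leftrightarrow$ (iv), (iii) $\Leftrightarrow$ (v), and (iii) $\Rightarrow$ (ii). This makes (iii) and (v) the two faces --- global on $X$ and pointwise on $\H$ --- from which all the basis-dependent statements follow.

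I would dispatch the easy linkages first. For (ii) $\Rightarrow$ (i), any orthonormal basis $\D$ of $L^2(X)$ is a Riesz sequence with bounds $a = b = 1$, so (ii) yields (i) with bounds $A, B$. For (iii) $\Leftrightarrow$ (iv), the implication (iv) $\Rightarrow$ (iii) is trivial, and for the converse the upper bound in \eqref{eq:Riesz1}, applied to tails of a square-summable family $(f_\varphi)$, shows that the partial sums of $\sum_\varphi f_\varphi \varphi$ are Cauchy in $L^2(X;\H)$ and hence convergent; both inequalities then pass to the limit. For (iii) $\Rightarrow$ (ii), given a Riesz sequence $(g_i)_{i\in I}$ with bounds $a, b$ and finitely supported coefficients $(c_{i,\varphi})$, I would set $f_\varphi = \sum_i c_{i,\varphi} g_i$, use the Riesz bounds of $(g_i)$ to sandwich $\sum_\varphi \Norm{f_\varphi}_{L^2(X)}^2$ between $a\sum_{i,\varphi}|c_{i,\varphi}|^2$ and $b\sum_{i,\varphi}|c_{i,\varphi}|^2$, and then feed $(f_\varphi)$ into \eqref{eq:Riesz1}; since $\sum_{i,\varphi} c_{i,\varphi}(g_i\varphi) = \sum_\varphi f_\varphi\varphi$, this produces the Riesz bounds $aA, bB$. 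Finally, for (i) $\Rightarrow$ (iii), I would expand each $f_\varphi$ in the orthonormal basis $\D$; Parseval converts the outer sums of \eqref{eq:Riesz1} into $\sum_{i,\varphi}|c_{i,\varphi}|^2$, so the Riesz-sequence hypothesis on $\{g\varphi\}$ gives \eqref{eq:Riesz1} first for $f_\varphi \in \spn\D$ and then, by the same density argument as in (iii) $\Rightarrow$ (iv), for all finitely supported $(f_\varphi)$.

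The heart of the theorem is (iii) $\Leftrightarrow$ (v). The direction (v) $\Rightarrow$ (iii) is a direct integration: substituting $c_\varphi = f_\varphi(x)$ into the pointwise bounds and integrating over $X$ yields \eqref{eq:Riesz1}. The reverse direction is the crux. After enumerating $\A$, I would fix a finite subset $F \subset \A$ and a coefficient vector $(c_\varphi)_{\varphi\in F}$ with rational real and imaginary parts, and argue by contradiction. If the lower bound failed on the measurable set $E = \{x : \Norm{\sum_\varphi c_\varphi\varphi(x)}^2 < A\sum_\varphi|c_\varphi|^2\}$ with $\mu(E) > 0$, then testing \eqref{eq:Riesz1} with $f_\varphi = c_\varphi\chi_E$ (which lies in $L^2(X)$ since $\mu(X) < \infty$) would force $A\sum_\varphi|c_\varphi|^2\mu(E) \leq \int_E \Norm{\sum_\varphi c_\varphi\varphi(x)}^2\,d\mu(x)$, contradicting the strict inequality defining $E$; the upper bound is symmetric. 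Intersecting the resulting full-measure sets over all finite $F$ and all rational $(c_\varphi)$ gives a single full-measure set on which every rational inequality holds, and continuity in $(c_\varphi)$ extends these to arbitrary coefficients, which is exactly (v).

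The main obstacle is this localization step, where the subtleties are measurability and the countable exhaustion. I would lean on the separability of $\H$ and the countability of $\A$ to reduce the uncountable family of inequalities (one per finite subset and per coefficient vector) to a countable collection of full-measure conditions, and on $\mu(X) < \infty$ to ensure that the test functions $c_\varphi\chi_E$ belong to $L^2(X)$. Measurability of $x \mapsto \Norm{\sum_\varphi c_\varphi\varphi(x)}^2$ follows from that of the individual $\varphi$, so each bad set $E$ is measurable and the contradiction argument is legitimate.
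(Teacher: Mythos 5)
Your proof is correct, and it rests on the same key techniques as the paper's --- Parseval's identity to pass between (i) and (iii), localization to a countable family of measurable ``bad sets'' to get from (iii) to (v), and pointwise application plus integration for the converse --- but the implication graph and two technical devices genuinely differ. The paper's cycle is (iii)$\Rightarrow$(v)$\Rightarrow$(ii)$\Rightarrow$(i)$\Rightarrow$(iii); you instead prove (iii)$\Rightarrow$(ii) directly by substituting $f_\varphi=\sum_i c_{i,\varphi}g_i$ into \eqref{eq:Riesz1}, a clean shortcut that avoids routing through the pointwise condition. More substantially, your handling of the lower bound in (i)$\Rightarrow$(iii) is simpler than the paper's: the paper gets the upper bound by Fatou and then, to recover the lower bound, must first extract $\Norm{\varphi(x)}^2\leq B$ a.e.\ from the already-established half of (v) in order to upgrade the $\liminf$ to a genuine limit of norms; you instead apply the upper bound of \eqref{eq:Riesz1} to the differences $p_{\varphi,k}-p_{\varphi,l}$ (still in the finite span of $\D$ and finitely supported in $\varphi$) to see that $\sum_\varphi p_{\varphi,k}\varphi$ is Cauchy in $L^2(X;\H)$, after which both bounds pass to the limit by continuity. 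For (iii)$\Rightarrow$(v) you test with rational coefficient vectors over finite subsets of $\A$ and exploit strictness of the inequality on a set of positive finite measure, where the paper tests with a countable dense set of finitely supported elements of $l^2(\A)$ and builds in a $1/n$ margin; these are interchangeable, and your use of $\mu(E)<\infty$ to convert a pointwise strict inequality into a strict inequality of integrals is valid precisely because $\mu(X)<\infty$. The one step you should write out in full is the identification, in (i)$\Rightarrow$(iii), of the $L^2(X;\H)$-limit of $\sum_\varphi p_{\varphi,k}\varphi$ with the pointwise product $\sum_\varphi f_\varphi\varphi$, which requires passing to a subsequence along which $p_{\varphi,k}(x)\to f_\varphi(x)$ a.e.
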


Condition (iii) can be read as a strictly stronger version of the usual definition of Riesz sequence, where the coefficient sequence $(c_\varphi)_{\varphi \in \A} \in \l^2(\A)$ has been replaced with a function sequence $(f_\varphi)_{\varphi \in \A} \in l^2(\A;L^2(X))$. As a consequence of Proposition \ref{prop:AbstRan}, the theorem remains valid if we replace every instance of ``Riesz sequence'' with ``Riesz basis''.

\smallskip

\noindent \emph{Proof of Theorem \ref{thm:Riesz}}.
(iii) $\implies$ (v).
Suppose there is a set $Y \subset X$ of positive measure such that, for each $x \in Y$, $\{ \varphi(x) : \varphi \in \A\}$ is \emph{not} a Riesz sequence in $\H$ because it fails the upper bound of \eqref{eq:Riesz}. We'll show that \eqref{eq:Riesz1} fails in the upper bound. Let $\{d_m\}_{m=1}^\infty$ be a dense subset of $l^2(\A)$ such that each $d_m = (d_{m,\varphi})_{\varphi \in \A}$ has only finitely many nonzero entries. For each $m,n \in \N$, put
\[ E_{m,n} = \{ x \in X : \Norm{ \sum_{\varphi \in \A} d_{m,\varphi} \varphi(x) }^2 > (B + \frac{1}{n}) \sum_{\varphi \in \A} |d_{m,\varphi}|^2\}, \]
which is well-defined up to a set of measure zero. If $x\notin \bigcup_{m,n=1}^\infty E_{m,n}$, then
\[ \Norm{ \sum_{\varphi \in \A} d_{m,\varphi} \varphi(x) }^2 \leq B \sum_{\varphi \in \A} |d_{m,\varphi}|^2 \quad \text{for all }m, \]
so $\{ \varphi(x): \varphi \in \A\}$ satisfies the upper bound of the Riesz condition with bound B. Consequently, one of the sets $E_{m,n}$ has $\mu(E_{m,n}) > 0$, and for this $m$ and $n$ we define a family of functions $(f_\varphi)_{\varphi \in \A}\subset L^2(X)$ by the formula $f_\varphi(x) = d_{m,\varphi} \mathbf{1}_{E_{m,n}}(x)$. Only finitely many of these functions are nonzero, yet
\[ \int_X \Norm{ \sum_{\varphi \in \A} f_\varphi(x) \varphi(x) }^2 d\mu(X) = \int_X \mathbf{1}_{E_{m,n}}(x) \Norm{ \sum_{\varphi \in \A} d_{m,\varphi} \varphi(x) }^2 d\mu(X) \]
\[  \geq \int_X \mathbf{1}_{E_{m,n}}(x)\cdot (B + \frac{1}{n}) \sum_{\varphi \in \A} | d_{m,\varphi} |^2\, d\mu(x) = (B + \frac{1}{n}) \int_X \sum_{\varphi \in \A} | d_{m,\varphi} \mathbf{1}_{E_{m,n}}(x)|^2\, d\mu(x) \]
\[ = (B + \frac{1}{n}) \sum_{\varphi \in \A} \int_X |f_\varphi(x)|^2\, d\mu(x). \]
Thus (iii) fails in the upper bound.  In other words, the upper bound in (iii) implies the upper bound in (v). A similar argument applies for the lower bounds.

(v) $\implies$ (ii).
Let $(g_i)_{i\in I}$ be a Riesz sequence in $L^2(X)$ with bounds $a,b$, and let $(c_{i,\varphi})_{i\in I, \varphi \in \A} \in \l^2(I \times \A)$ be a sequence having only finitely many nonzero terms. If (v) holds, then for a.e.\ $x \in X$ we apply the Riesz condition with the sequence $( \sum_{i \in I} c_{i,\varphi} g_i(x) )_{\varphi \in \A}$ to deduce
\[ A \sum_{\varphi \in \A} \left| \sum_{i \in I} c_{i,\varphi} g_i(x) \right|^2 \leq \Norm{ \sum_{\varphi \in \A} \left( \sum_{i \in I} c_{i,\varphi} g_i(x) \right) \varphi(x) }_{\H}^2 \leq B \sum_{\varphi \in \A} \left| \sum_{i \in I} c_{i,\varphi} g_i(x) \right|^2. \]
Integrating this inequality over $X$ produces
\begin{equation} \label{eq:Riesz3}
A \sum_{\varphi \in \A} \Norm{ \sum_{i\in I} c_{i,\varphi} g_i }_{L^2(X)}^2 \leq \Norm{ \sum_{\varphi \in \A} \sum_{i \in I} c_{i,\varphi} g_i \varphi }_{L^2(X;\H)}^2 \leq B \sum_{\varphi \in \A} \Norm{ \sum_{i\in I} c_{i,\varphi} g_i }_{L^2(X)}^2.
\end{equation}
Meanwhile, for any $\varphi \in \A$ we apply the Riesz condition in $L^2(X)$ to deduce
\[ a \sum_{i\in I} |c_{i,\varphi}|^2 \leq \Norm{ \sum_{i\in I} c_{i,\varphi} g_i }_{L^2(X)}^2 \leq b \sum_{i\in I} |c_{i,\varphi}|^2. \]
Adding over all $\varphi \in \A$ and combining with \eqref{eq:Riesz3} gives
\[ aA \sum_{\varphi \in \A} \sum_{i\in I} |c_{i,\varphi}|^2 \leq \Norm{ \sum_{\varphi \in \A} \sum_{i \in I} c_{i,\varphi} g_i \varphi }_{L^2(X;\H)}^2 \leq bB \sum_{\varphi \in \A} \sum_{i\in I} |c_{i,\varphi}|^2. \]
In other words, $\{ g_i \varphi : i \in I, \varphi \in \A\}$ is a Riesz sequence with bounds $aA,bB$.

(ii) $\implies$ (i). This is immediate.

(i) $\implies$ (iii).
Suppose $L^2(X)$ has an orthonormal basis $\D$ for which $\{g \varphi : g \in \D, \varphi \in \A\}$ is a Riesz sequence with bounds $A,B$. First, let $(p_\varphi)_{\varphi \in \A} \subset L^2(X)$ be a family in the finite linear span of $\D$, with only finitely many $p_\varphi \neq 0$. Writing
\[ p_\varphi(x) = \sum_{g\in \D} c_{g,\varphi} g(x) \]
for appropriate constants $c_{g,\varphi}$, we have (by definition)
\[ \int_X \Norm{ \sum_{\varphi \in \A} p_{\varphi}(x) \varphi(x) }^2 d\mu(x) = \Norm{ \sum_{\varphi \in \A} \sum_{g\in \D} c_{g,\varphi} g \varphi }^2 \]
and (by Parseval's identity)
\[ \sum_{\varphi \in \A} \int_X | p_{\varphi}(x) |^2\, d\mu(x) = \sum_{\varphi \in \A} \sum_{g \in \D} |c_{g,\varphi} |^2. \]
Since $\{g \varphi : g \in \D, \varphi \in \A\}$ is a Riesz sequence, and since only finitely many $c_{g,\varphi} \neq 0$,
\begin{equation}\label{eq:Riesz5}
A \sum_{\varphi \in \A} \int_X | p_\varphi(x) |^2\, d\mu(x) \leq \int_X \Norm{ \sum_{\varphi \in \A} p_\varphi(x) \varphi(x) }^2 d\mu(x) \leq B \sum_{\varphi \in \A} \int_X | p_\varphi(x) |^2\, d\mu(x),
\end{equation}

Now let $(f_\varphi)_{\varphi \in \A}\subset L^2(X)$ be a family of functions as in (iii). For each $\varphi \in \A$, there is a sequence $\{p_{\varphi,k}\}_{k=1}^\infty$ of functions in the finite linear span of $\D$ such that $p_{\varphi,k} \to f_\varphi$ in $L^2(X)$. By passing to a subsequence if necessary, we may assume that $p_{\varphi,k}(x) \to f_{\varphi}(x)$ almost everywhere on $X$. Moreover, we can assume that $p_{\varphi,k}=0$ when $f_\varphi = 0$, so that for each $k$, only finitely many $p_{\varphi,k}\neq 0$. By Fatou's Lemma and \eqref{eq:Riesz5},
\[ \int_X \Norm{ \sum_{\varphi \in \A} f_\varphi(x) \varphi(x) }^2 d\mu(x) \leq \liminf_{k\to \infty} \int_X \Norm{ \sum_{\varphi\in \A} p_{\varphi,k}(x) \varphi(x) }^2 d\mu(x) \leq \liminf_{k\to \infty}\, B \sum_{\varphi \in \A} \int_X | p_{\varphi,k}(x) |^2\, d\mu(x) \]
\[ = B \sum_{\varphi \in \A} \int_X |f_\varphi(x)|^2\, d\mu(x). \]
In other words, the upper bound holds in \eqref{eq:Riesz1}. 

It remains to prove the lower bound. To do this, we will upgrade the first inequality above to an equality. Previously, we showed that the upper bound in (iii) implies the upper bound in (v). Thus, for any sequence $(c_\varphi)_{\varphi \in \A} \in l^2(\A)$ having only finitely many nonzero entries,
\[ \Norm{ \sum_{\varphi \in \A} c_\varphi \varphi(x) }^2 \leq B \sum_{\varphi \in \A} |c_\varphi|^2 \quad \text{for a.e. }x\in X. \]
In particular, $\Norm{\varphi(x)}^2 \leq B$ for all $\varphi \in \A$ and a.e.\ $x\in X$. Therefore,
\[ \int_X \Norm{f_\varphi(x) \varphi(x) - p_{\varphi,k}(x) \varphi(x)}^2 d\mu(x) = \int_X | f_\varphi(x) - p_{\varphi,k}(x)|^2 \Norm{\varphi(x)}^2 d\mu(x) \]
\[ \leq B \int_X | f_\varphi(x) - p_{\varphi,k}(x) |^2\, d\mu(x). \]
Since $p_{\varphi,k} \to f_\varphi$ in $L^2(X)$, $p_{\varphi,k} \varphi \to f_\varphi \varphi$ in $L^2(X;\H)$. In particular,
\[ \int_X \Norm{ p_{\varphi,k}(x) \varphi(x) }^2 d\mu(x) \to \int_X \Norm{ f_\varphi(x) \varphi(x) }^2 d\mu(x). \]
Now \eqref{eq:Riesz1} follows from \eqref{eq:Riesz5}.

(iii) $\iff$ (iv).
Obviously (iv) implies (iii). Suppose conversely that (iii) holds. Without loss of generality, we may assume that $\A$ is infinite, and then we can enumerate $\A = \{\varphi_k\}_{k=1}^\infty$.  Let $\{f_k\}_{k=1}^\infty \subset L^2(X)$ be a sequence of functions such that $\sum_{k=1}^\infty \Norm{f_k}_{L^2(X)}^2 < \infty$. By Tonelli's Theorem,
\[ \int_X \sum_{k=1}^\infty | f_k(x) |^2\, d\mu(x) = \sum_{k=1}^\infty \int_X | f_k(x) |^2\, d\mu(x) < \infty, \]
so $(f_k(x))_{k=1}^\infty \in l^2(\N)$ for a.e.\ $x\in X$. We have shown that (iii) implies (v). Thus $(\varphi_k(x))_{k=1}^\infty$ is a Riesz sequence for a.e.\ $x\in X$. Applying the synthesis operator, we find the sum $\sum_{k=1}^\infty f_k(x) \varphi_k(x)$ converges unconditionally for a.e.\ $x\in X$.

For each $n\in \N$, (iii) gives
\begin{equation}\label{eq:Riesz2}
A \sum_{k=1}^n \int_X |f_k(x)|^2\, d\mu(x) \leq \int_X \Norm{ \sum_{k=1}^n f_k(x) \varphi_k(x) }^2 d\mu(x) \leq B \sum_{k=1}^n \int_X |f_k(x)|^2.
\end{equation}
Moreover, Fatou's Lemma and another application of (iii) show that
\[ \int_X \Norm{ \sum_{k=1}^\infty f_k(x) \varphi_k(x) - \sum_{k=1}^n f_k(x) \varphi_k(x) }^2 d\mu(x) = \int_X \lim_{N\to \infty} \Norm{ \sum_{k=n+1}^N f_k(x) \varphi_k(x) }^2 d\mu(x) \]
\[ \leq \liminf_{N\to \infty} \int_X \Norm{ \sum_{k=n+1}^N f_k(x) \varphi_k(x) }^2 d\mu(x) \leq \liminf_{N\to \infty}\, B \sum_{k=n+1}^N \int_X |f_k(x)|^2\, d\mu(x) = B \sum_{k=n+1}^\infty \Norm{f_k}_{L^2(X)}^2. \]
Thus $\sum_{k=1}^n f_k \varphi_k \to \sum_{k=1}^\infty f_k \varphi_k$ in $L^2(X;\H)$-norm. Completeness shows that $\sum_{k=1}^\infty f_k \varphi_k \in L^2(X;\H)$, and continuity of the norm gives
\[ \lim_{n\to \infty} \int_X \Norm{ \sum_{k=1}^n f_k(x) \varphi_k(x) }^2 d\mu(x) = \int_X \Norm{ \sum_{k=1}^\infty f_k(x) \varphi_k(x) }^2 d\mu(x). \]
Sending $n\to \infty$ in \eqref{eq:Riesz2} establishes (iv). \hfill \qed

\smallskip

\begin{rem} \label{rem:Riesz}
The theorem above holds when $\mu(X) = \infty$, but only vacuously. Indeed, the hypothesis that $\mu(X) < \infty$ was never used. However, if $\A \subset L^2(X;\H)$ is any countable family satisfying (v), then $\Norm{\varphi(x)}^2 \geq A$ for every $\varphi \in \A$ and a.e.\ $x\in X$. Hence,
\[ \int_X \Norm{\varphi(x)}^2 d\mu(x) \geq A\, \mu(X) = \infty \]
for each $\varphi \in \A$, so that $\A = \emptyset$.
\end{rem}

\smallskip

%frames
\subsection{Continuous frames}

\begin{defn}
Let $\H$ be a Hilbert space, and let $(\mathcal{M},\mu_\mathcal{M})$ be a measure space. A family of vectors $(u_t)_{t\in \mathcal{M}} \subset \H$ is called a \emph{continuous frame} over $\mathcal{M}$  for $\H$ if both of the following hold:
\begin{enumerate}[(i)]
\item For each $v\in \H$, the function $t \mapsto \langle v, u_t \rangle$ is measurable $\mathcal{M} \to \C$.
\item There are constants $0 < A \leq B < \infty$, called \emph{frame bounds}, such that for each $v \in \H$,
\[ A \Norm{v}^2 \leq \int_\mathcal{M} \left| \langle v, u_t \rangle \right|^2\, d\mu_\mathcal{M}(t) \leq B \Norm{v}^2. \]
\end{enumerate}
When $A=B$, the frame is called \emph{tight}, and when $A=B=1$, it is a \emph{continuous Parseval frame}.
\end{defn}

In practice, it is enough to check condition (ii) for $v$ in a dense subset of $\H$. See \cite[Proposition 2.5]{RND}. Continuous frames were introduced independently by Kaiser \cite{Ka} and Ali, Antoine, and Gazou \cite{AAG}. When $\mathcal{M}$ is a countable set and $\mu_\mathcal{M}$ is counting measure, continuous frames reduce to the usual discrete version.

\begin{defn}
Let $(X,\mu_X)$ be a measure space. A \emph{Parseval determining set} for $L^1(X)$ consists of another measure space $(\mathcal{M},\mu_\mathcal{M})$ and a family of functions $(g_t)_{t\in \mathcal{M}} \subset L^\infty(X)$ such that for each $f\in L^1(X)$, the mapping
\[ t \mapsto \int_X f(x) \overline{g_t(x)}\, d\mu_X(x) \]
is measurable on $\mathcal{M}$, and
\begin{equation}\label{eq:ParDet}
\int_\mathcal{M} \left| \int_X f(x) \overline{g_t(x)}\, d\mu_X(x) \right|^2 d\mu_\mathcal{M}(t) = \int_X | f(x) |^2\, d\mu_X(x).
\end{equation}
We allow that both sides may be infinite.
\end{defn}

This definition axiomatizes Lemma 3.5 of \cite{BR}. If $(g_t)_{t\in \mathcal{M}}$ is a Parseval determining set for $L^1(X)$, then so is $( \overline{g_t})_{t\in \mathcal{M}}$, by taking complex conjugates in the integrands above. It follows easily that a Parseval determining set  is a determining set in the sense of Definition \ref{def:DetRan}.

In the sections that follow, our primary example of a Parseval determining set will be the characters of an LCA group; see Lemma \ref{lem:CharParDet} infra. For another example, suppose that $X$ is equipped with counting measure. If $\mathcal{M}$ is a countable set with counting measure, then a family $(g_t)_{t\in \mathcal{M}} \subset l^2(X) \subset l^\infty(X)$ is a Parseval determining set for $L^1(X)$ if and only if
\[ \sum_{t\in \mathcal{M}} \left| \sum_{x\in X} f(x) \overline{g_t(x)} \right|^2 = \sum_{x\in X} |f(x)|^2 \quad \text{for all }f\in l^1(X) \subset l^2(X), \]
if and only if $(g_t)_{t\in \mathcal{M}}$ is a discrete Parseval frame for $l^2(X)$.

If we relax our conditions and allow $(X,\mu)$ to be an arbitrary $\sigma$-finite measure space, then any Parseval determining set $(g_t)_{t\in \mathcal{M}} \subset L^\infty(X) \cap L^2(X)$ satisfies the Parseval condition on the dense subspace $L^1(X) \cap L^2(X) \subset L^2(X)$, so it is a Parseval frame for $L^2(X)$. However, not every Parseval frame for $L^2(X)$ consisting of functions in $L^\infty(X)$ is a Parseval determining set for $L^1(X)$.\footnote{The author thanks Prof.\ Alexander Olevskii for his help answering this question.} Indeed, for any $f \in L^1([0,1]) \setminus L^2([0,1])$ there is an orthonormal basis $(g_n)_{n=1}^\infty \subset C([0,1])$ for $L^2([0,1])$ such that $\int f \overline{g_n} = 0$ for every $n$; see \cite[Satz 613]{KS}. For these functions, the left hand side of \eqref{eq:ParDet} is infinite, but the right hand side is zero.

\begin{defn}
Let $(X,\mu_X)$ and $(\mathcal{M},\mu_\mathcal{M})$ be measure spaces, and let $\H$ be a separable Hilbert space. We say a family $(\varphi_t)_{t\in \mathcal{M}} \subset L^2(X;\H)$ is \emph{jointly measurable} if there is a function $\Phi \colon \mathcal{M} \times X \to \H$ satisfying the conditions:
\begin{enumerate}[(i)]
\item For a.e. $t\in \mathcal{M}$, $\Phi(t,\cdot) = \varphi_t$ a.e.\ on $X$.
\item For any $u\in \H$, the function $(t,x) \mapsto \langle \Phi(t,x), u \rangle$ is measurable on $\mathcal{M} \times X$.
\end{enumerate}
\end{defn}

By Pettis's Measurability Theorem \cite[Theorem 1.1]{P}, condition (ii) says precisely that $\Phi \colon \mathcal{M} \times X \to \H$ is measurable with respect to the Borel $\sigma$-algebra on $\H$. In the case where $\H = L^2(Y)$, this is equivalent to another kind of pointwise measurability property, which we describe in Corollary \ref{cor:JointMeas} below.

Intuitively, joint measurability means that the function $(t,x) \mapsto \varphi_t(x)$ is measurable on $\mathcal{M} \times X$. However, this notion may depend on the choice of representative functions $\varphi_t \colon X \to \H$. In the sequel, we will often ignore this subtlety and integrate expressions involving $\varphi_t(x)$ over $\mathcal{M} \times X$. When this happens, it is to be assumed that we have fixed a measurable function $\Phi \colon \mathcal{M} \times X \to \H$ as above.

We expect that the next proposition is already known. However, we have not been able to locate a reference. Therefore, we supply a proof.

\begin{prop} \label{prop:JointMeas}
Let $(\mathcal{M},\mu_{\mathcal{M}})$ and $(Y,\mu_Y)$ be $\sigma$-finite measure spaces, with $(\mathcal{M},\mu_{\mathcal{M}})$ complete. For a family $(f_t)_{t\in \mathcal{M}} \subset L^2(Y)$, the following are equivalent:
\begin{enumerate}[(i)]
\item There is a measurable function $F \colon \mathcal{M} \times Y \to \C$ such that, for a.e.\ $t\in \mathcal{M}$, $F(t,\cdot) = f_t$ a.e.\ on $Y$.
\item For each $g\in L^2(Y)$, the function $t \mapsto \langle f_t, g \rangle$ is measurable on $\mathcal{M}$.
\end{enumerate}
\end{prop}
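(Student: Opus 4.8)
My plan is to treat the two implications separately, since $(i)\Rightarrow(ii)$ is a routine Fubini--Tonelli argument while $(ii)\Rightarrow(i)$ is the real content.

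For $(i)\Rightarrow(ii)$, I would fix the jointly measurable $F$ and a test function $g\in L^2(Y)$. The map $(t,y)\mapsto F(t,y)\overline{g(y)}$ is measurable on $\mathcal{M}\times Y$, and for a.e.\ $t$ it lies in $L^1(Y)$ by Cauchy--Schwarz, since $F(t,\cdot)=f_t\in L^2(Y)$ and $g\in L^2(Y)$. Applying the Tonelli half of Fubini--Tonelli to the four nonnegative pieces of the integrand then shows that $t\mapsto \int_Y F(t,y)\overline{g(y)}\,d\mu_Y(y)$ is measurable wherever defined, and this integral equals $\langle f_t,g\rangle$ for a.e.\ $t$. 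Completeness of $\mathcal{M}$ lets me adjust on a null set to conclude that $t\mapsto\langle f_t,g\rangle$ is measurable.

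For $(ii)\Rightarrow(i)$ I would first reduce to finite measures: writing $\mathcal{M}=\bigsqcup_n \mathcal{M}_n$ and $Y=\bigsqcup_m Y_m$ into finite-measure pieces by $\sigma$-finiteness, a jointly measurable representative built on each block $\mathcal{M}_n\times Y_m$ assembles (via indicator functions) into one on $\mathcal{M}\times Y$, so I may assume $\mu_{\mathcal{M}}(\mathcal{M})<\infty$ and $\mu_Y(Y)<\infty$. In the principal case, where $L^2(Y)$ is separable, I fix an orthonormal basis $(e_k)$ with measurable representatives $e_k\colon Y\to\C$. Then $c_k(t)=\langle f_t,e_k\rangle$ is measurable by $(ii)$, the partial sums $F_N(t,y)=\sum_{k\le N}c_k(t)\,e_k(y)$ are jointly measurable, and $F_N(t,\cdot)\to f_t$ in $L^2(Y)$ for every $t$. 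The technically pleasant step is to upgrade this fibrewise $L^2$-convergence to convergence in product measure: for each $t$, Chebyshev gives $\mu_Y\{y:|F_N(t,y)-F_M(t,y)|>\varepsilon\}\le \varepsilon^{-2}\Norm{F_N(t,\cdot)-F_M(t,\cdot)}_{L^2(Y)}^2\to0$, and since this quantity is bounded by $\mu_Y(Y)<\infty$, dominated convergence over the finite space $\mathcal{M}$ shows $(F_N)$ is Cauchy in $\mu_{\mathcal{M}}\times\mu_Y$-measure. Hence $F_N\to F$ in measure for some measurable $F$, and a subsequence converges a.e.\ on $\mathcal{M}\times Y$. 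For a.e.\ $t$ this subsequence converges to $F(t,\cdot)$ a.e.\ on $Y$ and to $f_t$ in $L^2(Y)$; comparing with a further a.e.-convergent subsequence of the latter forces $F(t,\cdot)=f_t$ a.e., which is $(i)$. Completeness of $\mathcal{M}$ is again what lets me regard the a.e.-defined $F$ as measurable on all of $\mathcal{M}\times Y$.

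The step I expect to be the main obstacle is removing the separability assumption on $L^2(Y)$, since the construction above genuinely uses a countable orthonormal basis. Scalar measurability as in $(ii)$ does not by itself confine the range $\{f_t\}$ to a separable subspace, and Pettis's theorem \cite{P} only yields strong measurability once an essentially separable range is already known, so it cannot be invoked to supply one. My intended remedy is to reduce $Y$ to a countably generated sub-$\sigma$-algebra carrying (a.e.) all of the $f_t$, which would make the relevant $L^2$ separable and return me to the case above; producing such a reduction from $(ii)$ alone is the delicate point, and it is precisely here that the general $\sigma$-finite statement demands the most care. In the applications driving this paper the relevant $L^2(Y)$ is separable, so this obstruction does not arise there, but a clean treatment of the stated generality should isolate and dispatch exactly this reduction.
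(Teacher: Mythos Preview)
Your argument is correct in the separable case, and both implications are handled soundly. For $(ii)\Rightarrow(i)$ you take a genuinely different route from the paper. The paper invokes Pettis's Measurability Theorem directly to conclude that $t\mapsto f_t$ is strongly measurable $\mathcal{M}\to L^2(Y)$, then replaces $\mu_{\mathcal{M}}$ by an equivalent measure $\tilde{\mu}_{\mathcal{M}}$ under which $\int_{\mathcal{M}}\Norm{f_t}^2\,d\tilde{\mu}_{\mathcal{M}}<\infty$, and finally appeals to the standard identification $L^2(\mathcal{M};L^2(Y))\cong L^2(\mathcal{M}\times Y)$ to extract the jointly measurable representative $F$. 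Your approach is more elementary and constructive: it builds $F$ explicitly as a limit in product measure of the partial orthonormal expansions $F_N$, avoiding both Pettis and the vector-valued $L^2$ isomorphism. The paper's proof is shorter and leans on well-known black boxes; yours makes the mechanism visible and would be easier to adapt if one wanted quantitative control.

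On the separability obstacle you flag: this is not a defect of your proof relative to the paper. The paper's invocation of Pettis (weak measurability implies strong measurability) already presupposes that $L^2(Y)$ is separable, or at least that $\{f_t\}$ is essentially separably valued---exactly the hypothesis you say you cannot supply from $(ii)$ alone---and the identification $L^2(\mathcal{M};L^2(Y))\cong L^2(\mathcal{M}\times Y)$ is likewise customarily stated under separability. So both arguments carry the same implicit hypothesis. Since every application of the proposition in the paper has $L^2(Y)$ separable (the ambient Hilbert space $\H$ is assumed separable throughout), you need not pursue the non-separable reduction; the proposition as actually used is fully covered by your argument.
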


\begin{proof}
First assume that (i) holds. Find a sequence of simple measurable functions $S_n \colon \mathcal{M} \times Y \to \C$ such that $S_n(t,y) \to F(t,y)$ for all $(t,y) \in \mathcal{M}$, with $|S_n(t,y)| \leq |F(t,y)|$. Using the $\sigma$-finite conditions, we may assume that each $S_n$ has support contained in a measurable rectangle with finite measure. For every $g\in L^2(Y)$ and every $n$, H\"older's Inequality yields
\[ \int_\mathcal{M} \int_Y | S_n(t,y) \overline{g(y)}|\, d\mu_Y(y)\, d\mu_\mathcal{M}(t) \leq \int_\mathcal{M} \left( \int_Y | S_n(t,y) |^2\ d\mu_Y(y) \right)^{1/2} \left( \int_Y |g(y)|^2\ d\mu_Y(y) \right)^{1/2}\, d\mu_\mathcal{M}(t) < \infty. \]
Therefore Fubini's Theorem applies to the function $(t, y) \mapsto S_n(t,y) \overline{g(y)}$, and in particular the function
\[ t \mapsto \int_Y S_n(t,y) \overline{g(y)}\, d\mu_Y(y) \]
is well defined a.e.\ and measurable on $\mathcal{M}$. Now the Lebesgue Dominated Convergence Theorem shows that
\[ \langle f_t, g \rangle = \int_Y F(t,y) \overline{g(y)}\, d\mu_Y(y) = \lim_{n\to \infty} \int_Y S_n(t,y) \overline{g(y)}\, d\mu_Y(y) \]
for a.e.\ $t\in \mathcal{M}$. Hence the function $t \mapsto \langle f_t, g \rangle$ is the a.e.\ pointwise limit of measurable functions, and is itself measurable.

Suppose conversely that (ii) holds. By Pettis's Measurability Theorem, the function $t \mapsto f_t$ is measurable $\mathcal{M} \to L^2(Y)$; hence $t \mapsto \Norm{f_t}$ is measurable on $\mathcal{M}$. An easy exercise now shows that the measurable space $\mathcal{M}$ admits another measure $\tilde{\mu}_\mathcal{M}$ for which
\[ \int_\mathcal{M} \Norm{f_t}^2\, d\tilde{\mu}_\mathcal{M}(t) < \infty. \]
Since we are concerned only with measurability, we may replace $\mu_\mathcal{M}$ with $\tilde{\mu}_\mathcal{M}$ and assume that the function $t \mapsto f_t$ belongs to $L^2(\mathcal{M};L^2(Y))$. The usual identification of $L^2(\mathcal{M} \times Y)$ with $L^2(\mathcal{M}; L^2(Y))$ now proves (i). (See for instance \cite[Theorem II.10(c)]{RS2}.)
\end{proof}

\begin{cor} \label{cor:JointMeas}
Let $(X,\mu_X)$, $(Y,\mu_Y)$, and $(\mathcal{M},\mu_\mathcal{M})$ be complete, $\sigma$-finite measure spaces. A family $(\varphi_t)_{t \in \mathcal{M}} \subset L^2(X;L^2(Y))$ is jointly measurable if and only if there is a measurable function $\Phi\colon \mathcal{M} \times X \times Y \to \C$ such that for a.e.\ $t\in \mathcal{M}$, for a.e.\ $x\in X$, $\Phi(t,x,\cdot) = \varphi_t(x)$ a.e.\  on $Y$. Consequently, the notion of ``joint measurability'' remains unchanged when we identify $L^2(X;L^2(Y))$ with $L^2(X\times Y) = L^2(X\times Y; \C)$, or with $L^2(Y;L^2(X))$.
\end{cor}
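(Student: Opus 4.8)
The plan is to apply Proposition~\ref{prop:JointMeas} with its abstract parameter space $\mathcal{M}$ replaced by the product $\mathcal{M}\times X$, its target space $Y$ left unchanged, and the family reindexed by the larger parameter. Recall from the remark following the definition of joint measurability that, by Pettis's Measurability Theorem with $\H = L^2(Y)$, the family $(\varphi_t)_{t\in\mathcal{M}}$ is jointly measurable if and only if there is a map $\Phi\colon \mathcal{M}\times X \to L^2(Y)$ such that (a) for a.e.\ $t\in\mathcal{M}$, $\Phi(t,\cdot) = \varphi_t$ a.e.\ on $X$, and (b) for every $u\in L^2(Y)$ the function $(t,x)\mapsto \langle \Phi(t,x), u\rangle$ is measurable on $\mathcal{M}\times X$. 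Viewing $\Phi$ as a family $(\Phi(t,x))_{(t,x)\in \mathcal{M}\times X}$ of elements of $L^2(Y)$, condition (b) is exactly condition (ii) of Proposition~\ref{prop:JointMeas} for the index space $\mathcal{M}\times X$ and target $Y$, while the sought-after function $\Phi$ on $\mathcal{M}\times X\times Y$ is exactly the object produced by condition (i) of that proposition.

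Before invoking the proposition I would dispose of a technical point: a product of complete measure spaces need not be complete, so I would replace $\mathcal{M}\times X$ by its completion. This is again $\sigma$-finite and is now complete, so the hypotheses of Proposition~\ref{prop:JointMeas} are met. Since measurability with respect to the product $\sigma$-algebra entails measurability with respect to its completion, condition (b) is unaffected by this passage; associativity of products and completions likewise identifies $(\mathcal{M}\times X)\times Y$ with $\mathcal{M}\times X\times Y$, so the function delivered by the proposition is measurable on $\mathcal{M}\times X\times Y$, as required.

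For the forward implication I would feed (b) into Proposition~\ref{prop:JointMeas} to obtain a measurable $\Phi'\colon \mathcal{M}\times X\times Y \to \C$ with $\Phi'(t,x,\cdot) = \Phi(t,x)$ a.e.\ on $Y$ for $(\mu_\mathcal{M}\times\mu_X)$-a.e.\ $(t,x)$. Combining this with (a), and applying Tonelli's theorem to the indicator of the exceptional set to pass from ``$(\mu_\mathcal{M}\times\mu_X)$-a.e.\ $(t,x)$'' to ``for a.e.\ $t$, for a.e.\ $x$,'' yields a representative with $\Phi'(t,x,\cdot) = \varphi_t(x)$ a.e.\ on $Y$, which is the asserted function. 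The converse runs the same equivalence backwards: given such a $\Phi'$, Tonelli first shows $\Phi'(t,x,\cdot)\in L^2(Y)$ for a.e.\ $(t,x)$, so setting $\Phi(t,x) = \Phi'(t,x,\cdot)$ verifies condition (i) of the proposition; this gives (ii), i.e.\ (b), and (a) is read off directly, establishing joint measurability.

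Finally, stability of joint measurability under the stated identifications follows by symmetry. The intermediate characterization---existence of a measurable $\Phi'$ on $\mathcal{M}\times X\times Y$ with $\Phi'(t,x,\cdot)=\varphi_t(x)$ a.e.---is symmetric in $X$ and $Y$ once the order of the two a.e.\ quantifiers is interchanged by Tonelli, and under the canonical identification $\varphi_t(x)(y) = \psi_t(x,y)$ it coincides with joint measurability of the image family in $L^2(X\times Y;\C)$, where $\H=\C$ and Pettis reduces condition (ii) of the definition to plain measurability of $\Phi'$. I expect the completeness/completion bookkeeping and the Tonelli conversion between product-a.e.\ and iterated-a.e.\ quantifiers to be the only points requiring genuine care; everything else is a direct transcription of Proposition~\ref{prop:JointMeas}.
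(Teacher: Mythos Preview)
Your proposal is correct and follows essentially the same approach as the paper: the paper's proof is the one-line instruction ``Apply Proposition~\ref{prop:JointMeas} to the family $(\varphi_t(x))_{(t,x) \in \mathcal{M} \times X} \subset L^2(Y)$,'' with the measure-zero bookkeeping explicitly left to the reader. You have simply carried out that bookkeeping---the passage to the completion of $\mathcal{M}\times X$ and the Tonelli conversion between product-a.e.\ and iterated-a.e.\ quantifiers---exactly as the paper intended.
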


\begin{proof}
Apply Proposition \ref{prop:JointMeas} to the family $(\varphi_t(x))_{(t,x) \in \mathcal{M} \times X} \subset L^2(Y)$. We leave it to the reader to check the details surrounding sets of measure zero.
\end{proof}

The next theorem is an abstract version of \cite[Theorem 2.3(i)]{B}, whose argument we follow. See also \cite[Theorem 4.2]{CP} and \cite[Theorem 5.1]{BR}.

\begin{theorem}\label{thm:frame}
Let $(X,\mu_X)$ and $(\mathcal{M},\mu_\mathcal{M})$ be $\sigma$-finite measure spaces, and let $\D = (g_s)_{s\in \mathcal{M}}$ be a Parseval determining set for $L^1(X)$. Fix a separable Hilbert space $\H$, another $\sigma$-finite measure space $(\mathcal{N},\mu_\mathcal{N})$, and a jointly measurable familiy $\A = (\varphi_t)_{t\in \mathcal{N}} \subset L^2(X;\H)$. Let $\A_0 \subset \A$ be a countable dense subset, and define $J$ as in \eqref{eq:AbstRan1}. For constants $0< A \leq B < \infty$, the following are equivalent:

\begin{enumerate}[(i)]
\item $E_\D(\A)$ forms a continuous frame for $S_\D(\A)$ over $\mathcal{M} \times \mathcal{N}$, with bounds $A,B$. That is,
\[ A \int_X \Norm{\psi(x)}^2 d\mu_X(x) \leq \int_\mathcal{N} \int_\mathcal{M} \left| \int_X \langle \psi(x), g_s(x) \varphi_t(x) \rangle\, d\mu_X(x) \right|^2 d\mu_\mathcal{M}(s)\, d\mu_\mathcal{N}(t) \leq B \int_X \Norm{\psi(x)}^2 d\mu_X(x) \]
for all $\psi \in S_\D(\A)$.

\item For a.e.\ $x\in X$ and every $u \in J(x)$,
\[ A \Norm{u}^2 \leq \int_\mathcal{N} | \langle u, \varphi_t(x) \rangle |^2 d\mu_\mathcal{N}(t) \leq B \Norm{u}^2. \]
\end{enumerate}
\end{theorem}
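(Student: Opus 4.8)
The plan is to collapse the triple integral in (i) into a single fiberwise quadratic form and then to compare it with (ii) fiber by fiber. Write $\Theta_x(u) = \int_{\mathcal{N}} |\langle u,\varphi_t(x)\rangle|^2\,d\mu_{\mathcal{N}}(t)$ for $u\in\H$, so that (ii) reads $A\Norm{u}^2 \le \Theta_x(u) \le B\Norm{u}^2$ for a.e.\ $x$ and every $u\in J(x)$. Fix $\psi\in L^2(X;\H)$. For a.e.\ $t$ the function $h_t(x)=\langle\psi(x),\varphi_t(x)\rangle$ lies in $L^1(X)$ by Cauchy--Schwarz, and $\int_X\langle\psi(x),g_s(x)\varphi_t(x)\rangle\,d\mu_X=\int_X h_t(x)\overline{g_s(x)}\,d\mu_X$. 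Applying the Parseval determining set identity \eqref{eq:ParDet} to $h_t$ evaluates the $s$-integral as $\int_X|h_t(x)|^2\,d\mu_X$; integrating over $t$ and exchanging the order of integration by Tonelli then shows that the middle term of (i) equals $\int_X\Theta_x(\psi(x))\,d\mu_X$. Since $S_\D(\A)=M_J$ by Proposition \ref{prop:AbstRan}, condition (i) is equivalent to
\[ A\int_X\Norm{\psi(x)}^2 d\mu_X \le \int_X\Theta_x(\psi(x))\,d\mu_X \le B\int_X\Norm{\psi(x)}^2 d\mu_X \qquad \text{for all }\psi\in M_J, \]
a condition we denote $(\star)$. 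The measurability requirements in the definition of continuous frame follow from joint measurability of $\A$ via Pettis's theorem, as in Proposition \ref{prop:JointMeas}.

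Granting this reduction, the implication (ii) $\Rightarrow$ (i) is immediate: if $\psi\in M_J$ then $\psi(x)\in J(x)$ a.e., so applying the fiberwise bounds at $u=\psi(x)$ and integrating over $X$ gives $(\star)$.

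For (i) $\Rightarrow$ (ii) I would argue by contradiction using a countable dense set. Writing $\A_0=\{\psi_1,\psi_2,\dots\}$, let $D(x)$ be the set of finite combinations $\sum_j q_j\psi_j(x)$ with coefficients $q_j$ in a fixed countable dense $Q\subset\C$; for a.e.\ $x$, $D(x)$ is a countable dense subset of $J(x)$, and for each formal combination $u$, joint measurability makes $x\mapsto\Theta_x(u(x))$ and $x\mapsto\Norm{u(x)}^2$ measurable. The key analytic fact is that $u\mapsto\Theta_x(u)$ is lower semicontinuous, by Fatou's Lemma. Thus if the \emph{upper} bound of (ii) fails on a set of positive measure, then at a.e.\ such $x$ a unit vector $v\in J(x)$ with $\Theta_x(v)>B$ can be approximated by $u(x)\in D(x)$ for which $\Theta_x(u(x))>\beta\Norm{u(x)}^2$, with $\beta$ a fixed rational in $(B,\Theta_x(v))$; hence for some formal $u$ and some rational $\beta>B$ the set $E=\{x:\Theta_x(u(x))>\beta\Norm{u(x)}^2\}$ has positive measure. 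Taking $\psi=u\,\mathbf{1}_E\in M_J$ (which lies in $L^2(X;\H)$ since $u$ does) then violates the upper bound of $(\star)$, a contradiction.

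The main obstacle is the \emph{lower} bound, since lower semicontinuity runs the wrong way: a fiber may fail $\Theta_x(v)\ge A\Norm{v}^2$ at some $v\in J(x)$ while every nearby element of $D(x)$ has a far larger value of $\Theta_x$, so the failure need not be visible on the countable dense set. The remedy is to exploit the upper bound already extracted. Once we know $\Theta_x(u)\le B\Norm{u}^2$ for all $u\in J(x)$ a.e., the quadratic form $\Theta_x$ is a bounded, hence continuous, nonnegative form on $J(x)$, so a lower-bound failure at $v$ \emph{is} now detected by nearby $u(x)\in D(x)$. Repeating the contradiction argument with a rational $\alpha<A$ and the positive-measure set $\{x:\Theta_x(u(x))<\alpha\Norm{u(x)}^2\}$ then violates the lower bound of $(\star)$. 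This establishes $(\star)\Rightarrow$ (ii), completing the equivalence.
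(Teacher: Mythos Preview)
Your argument is correct and follows essentially the same route as the paper: both collapse the triple integral to $\int_X \Theta_x(\psi(x))\,d\mu_X$ via the Parseval determining set identity, and both prove (i)~$\Rightarrow$~(ii) by testing against a countable family that is fiberwise dense in $J(x)$ and restricting to a bad set of positive measure.

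The only substantive difference is the choice of dense family. The paper fixes a countable dense $\{u_m\}\subset\H$ and uses the projections $P_J(x)u_m$, treating the upper and lower bounds symmetrically by appealing to the standard fact that frame bounds verified on a dense set extend to the whole space. You instead use finite rational combinations of $\A_0$ evaluated at $x$, and you make the extension argument explicit: lower semicontinuity of $\Theta_x$ (Fatou) handles the upper bound, and then boundedness of $\Theta_x$ on $J(x)$ gives continuity, which handles the lower bound. The paper's choice buys a cleaner symmetric treatment and avoids any reference to $\A_0$ in the test functions; your choice buys self-containment, since you do not need the measurability of $x\mapsto P_J(x)$ or an external citation for the density-to-frame step.
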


We are tempted to interpret condition (ii) to mean that the family $\{ (\varphi_t)(x) : t\in \mathcal{N}\}$ forms a continuous frame for $J(x)$ for a.e.\ $x \in X$. However, when $\A$ is uncountable, the vectors $\varphi_t(x)$ need not reside in $J(x)$. A more precise interpretation says that $\{ P_J(x)[ \varphi_t(x)] : t \in \mathcal{N}\}$ forms a continuous frame for $J(x)$ for a.e.\ $x \in X$.

The theorem significantly reduces the problem of determining when $E_\D(\A)$ forms a continuous frame. For instance, when $\A \subset L^2(X;\H)$ is a countable family equipped with counting measure, condition (ii) says that for a.e.\ $x\in X$, $\{ \varphi(x) : \varphi \in \A\}$ forms a discrete frame for $J(x)$. Thus, a continuous problem in $L^2(X; \H)$ reduces to a discrete problem in $\H$. The reduction is even more pronounced when $\A$ consists of a single function $\varphi\in L^2(X;\H)$. In that case, (ii) reduces to
\begin{enumerate}[(i')]
\setcounter{enumi}{1}
\item For a.e.\ $x \in X$, either $\varphi(x) = 0$ or $A \leq \Norm{\varphi(x)}^2 \leq B$.
\end{enumerate}

\smallskip

\noindent \emph{Proof of Theorem \ref{thm:frame}.} 
Joint measurability of $\A$ ensures that the integrals above are well defined; use Tonelli's Theorem for the integral in condition (ii). For each $\psi \in S_\D(\A)$, we compute
\begin{equation}\label{eq:frame}
\int_\mathcal{N} \int_\mathcal{M} \left| \int_X \langle \psi(x), g_s(x) \varphi_t(x) \rangle\, d\mu_X(x) \right|^2 d\mu_\mathcal{M}(s)\, d\mu_\mathcal{N}(t)
\end{equation}
\[ = \int_\mathcal{N} \int_\mathcal{M} \left| \int_X \langle \psi(x), \varphi_t(x) \rangle \overline{g_s(x)} \, d\mu_X(x) \right|^2 d\mu_\mathcal{M}(s) d\mu_\mathcal{N}(t) \]
\[ = \int_\mathcal{N} \int_X | \langle \psi(x), \varphi_t(x) \rangle |^2\, d\mu_X(x)\, d\mu_\mathcal{N}(t) = \int_X \int_\mathcal{N} | \langle \psi(x), \varphi_t(x) \rangle |^2\, d\mu_\mathcal{N}(t)\, d\mu_X(x), \]
since $\D$ is a Parseval determining set for $L^1(X)$.

If (ii) holds, then \eqref{eq:AbstRan2} shows that
\[ A \Norm{ \psi(x) }^2 \leq \int_\mathcal{N} | \langle \psi(x), \varphi_t(x) \rangle |^2\, d\mu_\mathcal{N}(t) \leq B \Norm{\psi(x) }^2 \]
for all $\psi \in S_\D(\A)$ and a.e.\ $x\in X$. Integrating over $X$ and applying \eqref{eq:frame} proves (i).

Suppose conversely that (ii) fails. Fix a countable dense subset $\{u_m\}_{m=1}^\infty \subset \H$. For a.e.\ $x\in X$, it follows that $\{P_J(x) u_m\}_{m=1}^\infty$ is a dense subset of $J(x)$. Given $m,n \in \N$, define
\begin{align*}
E_{m,n} &= \{ x \in X : \int_\mathcal{N} | \langle P_J(x) u_m, \varphi_t(x) \rangle |^2\, d\mu_\mathcal{N}(t) > (B+\frac{1}{n}) \Norm{P_J(x) u_m}^2 \} \\
F_{m,n} &= \{ x \in X : \int_\mathcal{N} | \langle P_J(x) u_m, \varphi_t(x) \rangle |^2\, d\mu_\mathcal{N}(t) < (A-\frac{1}{n}) \Norm{P_J(x) u_m}^2 \},
\end{align*}
each of which is well-defined up to a set of measure zero. For a.e.\ $x \notin \bigcup_{m,n=1}^\infty (E_{m,n} \cup F_{m,n})$, 
\[ A \Norm{P_J(x)u_m}^2 \leq \int_\mathcal{N} | \langle P_J(x) u_m, \varphi_t(x) \rangle |^2\, d\mu_\mathcal{N}(t) \leq B \Norm{P_J(x)u_m}^2 \quad \text{for all }m \in \N, \]
so that $\{ P_J(x) [\varphi_t(x)]\}_{t \in \mathcal{N}}$ forms a frame for $J(x)$ with bounds $A,B$. Therefore at least one set $E_{m,n}$ or $F_{m,n}$ has positive measure. In the first case, fix a Borel set $E\subset E_{m,n}$ with $0 < \mu_X(E) < \infty$, and define $\theta \in L^2(X; \H)$ by
\[ \theta(x) = \mathbf{1}_E(x)\cdot  P_J(x) u_m. \]
Since we used strict inequality in the definition of $E_{m,n}$, $\Norm{\theta(x)} > 0$ on $E$. Moreover, $\theta \in S_\D(\A)$ by \eqref{eq:AbstRan2}, and \eqref{eq:frame} shows that
\[ \int_\mathcal{N} \int_\mathcal{M} \left| \int_X \langle \theta(x), g_s(x) \varphi_t(x) \rangle\, d\mu_X(x) \right|^2 d\mu_\mathcal{M}(s)\, d\mu_\mathcal{N}(t) = \int_X \int_\mathcal{N} | \langle \theta(x), \varphi_t(x) \rangle |^2\, d\mu_\mathcal{N}(t)\, d\mu_X(x) \]
\[ = \int_X \mathbf{1}_E(x)\cdot \int_\mathcal{N} | \langle P_J(x) u_m, \varphi_t(x) \rangle |^2\, d\mu_\mathcal{N}(t)\, d\mu_X(x) \geq \int_X \mathbf{1}_E(x) \cdot (B + \frac{1}{n}) \Norm{ P_J(x) u_m }^2 d\mu_X(x) \]
\[ = (B + \frac{1}{n}) \int_X \Norm{ \theta(x) }^2 d\mu_X(x). \]
Thus (i) fails. A similar argument shows that (i) fails when $\mu_X(F_{m,n}) > 0$. This completes the proof. \hfill \qed

\medskip

%MEASURES==============================================================
\section{A Weil formula for right cosets} \label{sec:meas}

Let $G$ be a second countable locally compact group, and let $\Gamma\subset G$ be a closed subgroup. We emphasize that these groups need not be abelian. Our purpose is to examine the measure-theoretic interplay between $G$, $\Gamma$, and the topological quotients $G/\Gamma$ and $\Gamma\backslash G$; the latter is the space of \emph{right} cosets of $\Gamma$ in $G$. Our main result is the existence of a measure on $\Gamma\backslash G$ for which $G \cong \Gamma \times \Gamma\backslash G$ as measure spaces, and for which the resulting unitary $U \colon L^2(G) \to L^2(\Gamma\times \Gamma\backslash G)$ is well behaved under left translation by $\Gamma$.

There is a positive regular Borel measure $\mu_G$ on $G$, called \emph{(left) Haar measure}, such that
\begin{equation}\label{eq:Haar}
\int_G f(yx)\, d\mu_G(x) = \int_G f(x)\, d\mu_G(x)
\end{equation}
for all $f \in L^1(G,\mu_G)$ and all $y \in G$. This measure is unique up to multiplication by a scalar $c > 0$. Fix a scale once and for all. Equation \eqref{eq:Haar} generally fails if we replace $yx$ with $xy$. However, there is a continuous function $\Delta_G \colon G \to (0,\infty)$, called the \emph{modular function}, such that
\begin{equation}\label{eq:ModularR}
\int_G f(xy)\, d\mu_G(x) = \Delta_G(y^{-1}) \int_G f(x)\, d\mu_G(x)
\end{equation}
and
\begin{equation}\label{eq:ModularI}
\int_G f(x^{-1})\, d\mu_G(x) = \int_G f(x) \Delta_G(x^{-1})\, d\mu_G(x)
\end{equation}
for all $f\in L^1(G,\mu_G)$ and $y\in G$. When $\Delta_G \equiv 1$, $G$ is called \emph{unimodular}. The modular function is a homomorphism with respect to multiplication on $(0,\infty)$, and it is independent of the choice of Haar measure on $G$. The subgroup $\Gamma$ also has a modular function $\Delta_\Gamma$ and a left Haar measure $\mu_\Gamma$, whose scale we also fix.

A \emph{rho function} for the pair $(G,\Gamma)$ is a continuous map $\rho \colon G \to (0,\infty)$ with the property that
\[ \rho(x\xi) = \rho(x) \frac{\Delta_\Gamma(\xi)}{\Delta_G(\xi)} \]
for all $x\in G$ and all $\xi \in \Gamma$. Such a function always exists; fix a choice once and for all, taking $\rho = 1$ if possible. There is a unique positive regular Borel measure $\mu_{G/\Gamma}$ on $G/\Gamma$ such that
\begin{equation}\label{eq:RhoMeas}
\int_G f(x)\rho(x) \, d\mu_G(x) = \int_{G/\Gamma} \int_\Gamma f(x\xi)\, d\mu_\Gamma(\xi)\, d\mu_{G/\Gamma}(x\Gamma)
\end{equation}
for all $f \in L^1(G)$. See Folland \cite[Section 2.6]{F} and Reiter and Stegeman \cite[Section 8.2]{RS}. In particular, the inner integral does not depend on the choice of coset representative for $x\Gamma$, the mapping $\xi \mapsto f(x\xi)$ belongs to $L^1(\Gamma)$ for $\mu_{G/\Gamma}$-a.e.\ $x\Gamma \in G/\Gamma$, and the function $x\Gamma \mapsto \int_\Gamma f(x\xi)\, d\mu_\Gamma(\xi)$ is measurable on $G/\Gamma$. The associated measure $\mu_{G/\Gamma}$ is strongly quasi-invariant under the action of $G$, in the sense that
\begin{equation}\label{eq:QuasInv}
\int_{G/\Gamma} f(y^{-1}x\Gamma)\, d\mu_{G/\Gamma}(x\Gamma) = \int_{G/\Gamma} f(x\Gamma) \frac{\rho(yx)}{\rho(x)}\, d\mu_{G/\Gamma}(x\Gamma)
\end{equation}
for all $f\in C_c(G/\Gamma)$ and all $y\in G$. In particular, $\mu_{G/\Gamma}$ is invariant under the action of $G$ if and only if $\rho = 1$; this can happen if and only if $\Delta_\Gamma = \left. \Delta_G \right|_\Gamma$. In that case, \eqref{eq:RhoMeas} becomes \emph{Weil's formula},
\begin{equation}\label{eq:lWeil}
\int_G f(x)\, d\mu_G(x) = \int_{G/\Gamma} \int_\Gamma f(x\xi)\, d\mu_\Gamma(\xi)\, d\mu_{G/\Gamma}(x\Gamma) \quad \text{for all $f\in L^1(G)$.}
\end{equation}
For instance, when $\Gamma$ is normal in $G$, any choice of left Haar measure on $G/\Gamma$ is invariant under the action of $G$. Therefore we can take $\rho = 1$, and by \eqref{eq:QuasInv}, $\mu_{G/\Gamma}$ is the unique left Haar measure on $G/\Gamma$ that satisfies \eqref{eq:lWeil}.

By a well-known result of Feldman and Greenleaf \cite{FG}, there is a Borel measurable function $\tau \colon G/\Gamma \to G$ with the property that $q\circ \tau = id_{G/\Gamma}$, where $q$ is the quotient mapping onto $G/\Gamma$. In effect, $\tau$ chooses a representative for each coset of $\Gamma$ in $G$, and it does it in a measurable way. We call such a function a \emph{Borel section} for $G/\Gamma$. To describe $\tau$, it suffices to give its \emph{fundamental domain} $\tau(G/\Gamma)$, since $\tau(x\Gamma)$ is the unique element of $\tau(G/\Gamma) \cap x\Gamma$. Moreover, $\tau$ is a Borel measurable function if and only if its fundamental domain is a Borel subset of $G$. As remarked in \cite{FG}, $\tau$ can be chosen such that, whenever $K\subset G/\Gamma$ is compact, $\tau(K)$ has compact closure in $G$. Fix a section $\tau$ with this property once and for all, and let $T\colon \Gamma\times G/\Gamma \to G$ be the associated bijection 
\[ T(\xi, x\Gamma) = \tau(x\Gamma)\xi. \]

\begin{prop}\label{prop:MeasIsom}
The function $T$ described above is an isomorphism of measure spaces
\[ T \colon \left(\Gamma \times G/\Gamma,\,d\mu_\Gamma \otimes d\mu_{G/\Gamma} \right) \to \left(G,\, \rho\,  d\mu_G\right). \]
\end{prop}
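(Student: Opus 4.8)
The plan is to recognize that ``isomorphism of measure spaces'' here splits into two separate facts: that $T$ is a bijection which is Borel measurable with Borel measurable inverse, and that $T$ transports $\mu_\Gamma\otimes\mu_{G/\Gamma}$ to $\rho\,d\mu_G$. The second fact is the heart of the matter, and formula \eqref{eq:RhoMeas} is tailored to deliver it: I would prove the change-of-variables identity
\[ \int_G f(x)\rho(x)\,d\mu_G(x) = \int_{\Gamma\times G/\Gamma} f\bigl(T(\xi,x\Gamma)\bigr)\,d(\mu_\Gamma\otimes\mu_{G/\Gamma})(\xi,x\Gamma) \]
for every nonnegative Borel function $f$ on $G$. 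Specializing to $f=\mathbf 1_E$ then gives $(\mu_\Gamma\otimes\mu_{G/\Gamma})(T^{-1}(E)) = \int_E \rho\,d\mu_G$ for every Borel set $E\subset G$, which is precisely the pushforward statement $T_*(\mu_\Gamma\otimes\mu_{G/\Gamma}) = \rho\,d\mu_G$.

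First I would dispatch the measurability bookkeeping. Writing $q\colon G\to G/\Gamma$ for the continuous quotient map, the function $T(\xi,x\Gamma)=\tau(x\Gamma)\xi$ is the composite of $(\xi,x\Gamma)\mapsto(\tau(x\Gamma),\xi)$, which is Borel because $\tau$ is a Borel section, with the continuous multiplication $G\times\Gamma\to G$; hence $T$ is Borel. For the inverse, note that $\tau(q(x))$ and $x$ lie in the same coset $q(x)=x\Gamma$, so $\tau(q(x))^{-1}x\in\Gamma$ and
\[ T^{-1}(x) = \bigl(\tau(q(x))^{-1}x,\;q(x)\bigr). \]
Since $q$ is continuous, $\tau$ is Borel, and inversion and multiplication are continuous, $T^{-1}$ is Borel as well, its first coordinate taking values in the closed (hence Borel) subgroup $\Gamma$. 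Thus $T$ is a Borel isomorphism, and it will descend to an isomorphism of the completed measure spaces once the pushforward identity is in hand, since that identity forces $T$ and $T^{-1}$ to carry null sets to null sets.

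For the change-of-variables identity I would start from \eqref{eq:RhoMeas}. The inner integral $\int_\Gamma f(x\xi)\,d\mu_\Gamma(\xi)$ is independent of the representative chosen for the coset $x\Gamma$, so I may evaluate it at the section representative $x=\tau(x\Gamma)$, turning it into $\int_\Gamma f\bigl(T(\xi,x\Gamma)\bigr)\,d\mu_\Gamma(\xi)$; because $T$ is Borel and $f$ is nonnegative Borel, $f\circ T$ is a nonnegative Borel function on $\Gamma\times G/\Gamma$, and Tonelli's Theorem collapses the resulting iterated integral into the single product integral. To cover indicator functions I would first extend \eqref{eq:RhoMeas}, which is stated for $f\in L^1(G)$, to arbitrary nonnegative Borel $f$ with values in $[0,\infty]$; this is the standard form of the quotient integral formula and follows by monotone convergence from an increasing sequence in $C_c(G)^+$ exhausting $f$, using that $G$ is $\sigma$-compact.

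The step I expect to require the most care is not any inequality but the measure-theoretic bookkeeping around the section: justifying that substituting the Borel representative $\tau(x\Gamma)$ into the representative-free inner integral is legitimate (it is, precisely because that inner integral descends to a well-defined measurable function on $G/\Gamma$), that $f\circ T$ is jointly measurable so that Tonelli applies, and that the passage from $L^1$ to all nonnegative Borel $f$ is valid so that indicators of Borel sets of possibly infinite measure are covered. Once these points are settled, combining the bimeasurability of $T$ with the pushforward identity yields the asserted isomorphism of measure spaces.
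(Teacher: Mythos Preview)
Your proof is correct, and the measure-theoretic half matches the paper's: both derive the pushforward identity directly from \eqref{eq:RhoMeas}, with you spelling out the extension to nonnegative Borel $f$ via monotone convergence and Tonelli.

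The bimeasurability half is where you diverge. The paper does not compute $T^{-1}$; instead it argues that $T$ is Borel (in fact via separate measurability in each variable, which suffices on Polish spaces) and then invokes Lemma~\ref{lm:MeasBij}, a Lusin--Suslin type result stating that any Borel bijection between Polish spaces automatically has Borel inverse. Your approach is more elementary: by writing down the explicit formula $T^{-1}(x)=\bigl(\tau(q(x))^{-1}x,\,q(x)\bigr)$ and checking that each piece is Borel or continuous, you bypass the descriptive set theory entirely. What the paper's route buys is robustness---Lemma~\ref{lm:MeasBij} is reused later for the right-coset version $T(\xi,\Gamma x)=\xi\gamma(\Gamma x)$ and for the section $\gamma$ itself---whereas your route buys self-containment, since it avoids a nontrivial black box from \cite{K}.
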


This result was stated in a section of notes by Folland \cite[\S2.7]{F}. However, its proof was only sketched, and an important detail was missing. The full proof relies on the remarkable lemma below. We remind the reader that a separable topological space is called \emph{Polish} if it admits a complete metric. Every second countable locally compact Hausdorff space is Polish, and the product of Polish spaces is Polish; see Kechris \cite[Theorem 5.3, Proposition 3.3]{K}. The lemma below is Theorem 14.12 of \cite{K}.

\begin{lemma}\label{lm:MeasBij}
Let $X$ and $Y$ be Polish spaces. Then every Borel measurable bijection $f\colon X \to Y$ is an isomorphism of Borel spaces. That is, $f^{-1}$ is also Borel measurable.
\end{lemma}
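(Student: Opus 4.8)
The plan is to deduce the statement from the Luzin--Souslin theorem on injective Borel images and to sketch a self-contained proof of that theorem. Since $f$ is a bijection, for every Borel set $E \subseteq X$ we have $(f^{-1})^{-1}(E) = f(E)$; hence $f^{-1}$ is Borel measurable as soon as we know that $f$ carries Borel sets to Borel sets. So the entire content is the assertion that a Borel injection between Polish spaces maps Borel sets to Borel sets. It suffices to prove that $f(X)$ is Borel: given a Borel $E \subseteq X$, one equips $E$ with a finer Polish topology having the same relative Borel sets (a standard fact about Borel subsets of Polish spaces, see Kechris \cite{K}) and applies the $f(X)$-statement to the Polish space $E$ and the Borel injection $f|_E$. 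Moreover, by the change-of-topology theorem \cite{K} we may refine the topology of $X$ to a Polish topology with the same Borel sets for which $f$ becomes \emph{continuous} (make each $f^{-1}(V)$ clopen for $V$ ranging over a countable basis of $Y$). So from now on I assume $f\colon X \to Y$ is a continuous injection, and I fix complete compatible metrics $d$ on $X$ and $d'$ on $Y$.

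Next I would build a Souslin scheme adapted to $f$. Using continuity and second countability, I construct nonempty Borel sets $(X_s)_{s \in \N^{<\N}}$ such that $X_\emptyset = X$, each level $\{X_s : |s| = k\}$ is a Borel partition of $X$ with $X_{s^\frown n} \subseteq X_s$, and both the $d$-diameter of $X_s$ and the $d'$-diameter of $f(X_s)$ are at most $2^{-|s|}$ (the image-diameter bound is arranged by covering $X_s$, around each point, with basic open sets small enough in $X$ that continuity sends them into $d'$-balls of radius $2^{-|s|-1}$, extracting a countable subcover, and disjointifying into Borel pieces, discarding empty ones). Writing $A_s = f(X_s)$, each $A_s$ is analytic (a Borel image of a Borel set), the sets $\{A_{s^\frown n}\}_n$ are \emph{pairwise disjoint} for each fixed $s$ because $f$ is injective and the $X_{s^\frown n}$ are disjoint, and $\operatorname{diam}_{d'}(A_s) \le 2^{-|s|}$.

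Now I invoke the generalized Luzin separation theorem \cite{K}: pairwise disjoint analytic sets can be enclosed in pairwise disjoint Borel sets. Applying it level by level, intersecting with the Borel set chosen at the previous level, and then intersecting each $B_s$ with a closed $d'$-ball of radius $2^{-|s|}$ centred at a point of the nonempty $A_s$, I obtain Borel sets $(B_s)$ with $A_s \subseteq B_s$, $B_{s^\frown n} \subseteq B_s$, the $\{B_{s^\frown n}\}_n$ pairwise disjoint, and $\operatorname{diam}_{d'}(B_s) \le 2^{-|s|+1}$. I then set
\[ B = \bigcap_{k=0}^\infty \bigcup_{|s|=k} B_s, \]
a Borel subset of $Y$, and claim $f(X) = B$. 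The inclusion $f(X) \subseteq B$ is immediate: if $x \in X$ lies in the unique level-$k$ piece $X_{s_k(x)}$, then $f(x) \in A_{s_k(x)} \subseteq B_{s_k(x)}$ for every $k$.

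The crux, and the step I expect to be the main obstacle, is the reverse inclusion $B \subseteq f(X)$, where both hypotheses are used in tandem. Given $y \in B$, at each level $k$ the disjointness of the $B_s$ forces $y$ into exactly one $B_{s_k}$ with $|s_k| = k$, and the nesting forces the $s_k$ to cohere into a single branch. The pieces $X_{s_k}$ are nested, nonempty, with $d$-diameters tending to $0$, so completeness of $d$ yields a unique point $x^\ast \in \bigcap_k \overline{X_{s_k}}$; choosing $z_k \in X_{s_k}$ with $z_k \to x^\ast$ and invoking continuity gives $f(z_k) \to f(x^\ast)$. On the other hand $f(z_k) \in A_{s_k} \subseteq B_{s_k}$ and $y \in B_{s_k}$, while $\operatorname{diam}_{d'}(B_{s_k}) \to 0$, whence $f(z_k) \to y$; therefore $y = f(x^\ast) \in f(X)$. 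This is exactly where the $d'$-diameter control (from continuity) and the disjointness of the scheme (from injectivity, via the separation theorem) are both indispensable; note that the explicit Borel form of $B$ makes Souslin's theorem unnecessary. With $f(X) = B$ Borel established, the reductions of the first paragraph show that $f$ maps every Borel set to a Borel set, so $f^{-1}$ is Borel measurable and $f$ is an isomorphism of Borel spaces.
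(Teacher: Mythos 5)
Your proposal is correct, but it proves considerably more than the paper does, and by a different mechanism: the paper offers no proof of this lemma at all, citing it as Theorem 14.12 of Kechris \cite{K}. The standard proof of that cited theorem leans on bijectivity in an essential, two-line way: for Borel $E \subseteq X$, the sets $f(E)$ and $f(X \setminus E) = Y \setminus f(E)$ are disjoint analytic sets (Borel images of Borel sets under a Borel map) partitioning $Y$, so Luzin's separation theorem --- equivalently Souslin's theorem that bi-analytic sets are Borel --- immediately gives that $f(E)$ is Borel. You instead establish the stronger Luzin--Souslin theorem (Kechris, Theorem 15.1): the image of a Borel set under an \emph{injective} Borel map is Borel, with surjectivity never used. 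Your argument --- change of topology to make $f$ continuous, a Souslin scheme with diameter control in both metrics, the generalized separation theorem to disjointify the analytic images, and the completeness-plus-coherent-branch argument identifying $B = f(X)$ --- is sound and is essentially the textbook proof of that stronger theorem; your observation that the explicit Borel form of $B$ bypasses Souslin's theorem is also accurate. The trade-off: the bijectivity shortcut makes the lemma as stated nearly immediate given separation, while your route costs more machinery but yields the injective-image theorem, which the shortcut cannot reach. One trivial repair is needed: the requirements $\operatorname{diam}_d(X_s) \le 2^{-|s|}$ and $\operatorname{diam}_{d'}(f(X_s)) \le 2^{-|s|}$ are impossible at $s = \emptyset$ when $X$ or $f(X)$ is unbounded, and intersecting $B_\emptyset$ with a closed ball of radius $1$ centred in $A_\emptyset$ could then delete points of $A_\emptyset$, breaking $A_s \subseteq B_s$ at the root; simply impose the diameter bounds only for $|s| \ge 1$ and set $B_\emptyset = Y$, which changes nothing downstream since diameters still shrink to $0$ along every branch.
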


\noindent \emph{Proof of Proposition \ref{prop:MeasIsom}.}
For each $\xi \in \Gamma$, the function $T_\xi \colon G/\Gamma \to G$ given by $T_\xi(x\Gamma) = T(\xi, x\Gamma) = \tau(x\Gamma)\xi$ is Borel measurable; and for each $x\Gamma \in G/\Gamma$, the function $T^{x\Gamma} \colon \Gamma \to G$ given by $T^{x\Gamma}(\xi) = T(\xi, x\Gamma) = \tau(x\Gamma)\xi$ is continuous. It follows that $T$ is Borel measurable (see for instance \cite[Exercise 11.3]{K}). By Lemma \ref{lm:MeasBij} and \eqref{eq:RhoMeas}, $T$ is an isomorphism of measure spaces.
\hfill \qed

\begin{cor}\label{cor:LMeasUn}
There is a unitary $U \colon L^2(G,\mu_G) \to L^2(\Gamma \times G/\Gamma, \mu_\Gamma \otimes \mu_{G/\Gamma})$ such that
\[ Uf(\xi, x\Gamma) = \frac{f(\tau(x\Gamma) \xi)}{\sqrt{\rho(\tau(x\Gamma) \xi)}} \]
for all $f\in L^2(G)$, $\mu_\Gamma$-a.e.\ $\xi \in \Gamma$, and $\mu_{G/\Gamma}$-a.e.\ $x\Gamma \in G/\Gamma$.
\end{cor}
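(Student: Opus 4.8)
The plan is to realize $U$ as a composition of two elementary unitaries, so that the entire content of the corollary reduces to Proposition \ref{prop:MeasIsom}. The first factor absorbs the weight $\rho$ that appears in that proposition, and the second is the pullback along the measure space isomorphism $T$.

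First I would deal with the weight. Since $\rho\colon G \to (0,\infty)$ is continuous and everywhere positive, multiplication by $\rho^{-1/2}$ is unambiguous on $L^2$-classes and defines a map $W\colon L^2(G,\mu_G) \to L^2(G,\rho\, d\mu_G)$ by $Wf = \rho^{-1/2}f$. For any $f$ one has
\[ \int_G |Wf|^2\, \rho\, d\mu_G = \int_G \rho^{-1}|f|^2\, \rho\, d\mu_G = \int_G |f|^2\, d\mu_G, \]
so $W$ is isometric, and $g \mapsto \rho^{1/2}g$ provides a two-sided inverse, making $W$ unitary. Next I would invoke Proposition \ref{prop:MeasIsom}, which states that $T$ is an isomorphism of the measure spaces $(\Gamma\times G/\Gamma,\, \mu_\Gamma\otimes\mu_{G/\Gamma})$ and $(G,\,\rho\, d\mu_G)$; in particular $T$ pushes the product measure forward to $\rho\, d\mu_G$. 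The change-of-variables formula for a pushforward then yields, for every $g \in L^2(G,\rho\, d\mu_G)$,
\[ \int_{\Gamma\times G/\Gamma} |g\circ T|^2\, d(\mu_\Gamma\otimes\mu_{G/\Gamma}) = \int_G |g|^2\, \rho\, d\mu_G, \]
so the pullback $Vg = g\circ T$ is an isometry into $L^2(\Gamma\times G/\Gamma,\, \mu_\Gamma\otimes\mu_{G/\Gamma})$. Because $T$ is a bimeasurable bijection up to null sets, composition with $T^{-1}$ inverts $V$, so $V$ is unitary as well.

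Finally I would set $U = V\circ W$, a composition of unitaries and hence unitary, and read off the formula: for $f\in L^2(G,\mu_G)$,
\[ Uf(\xi,x\Gamma) = (Wf)(T(\xi,x\Gamma)) = \frac{f(\tau(x\Gamma)\xi)}{\sqrt{\rho(\tau(x\Gamma)\xi)}}, \]
as claimed. I do not expect a genuine obstacle here; the work is essentially bookkeeping once Proposition \ref{prop:MeasIsom} is available. The points requiring care are that $W$ and $V$ be well defined on equivalence classes—this uses $\rho>0$ together with the fact that a measure space isomorphism carries null sets to null sets in both directions—and that the right-hand side of \eqref{eq:RhoMeas} be correctly identified with $\int (g\circ T)\, d(\mu_\Gamma\otimes\mu_{G/\Gamma})$, which follows by evaluating the inner integral of \eqref{eq:RhoMeas} at the section representative $\tau(x\Gamma)$ and applying Tonelli's Theorem.
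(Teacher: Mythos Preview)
Your proof is correct and follows essentially the same approach as the paper: factor $U$ as the composition of the weight isometry $f\mapsto f/\sqrt{\rho}$ from $L^2(G,\mu_G)$ to $L^2(G,\rho\,d\mu_G)$ with the pullback $g\mapsto g\circ T$ from $L^2(G,\rho\,d\mu_G)$ to $L^2(\Gamma\times G/\Gamma)$ supplied by Proposition~\ref{prop:MeasIsom}. The paper's proof is terser and swaps the names of the two factors, but the argument is identical.
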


\begin{proof}

Let $V \colon L^2(G,\, d\mu_G) \to L^2(G,\, \rho\, d\mu_G)$ be the isometric isomorphism given by $V(f) = f/\sqrt{\rho}$. Follow $V$ by $W \colon L^2(G,\, \rho\, d\mu_G) \to L^2(\Gamma\times G/\Gamma,\, d\mu_\Gamma \otimes d\mu_{\Gamma\otimes G/\Gamma})$, $W(g) = g \circ T$. The resulting unitary is $U$.
\end{proof}

If we are willing to sacrifice invariance under the action of $G$, we can eliminate the rho function in the results above by replacing left cosets with right. Denote $q_L \colon G \to G/\Gamma$ and $q_R \colon G \to \Gamma\backslash G$ for the respective quotient maps, and $\varepsilon \colon \Gamma\backslash G \to G/\Gamma$ for the homeomorphism $\varepsilon(\Gamma x) = x^{-1}\Gamma$. If $i\colon G \to G$ is the inversion map, one easily checks that $\varepsilon \circ q_R \circ i = q_L$, as shown below.
\[ \xymatrix{
G \ar[d]^{q_L} \ar[r]^i & G \ar[d]^{q_R} \\
G/\Gamma \ar@{..>}@/^1pc/[u]^\tau & \Gamma\backslash G \ar[l]^\varepsilon
}\]
Define $\gamma \colon \Gamma\backslash G \to G$ by $\gamma = i\circ \tau \circ \varepsilon$, that is,
\begin{equation}\label{eq:RSec}
\gamma(\Gamma x) = \left[ \tau(x^{-1} \Gamma) \right]^{-1}.
\end{equation}
We claim that $\gamma$ is a Borel section for $\Gamma\backslash G$. Indeed, $\varepsilon \circ q_R \circ i \circ \tau = q_L \circ \tau = id_{G/\Gamma}$, so $q_R \circ i \circ \tau = \varepsilon^{-1}$. Hence
\[ q_R \circ \gamma = q_R \circ i \circ \tau \circ \varepsilon = \varepsilon^{-1} \circ \varepsilon = id_{\Gamma\backslash G}. \]
The claim follows once we observe that $\gamma$ is formed by composing $\tau$ with homeomorphisms on either side, so it is Borel. Moreover, it inherits the property from $\tau$, that whenever $K\subset \Gamma\backslash G$ is compact, $\gamma(K)$ has compact closure in $G$.

\begin{theorem}
There is a unique positive regular Borel measure $\mu_{\Gamma\backslash G}$ on $\Gamma\backslash G$ such that
\begin{equation}\label{eq:RWeil}
\int_G f(x)\, d\mu_G(x) = \int_{\Gamma\backslash G} \int_\Gamma f\left(\xi \gamma(\Gamma x)\right)\, d\mu_\Gamma(\xi)\, d\mu_{\Gamma\backslash G}(\Gamma x)
\end{equation}
for all $f\in L^1(G)$. In particular, $\xi \mapsto f(\xi \gamma(\Gamma x))$ belongs to $L^1(\Gamma)$ for $\mu_{\Gamma\backslash G}$-a.e.\ $\Gamma x \in \Gamma\backslash G$, and the function $\Gamma x \mapsto \int_\Gamma f(\xi \gamma(\Gamma x))\, d\mu_\Gamma(\xi)$ is measurable on $\Gamma\backslash G$. If $\varepsilon \colon \Gamma \backslash G \to G/\Gamma$ is the homeomorphism $\varepsilon(\Gamma x) = x^{-1}\Gamma$, then 
\begin{equation}\label{eq:RightMeas}
\int_{\Gamma\backslash G} f(\Gamma x)\; d\mu_{\Gamma\backslash G}(\Gamma x) = \int_{G/\Gamma} f(\varepsilon^{-1}(x\Gamma))\, \frac{1}{\rho(\tau(x\Gamma)) \Delta_G(\tau(x\Gamma))} \; d\mu_{G/\Gamma}(x\Gamma)
\end{equation}
for $f\in L^1(\Gamma\backslash G)$.
\end{theorem}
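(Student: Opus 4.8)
The plan is to transport the right-coset statement to the already-established left-coset Weil formula \eqref{eq:RhoMeas} using the inversion map $x\mapsto x^{-1}$ and the homeomorphism $\varepsilon$. I would first \emph{define} $\mu_{\Gamma\backslash G}$ by \eqref{eq:RightMeas}, that is, as the push-forward under $\varepsilon^{-1}\colon G/\Gamma\to\Gamma\backslash G$ of the weighted measure $w\,d\mu_{G/\Gamma}$ with $w(x\Gamma)=\big(\rho(\tau(x\Gamma))\Delta_G(\tau(x\Gamma))\big)^{-1}$, and then prove that this measure satisfies \eqref{eq:RWeil} and is the only Radon measure that does. The whole argument rests on a single pointwise identity comparing the right-coset average of $f$ with the left-coset average of the transformed function $g(y)=f(y^{-1})/(\rho(y)\Delta_G(y))$.

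That identity is the technical heart, and amounts to careful bookkeeping with modular functions. Writing $s=\tau(x\Gamma)$ and recalling from \eqref{eq:RSec} that $\gamma(\varepsilon^{-1}(x\Gamma))=\gamma(\Gamma x^{-1})=\tau(x\Gamma)^{-1}=s^{-1}$, I would start from $\int_\Gamma f(\xi s^{-1})\,d\mu_\Gamma(\xi)$. Applying the inversion formula \eqref{eq:ModularI} on $\Gamma$ (with its own modular function $\Delta_\Gamma$) to $\phi(\xi)=f((s\xi)^{-1})$ rewrites this as $\int_\Gamma f((s\xi)^{-1})\Delta_\Gamma(\xi^{-1})\,d\mu_\Gamma(\xi)$. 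The defining relation $\rho(s\xi)=\rho(s)\Delta_\Gamma(\xi)/\Delta_G(\xi)$ together with $\Delta_G$ being a homomorphism turns $\Delta_\Gamma(\xi^{-1})$ into $\rho(s)\Delta_G(s)/\big(\rho(s\xi)\Delta_G(s\xi)\big)$, and the identity emerges:
\[ \int_\Gamma f(\xi s^{-1})\,d\mu_\Gamma(\xi)=\rho(s)\Delta_G(s)\int_\Gamma g(s\xi)\,d\mu_\Gamma(\xi), \qquad g(y)=\frac{f(y^{-1})}{\rho(y)\Delta_G(y)}. \]
The right-hand integral is exactly the inner average appearing in \eqref{eq:RhoMeas} for $g$, and the $L^1(\Gamma)$-membership and measurability assertions of the theorem are inherited from those in \eqref{eq:RhoMeas} through this identity, since the two sides differ only by the positive finite factor $\rho(s)\Delta_G(s)$.

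With this in hand, existence is a short telescoping computation. Feeding the identity into the definition of $\mu_{\Gamma\backslash G}$, the weight $w=(\rho(s)\Delta_G(s))^{-1}$ cancels the factor $\rho(s)\Delta_G(s)$, so the right-hand side of \eqref{eq:RWeil} becomes $\int_{G/\Gamma}\int_\Gamma g(s\xi)\,d\mu_\Gamma(\xi)\,d\mu_{G/\Gamma}(x\Gamma)$; this is $\int_G g\rho\,d\mu_G$ by \eqref{eq:RhoMeas}, and since $g\rho=f(\cdot^{-1})/\Delta_G$, one more application of \eqref{eq:ModularI} collapses it to $\int_G f\,d\mu_G$. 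That $\mu_{\Gamma\backslash G}$ is a positive regular Borel measure I would get from the emphasized compactness property of the section: $\tau$ carries compact sets to relatively compact ones, so $\rho$ and $\Delta_G$ are bounded above and below on $\overline{\tau(K)}$ for each compact $K\subset G/\Gamma$, making $w$ locally bounded; hence $w\,d\mu_{G/\Gamma}$ is locally finite, automatically regular on the second countable locally compact Hausdorff space $G/\Gamma$, and its push-forward under the homeomorphism $\varepsilon^{-1}$ inherits these properties.

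The main obstacle is uniqueness, where the left--right asymmetry of a possibly non-unimodular $\Gamma$ must be overcome; the device is again the identity above. If $\mu$ is any Radon measure satisfying \eqref{eq:RWeil}, then transporting \eqref{eq:RWeil} to $G/\Gamma$ via $\varepsilon$ and using the identity gives $\int_{G/\Gamma}\rho(s)\Delta_G(s)\,(Pg)(x\Gamma)\,d(\varepsilon_*\mu)(x\Gamma)=\int_G f\,d\mu_G$, where $Pg$ is the left average of $g$. On the other hand the existence computation shows $\int_G f\,d\mu_G=\int_{G/\Gamma}(Pg)\,d\mu_{G/\Gamma}$. Because $y\mapsto y^{-1}$ and multiplication by the positive continuous function $1/(\rho\Delta_G)$ are bijections of $C_c(G)$, the map $f\mapsto g$ is onto $C_c(G)$, and the classical surjectivity of the left averaging $P\colon C_c(G)\to C_c(G/\Gamma)$ then shows that $Pg$ ranges over all of $C_c(G/\Gamma)$. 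Thus the Radon measures $(\rho\circ\tau)(\Delta_G\circ\tau)\,\varepsilon_*\mu$ and $\mu_{G/\Gamma}$ agree against every element of $C_c(G/\Gamma)$, so they coincide; dividing by the positive factor yields $\varepsilon_*\mu=w\,d\mu_{G/\Gamma}$, which is simultaneously the uniqueness claim and formula \eqref{eq:RightMeas}.
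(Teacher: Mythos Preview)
Your argument is correct and follows the same overall plan as the paper: define $\mu_{\Gamma\backslash G}$ via \eqref{eq:RightMeas}, verify \eqref{eq:RWeil} by unwinding through \eqref{eq:ModularI} and \eqref{eq:RhoMeas}, and deduce uniqueness from a periodization-surjectivity statement together with the Riesz representation theorem. Your existence computation is the paper's chain of equalities, merely reorganized around the single identity $\int_\Gamma f(\xi s^{-1})\,d\mu_\Gamma(\xi)=\rho(s)\Delta_G(s)\int_\Gamma g(s\xi)\,d\mu_\Gamma(\xi)$.

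The one genuine difference is in uniqueness. The paper introduces a right-coset periodization operator $\tilde P\colon C_c(G)\to C_c(\Gamma\backslash G)$, $(\tilde P f)(\Gamma x)=\int_\Gamma f(\xi\gamma(\Gamma x))\,d\mu_\Gamma(\xi)$, asserts (leaving the adaptation of Folland's Proposition~(2.48) to the reader) that it is surjective, and then observes that \eqref{eq:RWeil} determines a positive linear functional on $C_c(\Gamma\backslash G)$. You instead push everything through $\varepsilon$ to $G/\Gamma$ and invoke the already-established surjectivity of the \emph{left} periodization $P$; the bijectivity of $f\mapsto g$ on $C_c(G)$ then lets $Pg$ sweep out all of $C_c(G/\Gamma)$. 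Your route is slightly more economical in that it reuses the left-coset machinery rather than re-deriving a right-coset analogue, at the cost of one extra transport step; the paper's route is more direct but asks the reader to redo the surjectivity proof. Both lead to the same Riesz-uniqueness endgame.
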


\begin{proof}
We'll first show that \eqref{eq:RightMeas} defines a measure $\mu_{\Gamma\backslash G}$ on $\Gamma\backslash G$ satisfying \eqref{eq:RWeil}. Recall that the image of a compact set in $G/\Gamma$ has compact closure in $G$. If $f\colon G \to \R$ is any continuous function, it follows that $f \circ \tau\colon G/\Gamma \to \R$ is Borel measurable and bounded on compact subsets. In particular, $x\Gamma\mapsto \frac{1}{\rho (\tau(x\Gamma)) \Delta_G(\tau(x\Gamma))}$ is a locally $\mu_{G/\Gamma}$-integrable function on $G/\Gamma$, and we can use it to define a positive regular Borel measure $d\tilde{\mu}_{G/\Gamma} = \frac{1}{(\rho\cdot \Delta_G)\circ \tau}\, d\mu_{G/\Gamma}$. Since $\varepsilon \colon \Gamma\backslash G \to G/\Gamma$ is a homeomorphism, there is a positive regular Borel measure $\mu_{\Gamma\backslash G}$ on $\Gamma\backslash G$ given by $d\mu_{\Gamma\backslash G}(\Gamma x) = d\tilde{\mu}_{G/\Gamma}(\varepsilon(\Gamma x))$. For a Borel set $E\subset \Gamma\backslash G$, this means that
\[ \int_{\Gamma\backslash G} \mathbf{1}_E(\Gamma x)\, d\mu_{\Gamma\backslash G}(\Gamma x) = \mu_{\Gamma\backslash G}(E) = \tilde{\mu}_{G/\Gamma}(\varepsilon(E)) = \int_{G/\Gamma} \mathbf{1}_{\varepsilon(E)}(x\Gamma) \frac{1}{\rho(\tau(x\Gamma))\Delta_G(\tau(x\Gamma))}\, d\mu_{G/\Gamma}(x\Gamma) \]
\[ = \int_{G/\Gamma} \mathbf{1}_E(\varepsilon^{-1}(x\Gamma)) \frac{1}{\rho(\tau(x\Gamma))\Delta_G(\tau(x\Gamma))}\, d\mu_{G/\Gamma}(x\Gamma). \]
It follows that \eqref{eq:RightMeas} holds for all $f\in L^1(\Gamma\backslash G, \mu_{\Gamma\backslash G})$.

Given $f\in L^1(G)$, use \eqref{eq:RhoMeas} to compute
\begin{align*}
\int_G f(x)\, d\mu_G(x) &= \int_G \frac{f(x^{-1})}{\rho(x)} \Delta_G(x^{-1}) \rho(x)\, d\mu_G(x) = \int_{G/\Gamma} \int_\Gamma \frac{f( (x\xi)^{-1})}{\rho(x\xi)} \Delta_G((x\xi)^{-1})\, d\mu_\Gamma(\xi)\, d\mu_{G/\Gamma}(x\Gamma) \\[7 pt]
&= \int_{G/\Gamma} \int_\Gamma \frac{f(\xi^{-1} \tau(x\Gamma)^{-1})}{\rho(\tau(x\Gamma)\xi)} \Delta_G(\xi^{-1} \tau(x\Gamma)^{-1})\, d\mu_\Gamma(\xi)\, d\mu_{G/\Gamma}(x\Gamma) \\[7 pt]
&= \int_{G/\Gamma} \int_\Gamma \frac{f(\xi^{-1} \gamma(\Gamma x^{-1})) \Delta_G(\xi)}{\rho(\tau(x\Gamma)) \Delta_\Gamma(\xi)} \Delta_G(\xi^{-1}) \Delta_G(\tau(x\Gamma)^{-1})\, d\mu_\Gamma(\xi)\, d\mu_{G/\Gamma}(x\Gamma) \\[7 pt]
&= \int_{G/\Gamma} \int_\Gamma \frac{f\left(\xi^{-1} \gamma(\varepsilon^{-1}(x \Gamma))\right)}{\rho(\tau(x\Gamma))\Delta_G(\tau(x\Gamma))} \Delta_\Gamma(\xi^{-1})\, d\mu_\Gamma(\xi)\, d\mu_{G/\Gamma}(x\Gamma) \\[7 pt]
&= \int_{G/\Gamma} \int_\Gamma f\left( \xi \gamma(\varepsilon^{-1}(x \Gamma))\right)\, d\mu_\Gamma(\xi)\, \frac{1}{\rho(\tau(x\Gamma)) \Delta_G(\tau(x\Gamma))}\, d\mu_{G/\Gamma}(x\Gamma) \\[7 pt]
&= \int_{\Gamma\backslash G} \int_\Gamma f(\xi \gamma(\Gamma x))\, d\mu_\Gamma(\xi)\, d\mu_{\Gamma\backslash G}(\Gamma x),
\end{align*}
where the penultimate equation uses the inversion formula \eqref{eq:ModularI} on $\Gamma$. Therefore $\mu_{\Gamma \backslash G}$ satisfies \eqref{eq:RWeil}.

It remains to prove uniqueness. Recall from Folland \cite[Proposition (2.48)]{F} that the periodization operator $P \colon C_c(G) \to C_c(G/\Gamma)$ given by
\[ (Pf)(x\Gamma) = \int_\Gamma f(\tau(x\Gamma)\xi)\, d\mu_\Gamma(\xi) \]
is surjective, and if $\phi \in C_c(G/\Gamma)$ is nonnegative, we can find nonnegative $f \in C_c(G)$ with $Pf = \phi$. By an argument analogous to the one given in \cite[Proposition (2.48)]{F}, there is a surjective operator $\tilde{P} \colon C_c(G) \to C_c(\Gamma\backslash G)$ given by
\[ (\tilde{P} f)(\Gamma x) = \int_\Gamma f(\xi \gamma(\Gamma x))\, d\mu_\Gamma(\xi), \]
and for $\phi \in C_c(\Gamma\backslash G)$ with $\phi \geq 0$, we can find $f \geq 0$ in $C_c(G)$ with $\tilde{P}f = \phi$; we leave it to the reader to make the necessary adjustments. Moreover, \eqref{eq:RWeil} shows that when $f,g \in C_c(G)$ are functions with $\tilde{P} f = \tilde{P} g$, $\int_G f(x)\, d\mu_G(x) = \int_G g(x)\, d\mu_G(x)$. Therefore $\tilde{P} f \mapsto \int_G f(x)\, d\mu_G(x)$ is a well-defined positive linear functional on $C_c(\Gamma\backslash G)$, and by the uniqueness in the Riesz Representation Theorem, there is only one positive regular Borel measure $\mu_{\Gamma\backslash G}$ satisfying \eqref{eq:RWeil}.
\end{proof}

A word of warning: this measure is \emph{not} usually invariant under the right action of $G$, even when an invariant measure exists, unless $G$ is unimodular. Indeed, a right invariant measure on $\Gamma\backslash G$, suitably normalized, would cause \eqref{eq:RWeil} to hold with \emph{right} Haar measure on $G$ in place of the \emph{left} Haar measure $\mu_G$. A staightforward (but tedious) computation involving \eqref{eq:RightMeas} and \eqref{eq:QuasInv} produces
\[ \int_{\Gamma\backslash G} f(\Gamma xy)\, d\mu_{\Gamma\backslash G}(\Gamma x) = \int_{\Gamma\backslash G} f(\Gamma x) \frac{ \rho\left( \gamma(\Gamma x)^{-1}\right) \Delta_G\left(\gamma(\Gamma x)^{-1} \right) }{ \rho\left( \gamma(\Gamma xy^{-1})^{-1}\right) \Delta_G\left( \gamma(\Gamma xy^{-1})^{-1} \right) } \frac{ \rho\left(yx^{-1}\right) }{ \rho\left(x^{-1}\right) }\, d\mu_{\Gamma\backslash G}(\Gamma x) \]
for all $f\in C_c(\Gamma\backslash G)$ and $y\in G$. 

\smallskip

\begin{rem}\label{rem:FundDom}
When $\Gamma$ is discrete and $\mu_\Gamma$ is counting measure, $\gamma$ identifies $(\Gamma\backslash G, \mu_{\Gamma\backslash G})$ as a measure space with $(\gamma(\Gamma\backslash G), \mu_G)$, but when $\Gamma$ is not discrete, $\mu_G(\gamma(\Gamma\backslash G))=0$. To see this, let $E\subset \gamma(\Gamma\backslash G)$ be a Borel set with $\mu_G(E) < \infty$, and use \eqref{eq:RWeil} to compute
\[ \mu_G(E) = \int_G \mathbf{1}_E(x)\, d\mu_G(x) = \int_{\Gamma\backslash G} \int_\Gamma \mathbf{1}_E(\xi \gamma(\Gamma x))\, d\mu_\Gamma(\xi)\, d\mu_{\Gamma\backslash G}(\Gamma x) \]
\[ = \int_{\Gamma\backslash G} \mu_\Gamma(\{1\})\cdot \mathbf{1}_E(\gamma(\Gamma x))\, d\mu_{\Gamma\backslash G}(\Gamma x) = \mu_\Gamma(\{1\}) \int_{\Gamma\backslash G} \mathbf{1}_{\gamma^{-1}(E)}(\Gamma x)\, d\mu_{\Gamma\backslash G}(\Gamma x) \]
\[ = \mu_\Gamma(\{1\})\cdot \mu_{\Gamma\backslash G}(\gamma^{-1}(E)). \]
Lemma \ref{lm:MeasBij} shows that $\gamma$ preserves the Borel $\sigma$-algebra, and the claim follows.
\end{rem}

\begin{theorem} \label{thm:RMeasIsom}
The mapping
\[ T \colon \left( \Gamma \times \Gamma\backslash G,\, d\mu_\Gamma \otimes d\mu_{\Gamma\backslash G} \right) \to \left( G,\, d\mu_G \right) \]
given by $T(\xi, \Gamma x) = \xi \gamma(\Gamma x)$
is a measure space isomorphism.
\end{theorem}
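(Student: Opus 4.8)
The plan is to mimic the proof of Proposition \ref{prop:MeasIsom}, replacing the section $\tau$ by $\gamma$ and the formula \eqref{eq:RhoMeas} by the right Weil formula \eqref{eq:RWeil}; the key simplification is that \eqref{eq:RWeil} carries no rho-function factor, so $T$ should intertwine the product measure with Haar measure directly, with no rescaling. A measure space isomorphism requires three things: that $T$ be a bijection, that $T$ and $T^{-1}$ both be Borel measurable, and that $T$ push $\mu_\Gamma \otimes \mu_{\Gamma\backslash G}$ forward to $\mu_G$. I would dispatch these in turn.

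First I would verify that $T$ is a bijection using only that $\gamma$ is a Borel section, i.e.\ $q_R \circ \gamma = id_{\Gamma\backslash G}$. For surjectivity, given $g \in G$ set $\Gamma x = \Gamma g$; then $\gamma(\Gamma g)$ lies in the right coset $\Gamma g$, so $\gamma(\Gamma g) = \eta g$ for some $\eta \in \Gamma$, whence $T(\eta^{-1}, \Gamma g) = \eta^{-1}\gamma(\Gamma g) = g$. For injectivity, applying $q_R$ to the identity $\xi_1 \gamma(\Gamma x_1) = \xi_2\gamma(\Gamma x_2)$ kills the leading $\Gamma$-factors (since $\Gamma\xi_i = \Gamma$) and forces $\Gamma x_1 = \Gamma x_2$; then $\gamma(\Gamma x_1) = \gamma(\Gamma x_2)$, and left cancellation gives $\xi_1 = \xi_2$.

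Next I would establish measurability exactly as in Proposition \ref{prop:MeasIsom}. For each fixed $\xi$, the slice $\Gamma x \mapsto \xi\gamma(\Gamma x)$ is Borel, being the composite of the Borel map $\gamma$ with the homeomorphism of left translation by $\xi$; and for each fixed $\Gamma x$, the slice $\xi \mapsto \xi \gamma(\Gamma x)$ is continuous, being right translation by $\gamma(\Gamma x)$. A map that is separately measurable in one coordinate and continuous in the other is jointly Borel (\cite[Exercise 11.3]{K}), so $T$ is Borel. Because $\Gamma$, $\Gamma\backslash G$, and $G$ are second countable locally compact Hausdorff, hence Polish, and products of Polish spaces are Polish, Lemma \ref{lm:MeasBij} applies and shows that the Borel bijection $T$ has Borel inverse.

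Finally, measure preservation is precisely the content of \eqref{eq:RWeil}: applying it to $f = \mathbf{1}_E$ for a Borel set $E \subset G$ of finite Haar measure gives $\mu_G(E) = (\mu_\Gamma \otimes \mu_{\Gamma\backslash G})(T^{-1}(E))$, and a monotone-convergence argument over an exhaustion of $G$ by finite-measure sets (available by $\sigma$-finiteness) extends this to all Borel $E$. Thus $T_*(\mu_\Gamma \otimes \mu_{\Gamma\backslash G}) = \mu_G$, and combined with the previous step $T$ is a measure space isomorphism. I do not anticipate a genuine obstacle: every hard ingredient is already in place, either in the measurability technology behind Proposition \ref{prop:MeasIsom} or in the derivation of \eqref{eq:RWeil}. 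The only points deserving care are the bookkeeping around null sets and the extension of the measure identity from finite-measure to arbitrary Borel sets.
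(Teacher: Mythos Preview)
Your proposal is correct and follows exactly the approach the paper takes: the paper's proof is a one-liner stating that the result follows from Lemma~\ref{lm:MeasBij} and \eqref{eq:RWeil} just as Proposition~\ref{prop:MeasIsom} did, and you have simply spelled out those details. The only difference is that you supply the bijectivity and measure-preservation arguments explicitly, whereas the paper leaves them implicit in the parallel with Proposition~\ref{prop:MeasIsom}.
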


\begin{proof}
It follows from Lemma \ref{lm:MeasBij} and \eqref{eq:RWeil} just as Proposition \ref{prop:MeasIsom} did.
\end{proof}

\begin{cor}\label{cor:RMeasUn}
There is a unitary map $U\colon L^2(G,\mu_G) \to L^2(\Gamma\times \Gamma\backslash G, \mu_\Gamma \otimes \mu_{\Gamma\backslash G} )$ such that
\[ Uf(\xi, \Gamma x) = f(\xi \gamma(\Gamma x) ) \]
for all $f\in L^2(G)$, $\mu_\Gamma$-a.e.\ $\xi \in \Gamma$, and $\mu_{\Gamma\backslash G}$-a.e.\ $\Gamma x \in \Gamma\backslash G$.
\end{cor}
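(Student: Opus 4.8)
The plan is to obtain $U$ as the pullback of functions along the measure space isomorphism $T$ furnished by Theorem \ref{thm:RMeasIsom}. This mirrors the construction in Corollary \ref{cor:LMeasUn}, but it is in fact simpler: there the target of the isomorphism (Proposition \ref{prop:MeasIsom}) was $(G,\rho\,d\mu_G)$, forcing an auxiliary isometry $V(f)=f/\sqrt{\rho}$ to correct for the rho function, whereas here Theorem \ref{thm:RMeasIsom} already identifies $\Gamma\times\Gamma\backslash G$ with $G$ carrying its \emph{unweighted} Haar measure $\mu_G$. Consequently no rho factor intervenes, and the stated formula for $Uf$ has no square root.

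Concretely, I would define $Uf = f\circ T$ for $f\in L^2(G,\mu_G)$, so that
\[ Uf(\xi,\Gamma x) = f\bigl(T(\xi,\Gamma x)\bigr) = f\bigl(\xi\gamma(\Gamma x)\bigr), \]
which is exactly the claimed expression. It then remains to check that $U$ is a well-defined unitary, and this is just the general principle that composition with a measure space isomorphism induces a unitary between the associated $L^2$ spaces. First I would verify the isometry: because $T$ carries $\mu_\Gamma\otimes\mu_{\Gamma\backslash G}$ to $\mu_G$, the change-of-variables identity gives
\[ \int_{\Gamma\times\Gamma\backslash G} |Uf|^2\, d(\mu_\Gamma\otimes\mu_{\Gamma\backslash G}) = \int_G |f|^2\, d\mu_G, \]
so $U$ preserves norms. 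Equivalently, one can read this off directly by applying the Weil formula \eqref{eq:RWeil} to the integrand $|f|^2$. Surjectivity follows since $T^{-1}$ is also Borel measurable by Lemma \ref{lm:MeasBij}, so $g\mapsto g\circ T^{-1}$ supplies a two-sided inverse; hence $U$ is unitary.

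The only genuine subtlety is the routine one concerning null sets: $T$ is a bijection defined everywhere, but identifying $L^2$ classes requires that $T$ and $T^{-1}$ both carry null sets to null sets, which is precisely the content of $T$ being a measure space isomorphism. The formula $Uf(\xi,\Gamma x)=f(\xi\gamma(\Gamma x))$ therefore holds for $\mu_\Gamma$-a.e.\ $\xi\in\Gamma$ and $\mu_{\Gamma\backslash G}$-a.e.\ $\Gamma x\in\Gamma\backslash G$, as stated. I do not expect any real obstacle here, since all of the substantive work has already been absorbed into Theorem \ref{thm:RMeasIsom} and the right-coset Weil formula \eqref{eq:RWeil}.
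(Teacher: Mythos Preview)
Your proposal is correct and matches the paper's (implicit) approach: the paper states Corollary \ref{cor:RMeasUn} without proof, treating it as an immediate consequence of the measure space isomorphism $T$ in Theorem \ref{thm:RMeasIsom}, exactly as you have spelled out. Your observation that no $\sqrt{\rho}$ correction is needed here, in contrast with Corollary \ref{cor:LMeasUn}, is precisely why the paper omits the argument.
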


\begin{rem}\label{rem:NormMeas}
When $\Gamma$ is a \emph{normal} subgroup of $G$, we have defined two measures on the coinciding quotient spaces $G/\Gamma = \Gamma\backslash G$, namely $\mu_{G/\Gamma}$ and $\mu_{\Gamma\backslash G}$. In the most general setting, these measures need not be equal, but they are related in a way that we now describe. For arbitrary $f\in C_c(G/\Gamma)$, we compute
\[ \int_{\Gamma\backslash G} f(\Gamma x)\, d\mu_{\Gamma\backslash G}(\Gamma x) =  \int_{G/\Gamma} f(\Gamma x^{-1}) \Delta_G(\tau(x\Gamma)^{-1}) d\mu_{G/\Gamma}(x\Gamma) \]
\[ = \int_{G/\Gamma} f(\Gamma x^{-1}) \Delta_G(\gamma(\Gamma x^{-1})) d\mu_{G/\Gamma}(x\Gamma). \]
Since $\Gamma$ is normal in $G$, $\mu_{G/\Gamma}$ is a left Haar measure on $G/\Gamma$. Denoting $\Delta_{G/\Gamma}$ for the modular function on $G/\Gamma$, and identifying $\Gamma x^{-1}$ with $x^{-1}\Gamma$, we compute
\[ \int_{\Gamma\backslash G} f(\Gamma x)\, d\mu_{\Gamma\backslash G}(\Gamma x) = \int_{G/\Gamma} f(x\Gamma) \Delta_G(\gamma(\Gamma x)) \Delta_{G/\Gamma}(x^{-1} \Gamma)\, d\mu_{G/\Gamma}(x\Gamma), \]
by \eqref{eq:ModularI}. Thus,
\begin{equation}\label{eq:LRRadNik}
d\mu_{\Gamma\backslash G} = \frac{ \Delta_G\circ \gamma }{ \Delta_{G/\Gamma} }\, d\mu_{G/\Gamma}.
\end{equation} 

There is another way to compute the Radon-Nikodym derivative that is sometimes useful. For each $x\in G$, there is a unique number $\delta(x)>0$ such that
\[ \int_\Gamma f(x \xi x^{-1})\, d\mu_\Gamma(\xi) = \delta(x) \int_\Gamma f(\xi)\, d\mu_\Gamma(\xi) \]
for all $f\in L^1(\Gamma)$. In fact, $\delta(x) = \frac{\Delta_G(x) }{ \Delta_{G/\Gamma}(x\Gamma) }$. (See Nachbin \cite[Chapter II, Propositions 16 and 22]{N}.) Thus we can take any function $f\in L^1(\Gamma)$ with nonzero integral that we like, and compute
\[ \frac{\Delta_G(\gamma(\Gamma x)) }{ \Delta_{G/\Gamma}(x\Gamma) } = \delta(\gamma(x\Gamma)) = \int_\Gamma f\left(\gamma(\Gamma x) \xi \gamma(\Gamma x)^{-1}\right)\, d\mu_\Gamma(\xi) \cdot \left( \int_\Gamma f(\xi)\, d\mu_\Gamma(\xi) \right)^{-1}. \]
For instance, in the case where $\Gamma$ is compact and normal, we can take $f=1$ in the formula above to see that $\frac{\Delta_G \circ \gamma}{\Delta_{G/\Gamma}} = 1$, and therefore $\mu_{\Gamma\backslash G} = \mu_{G/\Gamma}$. Likewise, $\mu_{\Gamma\backslash G} = \mu_{G/\Gamma}$ when $\Gamma$ is a closed subgroup in the center of $G$.
\end{rem}

\begin{example}\label{examp:ax+b}
Let $G$ be the affine group on $\R$ consisting of transformations $x \mapsto ax+b$ with $a>0$. As a topological space, we identify $G$ with $(0,\infty) \times \R$; its group laws are then given by
\[ (a,b) \cdot (c,d) = (ac, b+ ad) \quad \text{and} \quad (a,b)^{-1} = (1/a, -b/a). \]
The modular function is $\Delta_G(a,b) = 1/a$, and a left Haar measure is given by
\[ d\mu_G(a,b) = \frac{da\, db}{a^2}. \]

Let $H$ be the normal subgroup
\[ H = \{ (1,b) \in G : b \in \R\}, \]
and let
\[ K = \{ (a,0) \in G : a > 0 \}. \]
Then $H \cong (\R,+)$, $K \cong (\R_+, \times)$, and $G = H \rtimes K$. In particular, $G/H \cong K$. We choose Borel sections $\tau_H \colon G/H \to G$ and $\tau_K \colon G/K \to G$ given by
\[ \tau_H((a,b)H) = (a,0) \quad \text{and} \quad \tau_K((a,b)K) = (1,b). \]
The associated sections $\gamma_H \colon H\backslash G \to G$ and $\gamma_K \colon K\backslash G \to G$ then have formulae
\[ \gamma_H(H(a,b)) = (a,0) \quad \text{and} \quad  \gamma_K(K(a,b)) = (1,b/a). \]

For Haar measures on $H$ and $K$, we choose $d\mu_H(1,b) = db$ and $d\mu_K(a,0) = da/a$, respectively.  Let us compute the measures on the respective quotients. Since $H$ is a normal subgroup, $\mu_{G/H}$ is the unique choice of left Haar measure on $G/H \cong K \cong (\R_+, \times)$ satisfying \eqref{eq:lWeil}; an easy computation shows that
\[ d\mu_{G/H}((a,b)H) = d\mu_K(a,0) = \frac{da}{a}. \]
Then by \eqref{eq:LRRadNik},
\[ d\mu_{H\backslash G}(H(a,b)) = \frac{\Delta_G(\gamma_H(H(a,b)))}{\Delta_{G/H}((a,b)H)}\, d\mu_{G/H}(a,b) = \Delta_G(a,0)\, \frac{da}{a} = \frac{da}{a^2}. \]

On the other hand, $K$ is not a normal subgroup, and since $\Delta_G(a,0) \neq \Delta_K(a,0)$ there is no invariant measure on $G/K$. However, a rho function is given by $\rho_K(a,b) = a$, and the associated quasi-invariant measure on $G/K \cong \R$ is
\[ d\mu_{G/K}((a,b)K) = db, \]
as the reader can easily verify.

The interested reader can now compute $\mu_{K\backslash G}$ using \eqref{eq:RightMeas}. We proceed straight to the punchline. Since $G/K$ is homeomorphic with $\R$ via $(1,b)K \mapsto b$, and since $(1,b)^{-1} = (1,-b)$, composing with $\varepsilon\colon K\backslash G \to G/K$ on the left and with $b \mapsto -b$ on the right shows that $K\backslash G \cong \R$ via $K(1,b) \mapsto b$. In particular, there is a positive regular Borel measure $\mu_{K\backslash G}$ on $K\backslash G$ given by
\[ d\mu_{K\backslash G}(K(1,b)) = db. \]
For any $f\in C_c(G)$, we compute
\[ \int_{K\backslash G} \int_K f\Bigl((a,0)\cdot\gamma(K(1,b))\Bigr)\, d\mu_K(a,0)\, d\mu_{K\backslash G}(K(1,b)) = \int_{K\backslash G} \int_K f\bigl((a,0)\cdot(1,b)\bigr)\,d\mu_{K}(a,0)\, d\mu_{K\backslash G}( K(1,b) ) \]
\[ = \int_\R \int_{\R_+} f(a,ab)\, \frac{da}{a}\, db = \int_\R \int_{\R_+} f(a,b) \frac{da\,db}{a^2} = \int_G f(a,b)\, d\mu_G(a,b). \]
Therefore $\mu_{K\backslash G}$ satisfies \eqref{eq:RWeil}. In other words, $K(1,b) \mapsto b$ identifies $K\backslash G$ with $\R$ as both a topological space and a measure space.
\end{example}

\medskip

%ZAK TRANSFORM AND FIBERIZATION=========================================
\section{The Zak transform and fiberization}\label{sec:Zak}

Let $G$ be a second countable locally compact group with a closed \emph{abelian} subgroup $H$, with notation as in the last section. In this section we develop a generalized version of the Zak transform for the pair $(G,H)$. 

We begin with a short reminder of terminology on locally compact abelian (LCA) groups. Let $\mathcal{G}$ be an LCA group, and let $\mu_{\mathcal{G}}$ be a Haar measure on $\mathcal{G}$. We denote $\hat{\mathcal{G}}$ for the dual group of $\mathcal{G}$, which consists of continuous homomorphisms $\alpha \colon \mathcal{G} \to \T$ under pointwise multiplication, with the topology of uniform convergence on compact sets. The \emph{Fourier transform} of $f\in L^1(\mathcal{G})$ is the function $\hat{f} \in C_0(\hat{\mathcal{G}})$ given by
\[ \hat{f}(\alpha) = \int_{\mathcal{G}} f(x)\, \alpha(x^{-1})\, d\mu_{\mathcal{G}}(x). \]
For any choice of Haar measure on $\hat{\mathcal{G}}$, the Fourier transform maps $L^1(\mathcal{G}) \cap L^2(\mathcal{G})$ onto a dense subspace of $L^2(\hat{\mathcal{G}})$, and for a unique choice $\mu_{\hat{\mathcal{G}}}$ this map is an isometry. That choice of $\mu_{\hat{\mathcal{G}}}$ is called \emph{dual} to $\mu_{\mathcal{G}}$; it is always the measure we have in mind. With dual Haar measure on $\hat{\mathcal{G}}$, the Fourier transform extends uniquely to a unitary map $\F_{\mathcal{G}} \colon L^2(\mathcal{G}) \to L^2(\hat{\mathcal{G}})$. We also call $\F_{\mathcal{G}}$ the Fourier transform, and we also denote $\hat{f} = \F_{\mathcal{G}} f$ for $f\in L^2(\mathcal{G})$. When $g \in L^2(\hat{\G})$, we denote $\check{g} = \F_{\G}^{-1} g$. For any $f\in L^1(\G) + L^2(\G)$, the Fourier transform satisfies the following intertwining relation:
\[ ( L_y f )^\wedge(\alpha) = \alpha(y^{-1}) \hat{f}(\alpha) \quad \text{for }y \in \mathcal{G},\, \alpha \in \hat{\mathcal{G}}. \]

The dual of $\hat{\G}$ can be identified with $\G$ as follows. Each $x\in \G$ defines a character $X_x$ on $\hat{\G}$ given by $X_x(\alpha) = \alpha(x)$, and the mapping $x \mapsto X_x$ is a topological group isomorphism of $\G$ with $\hat{\hat{\G}}$. This is called Pontryagin Duality. When $\hat{\hat{\G}}$ is identified with $\G$ in this way, $\mu_\G$ gives the measure dual to $\mu_{\hat{\G}}$.

If $\Gamma \subset \G$ is a closed subgroup, its \emph{annihilator} in $\hat{\G}$ is the closed subgroup
\[ \Gamma^* = \{ \kappa \in \hat{\G} : \kappa(\xi) = 1 \text{ for all }\xi \in \Gamma\}. \]
The subgroups $\Gamma\subset \G$, $\Gamma^* \subset \hat{\G}$, and their quotients are all canonically related through duality. First, each $\kappa \in \Gamma^*$ defines a character $\hat{\kappa} \in (\G/\Gamma)^\wedge$ by the formula
\begin{equation}\label{eq:KappaHat}
\hat{\kappa}(x\Gamma) = \kappa(x),
\end{equation}
and the mapping $\kappa \mapsto \hat{\kappa}$ identifies $\Gamma^*$ with $(\G/\Gamma)^\wedge$ as topological groups. Likewise, $\hat{\G}/\Gamma^*$ identifies with $\hat{\Gamma}$ through the mapping $\omega \Gamma^* \mapsto \left. \omega \right|_\Gamma$. Moreover, the dual measures on $\hat{\G}$, $\Gamma^* \cong (\G/\Gamma)^\wedge$, and $\hat{\G}/\Gamma^* \cong \hat{\Gamma}$ satisfy Weil's formula \eqref{eq:lWeil}.

Given $f\colon G \to \C$ and a coset $Hx \in H\backslash G$, we will write $f^{Hx} \colon H \to \C$ for the function
\[ f^{Hx}(\xi) = f(\xi \gamma(Hx)), \]
where $\gamma \colon H\backslash G \to G$ is the Borel section from \eqref{eq:RSec}. Given $\varphi \colon \hat{H} \to L^2(H\backslash G)$, we define functions $\varphi_{Hx} \colon \hat{H} \to \C$ for a.e.\ $Hx \in H\backslash G$ with the formula
\[ \varphi_{Hx}(\alpha) = \varphi(\alpha)(Hx). \]

\smallskip

\begin{theorem}\label{thm:Zak}
There is a unitary transformation $Z \colon L^2(G) \to L^2(\hat{H}; L^2(H\backslash G))$ given by
\begin{equation}\label{eq:Zak}
(Zf)(\alpha)(Hx) = \widehat{f^{Hx}}(\alpha) \qquad \text{for all $f\in L^2(G)$, a.e.\ $\alpha \in \hat{H}$, and a.e.\ $Hx \in H\backslash G$.}
\end{equation}
Its inverse is given by
\begin{equation}\label{eq:ZakInv}
(Z^{-1} \varphi)(\xi \gamma(Hx)) = \widecheck{\varphi_{Hx}}(\xi) \quad \text{for all $\varphi \in L^2(\hat{H}; L^2(H\backslash G))$, a.e.\ $\xi \in H$, and a.e.\ $Hx \in H \backslash G$.}
\end{equation}
When $f\in L^2(G)$ and $\xi \in H$, $Z$ satisfies the relation
\begin{equation}\label{eq:ZakTrans}
(Z L_\xi f)(\alpha) = \alpha(\xi^{-1}) \cdot (Zf)(\alpha)
\end{equation}
for a.e.\ $\alpha \in \hat{H}$.
\end{theorem}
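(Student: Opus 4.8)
The plan is to realize $Z$ as a composition of three unitaries, so that unitarity comes for free and all that remains is to unwind definitions and check the explicit formulas. First I would invoke Corollary \ref{cor:RMeasUn} with $\Gamma = H$ to obtain the unitary $U\colon L^2(G)\to L^2(H\times H\backslash G)$ with $Uf(\xi,Hx) = f(\xi\gamma(Hx)) = f^{Hx}(\xi)$. Identifying $L^2(H\times H\backslash G)\cong L^2(H\backslash G;L^2(H))\cong L^2(H\backslash G)\otimes L^2(H)$ by the standard Fubini isomorphism for $\sigma$-finite product spaces, $U$ exhibits $f$ as the vector-valued function $Hx\mapsto f^{Hx}$; in particular $f^{Hx}\in L^2(H)$ for a.e.\ $Hx$, depending measurably on the coset.

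Next I would apply the Fourier transform of $H$ fiberwise. Since $\F_H\colon L^2(H)\to L^2(\hat H)$ is unitary, its ampliation $I\otimes\F_H$ is a unitary $L^2(H\backslash G)\otimes L^2(H)\to L^2(H\backslash G)\otimes L^2(\hat H)$; checking on elementary tensors $h\otimes k$ shows $(I\otimes\F_H)(g)(Hx) = \F_H\big(g(Hx)\big)$, which extends by density to prove that the ampliation acts as $\F_H$ on each fiber. Composing with the flip and the identification $L^2(H\backslash G)\otimes L^2(\hat H)\cong L^2(\hat H;L^2(H\backslash G))$, I define $Z$ to be the resulting composition. Being a composition of unitaries, $Z$ is unitary, and unwinding the definitions yields $(Zf)(\alpha)(Hx) = \widehat{f^{Hx}}(\alpha)$, which is \eqref{eq:Zak}. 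The inverse formula \eqref{eq:ZakInv} follows by reversing each arrow: applying $\F_H^{-1}$ fiberwise to $\varphi$ sends $Hx\mapsto\varphi_{Hx}$ to $Hx\mapsto\widecheck{\varphi_{Hx}}$, and $U^{-1}$, which is pullback along the inverse of $T(\xi,Hx)=\xi\gamma(Hx)$ (Theorem \ref{thm:RMeasIsom}), then gives $(Z^{-1}\varphi)(\xi\gamma(Hx)) = \widecheck{\varphi_{Hx}}(\xi)$.

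For the intertwining relation \eqref{eq:ZakTrans}, the key observation is that left translation by $\xi\in H$ respects the fiber decomposition, precisely because $H$ is the subgroup we split off first. Indeed, for $\eta\in H$,
\[ (L_\xi f)^{Hx}(\eta) = f(\xi^{-1}\eta\,\gamma(Hx)) = f^{Hx}(\xi^{-1}\eta) = \big(L_\xi^H f^{Hx}\big)(\eta), \]
where $L_\xi^H$ denotes translation on $H$, since $\xi^{-1}\eta\in H$ keeps the product inside the fiber over $Hx$. Thus $(L_\xi f)^{Hx} = L_\xi^H f^{Hx}$, and applying the Fourier intertwining relation on the abelian group $H$, namely $(L_\xi^H g)^\wedge(\alpha) = \alpha(\xi^{-1})\hat g(\alpha)$, gives $\widehat{(L_\xi f)^{Hx}}(\alpha) = \alpha(\xi^{-1})\widehat{f^{Hx}}(\alpha)$ for a.e.\ $Hx$, which is exactly \eqref{eq:ZakTrans}.

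I expect the main obstacle to be measure-theoretic bookkeeping rather than any single hard estimate. One must confirm that $f^{Hx}$ lies in $L^2(H)$ for a.e.\ $Hx$ and depends measurably on the coset (both furnished by $U$ together with Fubini), that the ampliation $I\otimes\F_H$ genuinely acts fiberwise, and that the several identifications of $L^2(H\times H\backslash G)$ with vector-valued and tensor-product spaces are mutually consistent up to null sets. Promoting mere unitarity to the \emph{pointwise} formulas \eqref{eq:Zak} and \eqref{eq:ZakInv} is where care is required, and it is cleanest to verify them first on the dense subspace of simple tensors (or on $L^1(G)\cap L^2(G)$, where the Fourier transform is given by its integral formula) and then extend by continuity.
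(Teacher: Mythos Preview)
Your proposal is correct and follows essentially the same route as the paper: the paper also factors $Z$ as a composition $U_4 U_3 U_2 U_1$ through $L^2(H\times H\backslash G)\to L^2(H\backslash G;L^2(H))\to L^2(H\backslash G;L^2(\hat H))\to L^2(\hat H;L^2(H\backslash G))$, with $U_1$ coming from Corollary~\ref{cor:RMeasUn} and $U_3$ the fiberwise Fourier transform, and then verifies \eqref{eq:Zak}, \eqref{eq:ZakInv}, and \eqref{eq:ZakTrans} exactly as you do (including the identity $(L_\xi f)^{Hx}=L_\xi(f^{Hx})$). Your tensor-product phrasing $I\otimes\F_H$ is just a different packaging of the same $U_3$.
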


\begin{proof}
Construct a sequence of unitaries
\[ L^2(G) \overset{U_1}{\to} L^2(H\times H\backslash G) \overset{U_2}{\to} L^2(H\backslash G ; L^2(H)) \overset{U_3}{\to} L^2(H\backslash G; L^2(\hat{H})) \overset{U_4}{\to} L^2(\hat{H}; L^2(H\backslash G)), \]
where $U_1$ is the isomorphism from Corollary \ref{cor:RMeasUn}, $U_3$ is the unitary given by
\[ (U_3 \varphi)(Hx) = \widehat{\varphi(Hx)}, \]
and all others are the natural isomorphisms. Let $Z=U_4 U_3 U_2 U_1$. Then
\[ (U_2 U_1 f)(Hx)(\xi) = (U_1 f)(\xi, Hx) = f(\xi \gamma(Hx)) = f^{Hx}(\xi), \]
and
\[ (Zf)(\alpha)(Hx) = (U_4 U_3 U_2 U_1 f)(\alpha)(Hx) = (U_3 U_2 U_1 f)(Hx)(\alpha) = [(U_2 U_1 f)(Hx)]^{\wedge}(\alpha) = \widehat{f^{Hx}}(\alpha). \]
This proves \eqref{eq:Zak}. A similar computation verifies \eqref{eq:ZakInv}. Moreover, for every $f\colon G \to \C$ and every $\xi \in H$,
\[(L_\xi f)^{Hx}(\eta) = (L_\xi f)(\eta \gamma(Hx)) = f(\xi^{-1} \eta \gamma(Hx)) = f^{Hx}(\xi^{-1} \eta) = L_\xi(f^{Hx})(\eta). \]
Hence, for $f\in L^2(G)$ and $\xi \in H$,
\[ (Z L_\xi f)(\alpha)(Hx) = [ (L_\xi f)^{Hx} ]^{\wedge}(\alpha) = [ L_\xi (f^{Hx}) ]^{\wedge}(\alpha) = \alpha(\xi^{-1}) \widehat{f^{Hx}}(\alpha) = \alpha(\xi^{-1})\cdot (Zf)(\alpha)(Hx), \]
as in \eqref{eq:ZakTrans}.
\end{proof}

We call $Z$ the \emph{Zak transform}, for reasons that will soon be obvious. Whenever we find it useful, we will freely interpret $Z$ as the unitary $\tilde{Z} \colon L^2(G) \to L^2(\hat{H} \times H\backslash G)$ given by
\[ (\tilde{Z}f)(\alpha,Hx) = (Zf)(\alpha)(Hx) = \widehat{f^{Hx}}(\alpha). \]
We emphasize that both $Z$ \emph{and} the measure used to construct $L^2(H\backslash G)$ depend on the choice of Borel section $\gamma$. Our construction of the Zak transform generalizes the definition given by Weil in \cite[pp.\ 164--165]{W2} to the case where $G$ is nonabelian; see Example \ref{examp:Zaks}(vi) below. For more on the history of the Zak transform, we refer the reader to \cite{HSWW2}.

\begin{example}\label{examp:Zaks}
We now compute $Z$ in a wide variety of concrete settings.

(i) $\Z \subset \R$. To justify our usage of ``Zak transform'', we first compute $Z$ for the subgroup $\Z \subset \R$. Take Lebesgue measure for $\mu_\R$ and counting measure for $\mu_\Z$. We use the fundamental domain $[0,1)$. Since $\Z$ is discrete, the associated section $\gamma$ identifies $(\Z \backslash \R, \mu_{\Z \backslash \R})$ with the interval $[0,1)$ under Lebesgue measure, as explained in Remark \ref{rem:FundDom}. From this perspective, $f^{t+\Z}(k) = f(t+k)$ for $f\colon \R\to \C$, $t \in [0,1)$, and $k \in \Z$. When $\hat{\Z}$ is identified with $\T$, the Zak transform becomes the map $Z \colon L^2(\R) \to L^2(\T; L^2([0,1)))$ which for $f\in L^1(\R) \cap L^2(\R)$ is given by
\[ (Zf)(z)(t) = \widehat{f^{t+\Z}}(z) = \sum_{k\in \Z} f^{t+\Z}(k) z^{-k} = \sum_{k \in \Z} f(t+k) z^{-k}. \]
If we further identify $\T$ with the interval $[0,1)$ under Lebesgue measure, $Z$ can be thought of as the map $\tilde{Z} \colon L^2(\R) \to L^2([0,1) \times [0,1))$ given by
\begin{equation}\label{eq:ClasZak}
(\tilde{Z} f)(s,t) = \sum_{k\in \Z} f(t+k) e^{-2\pi i k s}
\end{equation}
for $f \in L^1(\R) \cap L^2(\R)$ and $s,t \in [0,1)$. This is exactly the classical Zak transform.

\bigskip

\noindent (ii) $\Z^m \subset \R^n$. More generally, let $m$ and $n$ be positive integers with $m \leq n$, and think of $\Z^m$ as the subgroup of $\R^n$ consisting of vectors with integers in the first $m$ entries and zeros in the last $n-m$. A fundamental domain is given by $[0,1)^m \times \R^{n-m}$, and since $\Z^m$ is discrete, the associated section $\gamma \colon \Z^m \backslash \R^n \to \R^n$ identifies the measure space $(\Z^m \backslash \R^n, \mu_{\Z^m \backslash \R^n})$ with $[0,1)^m \times \R^{n-m}$ under Lebesgue measure. Identifying $\widehat{\Z^m}$ with $[0,1)^m \subset \R^m$ as above, we can think of the Zak transform as a unitary
\[ \tilde{Z} \colon L^2(\R^n) \to L^2([0,1)^m \times [0,1)^m \times \R^{n-m}) \]
which for $f\in L^1(\R^n) \cap L^2(\R^n)$ is given by
\begin{equation}\label{eq:ZmRnZak}
(\tilde{Z}f)(s,t,x) = \sum_{k \in \Z^m} f(t + k, x) e^{-2\pi i k\cdot s}.
\end{equation}

\bigskip

\noindent (iii) $\R^m \subset \R^n$. Let $m$ and $n$ be positive integers with $m \leq n$, and consider $\R^m$ as the subgroup of $\R^n$ consisting of vectors with zeros in the last $n-m$ entries. Then $\R^m \backslash \R^n \cong \R^{n-m}$ with Lebesgue measure, by \eqref{eq:RhoMeas} and Remark \ref{rem:NormMeas}. Our section $\gamma \colon \R^{n-m} \to \R^n$ will be given by
\[ \gamma(x_{m+1},\dotsc, x_n) = (0,\dotsc,0,x_{m+1},\dotsc,x_n). \]
Identifying $\hat{\R^m}$ with $\R^m$ in the usual way, we can view the Zak transform for $\R^m \subset \R^n$ as a unitary
\[ Z \colon L^2(\R^n) \to L^2(\R^{m}; L^2(\R^{n-m})) \]
which for $f\in L^1(\R^n) \cap L^2(\R^n)$ is given by
\begin{equation}\label{eq:RmRnZak}
(Zf)(\xi)(y) = \int_{\R^m} f(x,y) e^{-2\pi i \xi \cdot x}\; dx.
\end{equation}

\bigskip

\noindent (iv) $\Z_p \subset \Q_p$. Let $p$ be a prime number, and let $\Q_p$ be the locally compact field of $p$-adic numbers
\[ x = \sum_{j=m}^\infty c_j p^j \]
for $m \in \Z$ and $c_j \in \{0,1,\dotsc, p-1\}$. The topology on $\Q_p$ is given by the $p$-adic norm $|\cdot|_p$; for $x$ as above with $c_m \neq 0$, $|x|_p = p^{-m}$. Any two elements of $\Q_p$ can be added or multiplied in the obvious way, and under these operations $\Q_p$ is a locally compact field. Consider $\Q_p$ as an LCA group under addition, and let $\Z_p$ be the compact open subgroup of $p$-adic integers
\[ \Z_p = \{ x \in \Q_p : |x|_p \leq 1\} = \left\{ \sum_{j=0}^\infty c_j p^j : c_j \in \{0,1,\dotsc, p-1\} \right\}. \]
A fundamental domain for $\Z_p$ is
\[ \Omega = \left\{ \sum_{j=m}^{-1} c_j p^j : m \in \Z_{<0},\, c_j \in \{0,1,\dotsc, p-1\} \right\}. \]
Since $\Z_p$ is open in $\Q_p$, the quotient $\Z_p \backslash \Q_p$ is discrete, and the section $\gamma \colon \Z_p \backslash \Q_p \to \Q_p$ associated with $\Omega$ is automatically Borel. Moreover, the image of a compact set automatically has compact closure, as required.

Identify $\hat{\Q}_p$ with $\Q_p$ as follows. For $x = \sum_{j=m}^\infty c_j p^j \in \Q_p$, we abbreviate
\[ e^{\pm2\pi i x} = \exp\left( \pm2\pi i \sum_{j=m}^{-1} c_j p^j \right). \]
Each $y \in \Q_p$ then defines a character $\omega_y \in \hat{\Q}_p$ by the formula $\omega_y(x) = e^{2\pi i y x}$, and the mapping $y \mapsto \omega_y$ is a topological group isomorphism of $\Q_p$ with $\hat{\Q}_p$. Moreover,
\[ \Z_p^* = \{ \omega_\xi : \xi \in \Z_p \}. \]
Hence $\hat{\Z}_p \cong \hat{\Q}_p / \Z_p^*$ is the discrete group of characters $\left.\omega_y\right|_{\Z_p}$ for $y \in \Omega$. 

When Haar measures are normalized so that $\mu_{\Q_p}(\Z_p) = \mu_{\Z_p}(\Z_p) = 1$, the dual measure on $\hat{\Z}_p$ is counting measure. Counting measure on $\Z_p \backslash \Q_p$ also causes \eqref{eq:RWeil} to hold. Identifying both $\hat{\Z}_p$ and $\Z_p \backslash \Q_p$ with $\Omega$ makes the Zak transform a unitary
\[ \tilde{Z} \colon L^2(\Q_p) \to l^2(\Omega \times \Omega) \]
which for $f\in L^1(\Q_p) \cap L^2(\Q_p)$ is given by
\begin{equation}\label{eq:ZpQpZak}
(\tilde{Z}f)(x,y) = \int_{\Z_p} f(y+ \xi)e^{-2\pi i x \xi}\, d\mu_{\Z_p}(\xi).
\end{equation}

\bigskip

\noindent (v) $\R \subset ax+b$. Let $G$ be the $ax+b$ group described in Example \ref{examp:ax+b}. For the normal subgroup
\[ H = \{(1,b) \in G : b \in \R\} \cong (\R,+), \]
we identify $\hat{H}$ with $\R$ in the usual way. When $L^2(H\backslash G)$ is identified with $L^2(\R_+, dx/x^2)$ via $x \mapsto H(x,0)$, the Zak transform becomes a unitary $Z_H \colon L^2(G) \to L^2(\R; L^2(\R_+,dx/x^2))$ which for $f\in L^1(G)\cap L^2(G)$ is given by
\[ (Z_H f)(\xi)(a) = \int_\R f(a,b) e^{-2\pi i \xi b}\, db. \]
On the other hand, the subgroup
\[ K = \{ (a,0) \in G : a > 0\} \]
is isomorphic with $(\R_+, \times)$, and its dual can be identified with $(\R,+)$ under the pairing
\[ \hat{\xi}(a,0) = a^{2\pi i \xi}. \]
For $(a,0) \in K$ and $K(1,b) \in K\backslash G$, we have
\[ f^{K(1,b)}(a,0) = f\bigl((a,0)\cdot \gamma_K(K(1,b))\bigr) = f\bigl((a,0)\cdot (1,b)\bigr) = f(a,ab). \]
Identifying $L^2(K\backslash G)$ with $L^2(\R)$ as in Example \ref{examp:ax+b}, 
the Zak transform becomes a unitary $Z_K \colon L^2(G) \to L^2(\R\times \R)$ which for $f\in L^1(G)\cap L^2(G)$ is given by
\[ (Z_K f)(\xi,b) = \int_0^\infty \frac{f(a,ab) a^{- 2\pi i \xi}}{a}\, da. \]

\bigskip

\noindent (vi) Let $G$ be any second countable LCA group with closed subgroup $H$. For $f\in C_c(G)$, $\alpha \in \hat{H}$, and $Hx \in H\backslash G$, we compute
\[ (\tilde{Z}f)(\alpha,Hx) = \widehat{f^{Hx}}(\alpha) = \int_H f(\xi \gamma(Hx)) \alpha(\xi^{-1})\, d\mu_H(\xi). \]
More generally, for $x \in G$ the function $\xi \mapsto f(\xi x)$ belongs to $C_c(H)$, so we can define a function $\tilde{\tilde{Z}} f \colon G \times \hat{G} \to \C$ by the formula
\begin{equation}\label{eq:AbelZak}
(\tilde{\tilde{Z}} f)(\omega,x) = \int_H f(\xi x) \omega(\xi^{-1})\, d\mu_H(\xi).
\end{equation}
This matches the definition of Zak transform given by Weil in \cite{W2} (although he didn't call it that, of course).
\end{example}

For the remainder of this section, we will assume that $G$ is abelian. Each $\omega \in \hat{G}$ then acts unitarily on $L^2(G)$ via the modulation $(M_\omega f)(x) = \omega(x) f(x)$. The Zak transform behaves well under modulations by $H^*$ and translations by $H$. When $f\in C_c(G)$ and $\kappa \in H^*$,
\[ (\tilde{Z} M_\kappa f)(\alpha, Hx) = \int_H (M_\kappa f)(\xi \gamma(Hx)) \alpha(\xi^{-1})\, d\mu_H(\xi) = \int_H \kappa(\xi \gamma(Hx)) f(\xi \gamma(Hx)) \alpha(\xi^{-1})\, d\mu_H(\xi) \]
\[ = \int_H \kappa(\gamma(Hx)) f(\xi \gamma(Hx)) \alpha(\xi^{-1})\, d\mu_H(\xi) = \kappa(\gamma(Hx))\cdot (\tilde{Z} f)(\alpha, Hx).\]
Since $\gamma(Hx) = \xi x$ for some $\xi \in H$, and since $\kappa(\xi)=1$, we can write $\kappa(x)$ in place of $\kappa(\gamma(Hx))$ in the last expression above. Extending by continuity and combining with \eqref{eq:ZakTrans}, we find that
\begin{equation}\label{eq:ZakTransMod}
(\tilde{Z} L_\xi M_\kappa f)(\alpha, Hx) = \alpha(\xi^{-1}) \kappa(x) \cdot (\tilde{Z} f)(\alpha, Hx)
\end{equation}
for all $f\in L^2(G)$, $\xi \in H$, and $\kappa \in H^*$.

\medskip

In the abelian setting, the Zak transform has a sibling, which we now introduce. Whenever we work in this setting we will use a fixed Borel section $\beta \colon \hat{G} / H^* \to \hat{G}$ that sends compact sets to pre-compact sets. We remind the reader that $G/H = H\backslash G$ and $\hat{G} / H^* = H^* \backslash \hat{G}$ as measure spaces; see the final line of Remark \ref{rem:NormMeas}.

\begin{prop}\label{prop:Fib}
In addition to the standing hypotheses, suppose that $G$ is abelian. There is a unitary map
\[ \mathcal{T} \colon L^2(G) \to L^2(\hat{G}/H^*; L^2(H^*)) \]
given by
\begin{equation}\label{eq:Fib}
(\mathcal{T}f)(\omega H^*)(\kappa) = \hat{f}(\beta(\omega H^*) \kappa).
\end{equation}
Moreover, for any $\xi \in H$,
\begin{equation}\label{eq:FibTrans}
(\mathcal{T} L_\xi f)(\omega H^*) = \omega(\xi^{-1})\cdot (\mathcal{T}f)(\omega H^*).
\end{equation}
\end{prop}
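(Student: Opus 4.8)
The plan is to realize $\mathcal{T}$ as a composition of the Fourier transform on $G$ with the measure-space splitting of $\hat{G}$ along $H^*$ furnished by Section \ref{sec:meas}, and then to read off the intertwining relation from the behavior of characters. First I would record the two ingredients. On one hand, since $G$ is abelian, $\F_G \colon L^2(G) \to L^2(\hat{G})$ is a unitary satisfying $(L_y f)^\wedge(\alpha) = \alpha(y^{-1}) \hat{f}(\alpha)$. On the other hand, I would apply Corollary \ref{cor:LMeasUn} to the pair $(\hat{G}, H^*)$ in place of $(G,\Gamma)$. Because $\hat{G}$ is abelian, hence unimodular, the modular functions $\Delta_{\hat{G}}$ and $\Delta_{H^*}$ are trivial, so we may take $\rho = 1$; moreover, as recorded at the start of Section \ref{sec:Zak}, the dual measures on $\hat{G}$, $H^* \cong (G/H)^\wedge$, and $\hat{G}/H^* \cong \hat{H}$ satisfy Weil's formula, which is exactly what the corollary requires. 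With $\rho = 1$ and the section $\beta$ in the role of $\tau$, Corollary \ref{cor:LMeasUn} produces a unitary $V \colon L^2(\hat{G}) \to L^2(H^* \times \hat{G}/H^*)$ with $(Vg)(\kappa, \omega H^*) = g(\beta(\omega H^*)\kappa)$.

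I would then set $\mathcal{T}$ to be $V\F_G$ followed by the canonical identification $L^2(H^* \times \hat{G}/H^*) \cong L^2(\hat{G}/H^*; L^2(H^*))$. Being a composition of unitaries, $\mathcal{T}$ is unitary, and unwinding the definitions gives $(\mathcal{T}f)(\omega H^*)(\kappa) = (V\hat{f})(\kappa, \omega H^*) = \hat{f}(\beta(\omega H^*)\kappa)$, which is \eqref{eq:Fib}.

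For the intertwining relation \eqref{eq:FibTrans}, I would compute directly on a fixed fiber. For $\xi \in H$, the Fourier intertwining relation gives
\[ (\mathcal{T} L_\xi f)(\omega H^*)(\kappa) = (L_\xi f)^\wedge(\beta(\omega H^*)\kappa) = \big[\beta(\omega H^*)\kappa\big](\xi^{-1})\, \hat{f}(\beta(\omega H^*)\kappa). \]
The crux is to simplify the character value $[\beta(\omega H^*)\kappa](\xi^{-1})$. Since $\kappa \in H^*$ and $\xi \in H$, we have $\kappa(\xi^{-1}) = 1$, and since $\beta(\omega H^*)$ lies in the coset $\omega H^*$ it differs from $\omega$ by an element of $H^*$, which also annihilates $\xi$; hence $[\beta(\omega H^*)\kappa](\xi^{-1}) = \beta(\omega H^*)(\xi^{-1}) = \omega(\xi^{-1})$. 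Substituting back yields $(\mathcal{T} L_\xi f)(\omega H^*)(\kappa) = \omega(\xi^{-1})(\mathcal{T}f)(\omega H^*)(\kappa)$ for a.e.\ $\kappa$, which is \eqref{eq:FibTrans}.

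The step I expect to require the most care is not any single computation but the verification that the measures line up so that $V$ is genuinely unitary: one must confirm that the chosen Haar measure on $H^*$ and the Weil-formula measure on $\hat{G}/H^*$ are precisely the dual measures under which $\F_G$ is an isometry, so that the codomain $L^2(\hat{G}/H^*; L^2(H^*))$ carries the correct product measure. Once this compatibility, already recorded in Section \ref{sec:Zak}, is invoked, the remaining arguments reduce to routine bookkeeping with the Borel section $\beta$ and the defining annihilation property of $H^*$.
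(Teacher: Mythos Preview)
Your proposal is correct and follows essentially the same route as the paper: compose the Fourier transform on $G$ with the unitary from Corollary~\ref{cor:LMeasUn} applied to the pair $(\hat{G},H^*)$ (with $\rho=1$ since $\hat{G}$ is abelian), then identify $L^2(H^*\times \hat{G}/H^*)$ with $L^2(\hat{G}/H^*;L^2(H^*))$; the intertwining relation is verified by the identical character computation. Your explicit remark about measure compatibility is a sensible point of care, and is exactly what the paper invokes from the preliminaries of Section~\ref{sec:Zak}.
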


\begin{proof}

Follow the Fourier transform $L^2(G) \to L^2(\hat{G})$ by the unitary $L^2(\hat{G}) \to L^2(H^* \times \hat{G}/H^*)$ from Corollary \ref{cor:LMeasUn}. When $L^2(H^* \times \hat{G}/H^*)$ is identified with $L^2(\hat{G}/H^*; L^2(H^*))$, the composition $\mathcal{T}$ is given by \eqref{eq:Fib}.

If $f\in L^2(G)$ and $\xi \in H$, we compute
\[ (\mathcal{T} L_\xi f)(\omega H^*)(\kappa)= (L_\xi f)^\wedge(\beta(\omega H^*) \kappa) = \beta(\omega H^*)(\xi^{-1}) \kappa(\xi^{-1}) \hat{f}(\beta(\omega H^*) \kappa) = \omega(\xi^{-1})\cdot (\mathcal{T} f)(\omega H^*)(\kappa), \]
since $\beta(\omega H^*) = \omega \chi$ for some $\chi \in H^*$, and $\chi(\xi^{-1}) = \kappa(\xi^{-1}) = 1$. This proves \eqref{eq:FibTrans}.
\end{proof}

We call $\mathcal{T}$ the \emph{fiberization} map. In the special case where $H$ is discrete and $G/H$ is compact, Proposition \ref{prop:Fib} was proved separately by Kamyabi Gol and Raisi Tousi \cite[Proposition 2.1]{KR} and Cabrelli and Paternostro \cite[Proposition 3.3]{CP}. To the author's knowledge, every existing classification of $H$-TI uses some version of fiberization. 

The Zak transform is closely related to fiberization in the abelian setting, and indeed $\mathcal{T}$ can be obtained from $Z$ through a modulation in $L^2(\hat{H}; L^2(H\backslash G))$ and the Fourier transform on $H\backslash G$, as we now show. With the isomorphisms $\hat{H} \cong \hat{G}/H^*$ and $H^* \cong (G/H)^\wedge$ in mind, define a modulation $M\colon L^2(\hat{H}; L^2(H\backslash G)) \to L^2(\hat{H}; L^2(H\backslash G))$ by the formula
\begin{equation}\label{eq:ZakFibMod}
(M \varphi)(\left. \omega \right|_H)(Hx) = \beta(\omega H^*)(\gamma(Hx)^{-1})\cdot \varphi(\left. \omega \right|_H)(Hx)
\end{equation}
for $\varphi \in L^2(\hat{H};L^2(H\backslash G))$, $\omega \in \hat{G}$, and $Hx \in H\backslash G$. We claim that
\begin{equation}\label{eq:ZakVsFib}
(\mathcal{T}f)(\omega H^*)(\kappa) = [ (M Zf)(\left.\omega\right|_H) ]^\wedge(\hat{\kappa})
\end{equation}
for any $f\in L^2(G)$, where the Fourier transform on the right is taken over $H\backslash G$. Indeed, for any $f\in C_c(G)$, we compute
\[ (\mathcal{T} f)(\omega H^*)(\kappa) = \hat{f}(\beta(\omega H^*) \kappa) = \int_G f(x) \beta(\omega H^*)(x^{-1}) \kappa(x^{-1})\, d\mu_G(x) \]
\[ = \int_{H\backslash G} \int_H f(\xi \gamma(Hx))\cdot \beta(\omega H^*)(\gamma(Hx)^{-1} \xi^{-1})\cdot \kappa( \gamma(Hx)^{-1} \xi^{-1})\, d\mu_H(\xi)\, d\mu_{H\backslash G}(Hx) \]
\[ = \int_{H\backslash G} \beta(\omega H^*)(\gamma(Hx)^{-1})\cdot \kappa(\gamma(Hx)^{-1}) \int_H f(\xi \gamma(Hx))\cdot \beta(\omega H^*)(\xi^{-1})\cdot \kappa(\xi^{-1})\, d\mu_H(\xi)\, d\mu_{H\backslash G}(Hx). \]
This is messy, but it cleans up nicely. First, $\kappa(\gamma(Hx)^{-1}) = \kappa(x^{-1})$, since $\kappa(\eta) = 1$ for any $\eta \in H$. Likewise, $\beta(\omega H^*)(\xi^{-1}) = \omega(\xi^{-1})$, since any element of $H^*$ annihilates $\xi^{-1}$. We also have $\kappa(\xi^{-1}) = 1$, and we can abbreviate $f(\xi \gamma(Hx)) = f^{Hx}(\xi)$. With all that in mind, our last equation reads
\[ (\mathcal{T} f)(\omega H^*)(\kappa) = \int_{H\backslash G} \beta(\omega H^*)(\gamma(Hx)^{-1})\cdot \kappa(x^{-1}) \int_H f^{Hx}(\xi) \omega(\xi^{-1})\, d\mu_H(\xi)\, d\mu_{H\backslash G}(Hx) \]
\[ = \int_{H\backslash G} \beta(\omega H^*)(\gamma(Hx)^{-1})\cdot \widehat{f^{Hx}}(\left. \omega \right|_H)\cdot \kappa(x^{-1})\, d\mu_{H\backslash G}(Hx) \]
\[ = \int_{H\backslash G} (M Z f)(\left. \omega \right|_H)(Hx)\cdot \hat{\kappa}(Hx^{-1})\, d\mu_{H\backslash G}(Hx) = [ (M Zf)(\left. \omega \right|_H) ]^\wedge(\hat{\kappa}). \]
Thus \eqref{eq:ZakVsFib} holds for all $f\in C_c(G)$; extending with continuity gives it for all $f\in L^2(G)$.

\begin{example}
Let us interpret \eqref{eq:ZakVsFib} for the classical Zak transform \eqref{eq:ClasZak}. Identify $\hat{\R}$ with $\R$ and $\Z^*$ with $\Z$ in the usual way: each $\xi \in \R$ defines a character $\hat{\xi} \in \hat{\R}$ by $\hat{\xi}(x) = e^{2\pi i \xi x}$, and $\Z^* = \{ \hat{k}\in \hat{\R} : k \in \Z\}$. In Example \ref{examp:Zaks}(i), the identification of $\T\cong \hat{\Z} \cong \hat{\R}/\Z^*$ with $[0,1)\subset \R$ describes a Borel section $\beta \colon \hat{\R}/\Z^* \to \hat{\R}$ with fundamental domain $\beta(\hat{\R}/\Z^*) = [0,1) \subset \hat{\R}$. Then for $\varphi \in L^2([0,1)\times [0,1))$, the modulation $M$ in \eqref{eq:ZakFibMod} is given by
\[ (M\varphi)(s,t) = \hat{s}(-t) \varphi(s,t) = e^{-2\pi i s t} \varphi(s,t). \]
Thus, \eqref{eq:ZakVsFib} says that for all $f\in L^2(\R)$ and a.e.\ $s\in [0,1)$,
\[ \hat{f}(s+k) = \int_0^1 e^{-2\pi i s t} \cdot (\tilde{Z}f)(s,t)\cdot e^{-2\pi i k t} \, dt = \int_0^1 (\tilde{Z}f)(s,t)\cdot e^{-2\pi i (s+k)t}\, dt  \quad \text{for all }k\in \Z. \]
\end{example}

\smallskip

Here is another relation between the fiberization map and the Zak transform. Fix $f,g \in L^2(G)$. For every $\xi \in H$, \eqref{eq:ZakTrans} and \eqref{eq:ModularI} show that
\begin{equation}\label{eq:ZakBrack}
\langle f, L_\xi g \rangle = \langle Zf, Z(L_\xi g) \rangle = \int_{\hat{H}} \langle (Zf)(\alpha), (Z L_\xi g)(\alpha) \rangle d\mu_{\hat{H}}(\alpha) = \int_{\hat{H}} \langle (Zf)(\alpha), (Zg)(\alpha) \rangle \alpha(\xi)\, d\mu_{\hat{H}}(\alpha)
\end{equation}
\[ = \int_{\hat{H}} \langle (Zf)(\alpha^{-1}), (Zg)(\alpha^{-1}) \rangle \overline{\alpha(\xi)}\, d\mu_{\hat{H}}(\alpha). \]
On the other hand, a similar computation involving \eqref{eq:FibTrans} produces
\begin{equation}\label{eq:FibBrack}
\langle f, L_\xi g \rangle = \int_{\hat{G}/H^*} \langle (\mathcal{T}f)(\omega^{-1} H^*), (\mathcal{T}g)(\omega^{-1} H^*) \rangle \overline{\omega(\xi)}\, d\mu_{\hat{G}/H*}(\omega H^*)
\end{equation}
\[ = \int_{\hat{H}} \langle (\mathcal{T}f)(\omega^{-1} H^*), (\mathcal{T}g)(\omega^{-1} H^*) \rangle \overline{\omega(\xi)}\, d\mu_{\hat{H}}(\left. \omega \right|_H). \]
Since the Fourier transform $L^1(\hat{H}) \to C_0(H)$ is injective,
\begin{equation}
\langle (\mathcal{T}f)(\omega H^*), (\mathcal{T}g)(\omega H^*) \rangle_{L^2(H^*)} = \langle (Zf)(\left. \omega \right|_H), (Zg)(\left. \omega \right|_H) \rangle_{L^2(H\backslash G)} \quad \text{for a.e. }\omega H^* \in \hat{G}/H^*.
\end{equation}

\medskip

%H-TI SPACES=============================================================
\section{The structure of $H$-TI spaces in $L^2(G)$} \label{sec:HTIStr}

Returning to the more general case, where $G$ need not be abelian, we now classify $H$-TI spaces in $L^2(G)$.  Given a family $\A \subset L^2(G)$, we will denote
\[ E^H(\A) = \{ L_\xi \varphi : \xi \in H, \varphi \in \A\} \]
for the left $H$-translates of $\A$, and
\[ S^H(\A) = \overline{\spn}\{ L_\xi \varphi : \xi \in H, \varphi \in \A\} \]
for the $H$-TI space it generates. We will also give conditions under which $E^H(\A)$ forms a continuous frame or a Riesz basis for $S^H(\A)$.

When $J \colon \hat{H} \to \{\text{closed subspaces of }L^2(H\backslash G)\}$ is a range function, we write $P_J(\alpha) \colon L^2(H\backslash G) \to J(\alpha)$ for the orthogonal projection associated to $\alpha \in \hat{H}$. We also denote
\[ V_J = \{ f \in L^2(G) : (Zf)(\alpha) \in J(\alpha) \text{ for a.e.\ } \alpha \in \hat{H}\}. \]
If $G$ is abelian and $\tilde{J} \colon \hat{G}/H^* \to \{ \text{closed subspaces of }L^2(H^*)\}$, we similarly write $\tilde{P}_{\tilde{J}}(\omega H^*) \colon L^2(H^*) \to \tilde{J}(\omega H^*)$ for the orthogonal projection associated to $\omega H^* \in \hat{G}/H^*$, and we define
\[ \tilde{V}_{\tilde{J}} = \{ f \in L^2(G) : (\mathcal{T}f)(\omega H^*) \in \tilde{J}(\omega H^*) \text{ for a.e.\ } \omega H^* \in \hat{G}/H^*\}.\]

The next theorem is an application of \cite[Theorem 2.4]{BR}. Its provenance stretches back to Helson \cite{H} and Srinivasan \cite{S}. Part (ii) generalizes results of de Boor, DeVore, and Ron \cite[Result 1.5]{BDR1}; Bownik \cite[Proposition 1.5]{B}; Cabrelli and Paternostro \cite[Theorem 3.10]{CP}; Kamyabi Gol and Raisi Tousi \cite[Theorem 3.1]{KR}; and Bownik and Ross \cite[Theorem 3.8]{BR}. In contrast with these references, we do not require $G/H$ to be compact. Part (i) opens the door even wider, by allowing $G$ to be nonabelian. As far as the author knows, the results in (i) are new even for $\Z \subset \R$. 

For another description of $H$-TI spaces, in terms of the ``extra'' invariance of an invariant subspace, we refer the reader to \cite{ACHKM,ACP,ACP2,SW}. In the special case where $G$ is abelian and $H$ contains a countable discrete subgroup $K$ such that $G/K$ is compact, these papers describe $H$-TI spaces in terms of the range function classification of $K$-TI spaces given in \cite{B,CP,BDR1,KR}. In particular, their descriptions of $H$-TI spaces use the fiberization map for $K\subset G$. We do not require $H$ to contain such a subgroup here, and our classifications are in terms of the Zak transform and fiberization map for $H$ itself.

\begin{theorem}\label{thm:HTIClass}
(i) $H$-TI spaces in $L^2(G)$ are indexed by measurable range functions 
\[ J\colon \hat{H} \to \{ \text{closed subspaces of }L^2(H\backslash G)\}, \]
provided we identify range functions that agree a.e. A bijection maps $J \mapsto V_J$. When $\A \subset L^2(G)$ is a family with a countable dense subset $\A_0 \subset \A$, $S^H(\A) = S^H(\A_0)$, and the associated range function is given by
\begin{equation}\label{eq:ZakJ}
J(\alpha) = \overline{\spn}\{ (Z f)(\alpha) : f \in \A_0\}.
\end{equation}

\medskip

\noindent (ii) In addition to the standing assumptions, suppose that $G$ is abelian. Then $H$-TI spaces in $L^2(G)$ can also be indexed by measurable range functions
\[ \tilde{J} \colon \hat{G}/H^* \to \{\text{closed subspaces of }L^2(H^*)\}, \]
provided we identify range functions that agree a.e. A bijection maps $\tilde{J} \mapsto \tilde{V}_{\tilde{J}}$. For a family $\A \subset L^2(G)$ with countable dense subset $\A_0 \subset \A$, the range function associated with $S^H(\A) = S^H(\A_0)$ is
\begin{equation}\label{eq:FibJ}
\tilde{J}(\omega H^*) = \overline{\spn}\{ (\mathcal{T} f)(\omega H^*) : f \in \A_0\}.
\end{equation}
\end{theorem}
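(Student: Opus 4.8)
The plan is to prove both parts by transporting the abstract range function classification of Proposition \ref{prop:AbstRan} through the unitary intertwiners supplied by Theorem \ref{thm:Zak} and Proposition \ref{prop:Fib}. For part (i), I take $X = \hat{H}$, $\H = L^2(H\backslash G)$, and introduce the family of multipliers
\[ \D = \{ g_\xi : \xi \in H\}, \qquad g_\xi(\alpha) = \alpha(\xi^{-1}). \]
Each $g_\xi$ takes values in $\T$, hence lies in $L^\infty(\hat{H})$, and the intertwining relation \eqref{eq:ZakTrans} reads $Z(L_\xi f) = g_\xi \cdot Zf$. Since $Z$ is unitary and $\{g_\xi : \xi \in H\} = \D$, this shows at once that a closed subspace $M \subset L^2(G)$ is $H$-TI if and only if $Z(M)$ is $\D$-MI, because invariance under each $L_\xi$ corresponds exactly to invariance under multiplication by each $g_\xi$ in the sense of Definition \ref{def:DetRan}.

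The crux is to verify that $\D$ is a determining set for $L^1(\hat{H})$. Recalling Pontryagin duality $\widehat{\hat{H}} \cong H$, the function $g_\xi$ is precisely the character of $\hat{H}$ dual to $\xi \in H$, so as $\xi$ ranges over $H$ the $g_\xi$ exhaust all characters of $\hat{H}$. Comparing with the Fourier transform formula over $\hat{H}$ (whose dual group is $H$), for $f\in L^1(\hat{H})$ the integral $\int_{\hat{H}} f(\alpha)\, g_\xi(\alpha)\, d\mu_{\hat{H}}(\alpha)$ is exactly the value $\hat{f}(\xi)$. Thus vanishing of these integrals for all $\xi\in H$ forces $\hat{f}\equiv 0$, and injectivity of the Fourier transform $L^1(\hat{H})\to C_0(H)$ gives $f = 0$. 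Moreover $(\hat{H},\mu_{\hat{H}})$ is $\sigma$-finite and $L^2(H\backslash G)$ is separable, since $G$ is second countable.

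With these ingredients, part (i) follows mechanically. By Proposition \ref{prop:AbstRan}(ii) the closed $\D$-MI subspaces of $L^2(\hat{H}; L^2(H\backslash G))$ are indexed by measurable range functions $J$ via $J \mapsto M_J$; pulling back through the unitary $Z$ yields the bijection $J \mapsto V_J = Z^{-1}(M_J)$, since $V_J = \{f : (Zf)(\alpha) \in J(\alpha) \text{ a.e.}\}$. For a family $\A$ with countable dense $\A_0$, unitarity of $Z$ gives $Z(S^H(\A)) = S_\D(Z\A)$ and $Z(S^H(\A_0)) = S_\D(Z\A_0)$, so Proposition \ref{prop:AbstRan}(iii) delivers $S_\D(Z\A) = S_\D(Z\A_0) = M_J$ with $J(\alpha) = \overline{\spn}\{(Zf)(\alpha) : f \in \A_0\}$, which is \eqref{eq:ZakJ} and yields $S^H(\A) = S^H(\A_0) = V_J$.

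Part (ii) is structurally identical, replacing $Z$ by the fiberization map $\mathcal{T}$, with $X = \hat{G}/H^*$, $\H = L^2(H^*)$, and multipliers $h_\xi(\omega H^*) = \omega(\xi^{-1})$. I first check $h_\xi$ is well defined on cosets: if $\omega' = \omega\kappa$ with $\kappa \in H^*$, then $\omega'(\xi^{-1}) = \omega(\xi^{-1})\kappa(\xi^{-1}) = \omega(\xi^{-1})$ because $\kappa$ annihilates $\xi^{-1}\in H$. The relation \eqref{eq:FibTrans} reads $\mathcal{T}(L_\xi f) = h_\xi \cdot \mathcal{T}f$, and under the identification $\hat{G}/H^* \cong \hat{H}$ via $\omega H^* \mapsto \omega|_H$ the functions $h_\xi$ correspond to the characters $g_\xi$ above; hence $\{h_\xi : \xi \in H\}$ is again the full character group and the same Fourier-uniqueness argument makes it a determining set for $L^1(\hat{G}/H^*)$. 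The remaining bookkeeping transfers verbatim. I expect the only genuine obstacle to be this determining-set verification, namely correctly matching the multipliers $g_\xi, h_\xi$ with the characters of $\hat{H}$ and $\hat{G}/H^*$ and reducing the determining condition to injectivity of the Fourier transform; everything else is transport of structure through a unitary.
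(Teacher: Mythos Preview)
Your proof is correct and follows essentially the same route as the paper's: both transport Proposition~\ref{prop:AbstRan} through the unitaries $Z$ and $\mathcal{T}$, identifying the multipliers $g_\xi$ (resp.\ $h_\xi$) with the characters of $\hat{H}$ (resp.\ $\hat{G}/H^*$) and verifying the determining-set condition via Fourier uniqueness. The only cosmetic difference is that the paper packages the determining-set verification into a separate lemma establishing the stronger Parseval determining-set property (needed later for the frame results), whereas you prove the weaker determining property directly from injectivity of the Fourier transform on $L^1$.
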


We will need the following lemma, which essentially restates \cite[Lemma 3.5]{BR}.

\begin{lemma}\label{lem:CharParDet}
Let $\mathcal{G}$ be an LCA group with Haar measure $\mu_{\mathcal{G}}$, and let $\hat{\mathcal{G}}$ be its dual group with dual Haar measure $\mu_{\hat{\mathcal{G}}}$. Then $\hat{\mathcal{G}}$ forms a Parseval determining set  for $L^1(\mathcal{G})$ with respect to $\mu_{\hat{\mathcal{G}}}$. In other words,
\begin{equation}\label{eq:Planch}
\int_{\hat{\mathcal{G}}} \left| \int_{\mathcal{G}} f(x) \overline{\alpha(x)}\, d\mu_{\mathcal{G}}(x) \right|^2 d\mu_{\hat{\mathcal{G}}}(\alpha) = \int_{\mathcal{G}} |f(x)|^2\, d\mu_{\mathcal{G}}(x)
\end{equation}
for each $f\in L^1(\mathcal{G})$; both sides may be infinite.
\end{lemma}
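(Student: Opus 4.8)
The plan is to recognize \eqref{eq:Planch} as nothing but the Plancherel theorem, stated at the start of Section \ref{sec:Zak}, rephrased to permit infinite values on both sides. Since $\overline{\alpha(x)} = \alpha(x^{-1})$ for $\alpha \in \hat{\mathcal{G}}$, the inner integral in \eqref{eq:Planch} is exactly $\hat f(\alpha)$, so the left side equals $\Norm{\hat f}_{L^2(\hat{\mathcal{G}})}^2$ and the claim reduces to $\Norm{\hat f}_{L^2(\hat{\mathcal{G}})}^2 = \Norm{f}_{L^2(\mathcal{G})}^2$. Using second countability (hence $\sigma$-compactness) of $\mathcal{G}$, I would fix compact sets $K_n \uparrow \mathcal{G}$ and set $f_n = f\,\mathbf{1}_{A_n}$ with $A_n = K_n \cap \{|f| \le n\}$, so that each $f_n \in L^1(\mathcal{G}) \cap L^2(\mathcal{G})$, $|f_n| \uparrow |f|$, and $f_n \to f$ in $L^1(\mathcal{G})$; consequently $\hat{f_n} \to \hat f$ uniformly on $\hat{\mathcal{G}}$.

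If $f \in L^2(\mathcal{G})$, then $f_n \to f$ in $L^2$ by dominated convergence (with dominant $4|f|^2$), so $(f_n)$ is Cauchy in $L^2$. The Plancherel isometry on $L^1 \cap L^2$ gives $\Norm{\hat{f_n} - \hat{f_m}}_2 = \Norm{f_n - f_m}_2$, so $(\hat{f_n})$ is Cauchy in $L^2(\hat{\mathcal{G}})$ and converges there to some limit whose a.e.\ value must coincide with the uniform limit $\hat f$. This yields $\hat f \in L^2(\hat{\mathcal{G}})$ with $\Norm{\hat f}_2 = \lim_n \Norm{f_n}_2 = \Norm{f}_2$, establishing \eqref{eq:Planch} with both sides finite.

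It remains to treat $f \in L^1(\mathcal{G}) \setminus L^2(\mathcal{G})$, where the right side is $+\infty$ and I must show the left side is too. I would argue the contrapositive: assuming $\hat f \in L^2(\hat{\mathcal{G}})$, set $g = \F_{\mathcal{G}}^{-1}(\hat f) \in L^2(\mathcal{G})$ and prove $g = f$ a.e., contradicting $f \notin L^2$. The main obstacle is exactly this identification: $f$ sits in $L^1$ and $g$ in $L^2$, their Fourier transforms are a priori defined on different domains, and because $\mu_{\hat{\mathcal{G}}}$ may be infinite one cannot compare them by a threefold Fubini. The route around this is the multiplication formula (a legitimate two-variable Fubini): on the class $\B$ of all $h \in L^1(\mathcal{G}) \cap L^2(\mathcal{G})$ with $\hat h \in L^1(\hat{\mathcal{G}})$ — which contains every $u * v$ with $u,v \in C_c(\mathcal{G})$, since then $\widehat{u*v} = \hat u\,\hat v \in L^1(\hat{\mathcal{G}})$ by Cauchy--Schwarz — Fourier inversion applies to $h$, and a short computation gives
\[ \langle g, h \rangle = \langle \hat f, \hat h \rangle_{L^2(\hat{\mathcal{G}})} = \int_{\mathcal{G}} f(x)\,\overline{h(x)}\, d\mu_{\mathcal{G}}(x) = \langle f, h \rangle, \]
so that $\int_{\mathcal{G}} (g - f)\,\overline{h}\, d\mu_{\mathcal{G}} = 0$ for every $h \in \B$.

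To pass from this to $g = f$, I would convolve against an approximate identity: for $u \in C_c(\mathcal{G})$ and a standard approximate identity $(\psi_i) \subset C_c(\mathcal{G})$, each $u * \psi_i$ lies in $\B$ and converges to $u$ both uniformly (with supports in a fixed compact set) and in $L^2$; writing $g - f \in L^1 + L^2$ and splitting accordingly lets me pass to the limit to obtain $\int_{\mathcal{G}} (g-f)\,\overline{u}\, d\mu_{\mathcal{G}} = 0$ for all $u \in C_c(\mathcal{G})$, whence $g = f$ almost everywhere. This contradiction finishes the hard case. I expect the only delicate points to be the verification that $\B$ is rich enough (dense in $L^2$ and adequate for the approximate-identity step) and the bookkeeping in the multiplication formula and inversion; the genuine difficulty is concentrated in the single implication that an $L^1$ function whose transform happens to be square-integrable must itself be square-integrable.
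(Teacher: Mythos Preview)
Your proof is correct. The paper's own proof is much shorter: after noting (as you do) that the left side is $\Norm{\hat f}_2^2$ and that the case $f \in L^1(\mathcal{G}) \cap L^2(\mathcal{G})$ is just Plancherel, it dispatches the remaining case $\Norm{f}_2 = \infty$ by citing 31.44(a) of Hewitt--Ross \cite{HR2}, which states precisely that an $L^1$ function whose Fourier transform is square-integrable must itself be square-integrable. You instead supply a self-contained proof of that implication via the multiplication formula and an approximate-identity argument --- which is essentially what underlies the Hewitt--Ross result anyway. Your approach buys independence from an external reference and makes the mechanism transparent; the paper's buys brevity. One small remark: your truncation step in the $L^1 \cap L^2$ case invokes $\sigma$-compactness (via second countability), which the lemma as stated does not literally assume, though it holds in every application made in the paper; you could sidestep this by invoking Plancherel directly for $f \in L^1 \cap L^2$, as the paper does, and reserve the work for the genuinely nontrivial case $f \notin L^2$.
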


\begin{proof}
For $f\in L^1(\mathcal{G})$, the left hand side of \eqref{eq:Planch} is precisely $\Norm{\hat{f}}_2^2$. If $f \in L^1(\mathcal{G}) \cap L^2(\mathcal{G})$, \eqref{eq:Planch} is just Plancherel's Theorem. On the other hand, if $\Norm{f}_2 = \infty$, then $\Norm{\hat{f}}_2 = \infty$ by 31.44(a) of \cite{HR2}. 
\end{proof}

\begin{rem}\label{rem:CharParDet}
Pontryagin Duality allows us to switch $\G$ and $\hat{\G}$ in the lemma above. Given $x \in \G$, write $X_x \in \hat{\hat{\G}}$ for the corresponding character $X_x(\alpha) = \alpha(x)$. Then $\D = (X_x)_{x \in \G}$ is a Parseval determining set for $L^1(\hat{\G})$ with respect to $\mu_\G$. 

When $G$ is abelian, we can identify $(\hat{G}/H^*, \mu_{\hat{G}/H^*})$ with $(\hat{H}, \mu_{\hat{H}})$ by mapping $\omega H^* \mapsto \left. \omega \right|_H$. Each $\xi \in H$ then defines a character $\tilde{X}_\xi$ on $\hat{G}/H^*$ by the formula 
\[ \tilde{X}_\xi(\omega H^*) = X_\xi(\left. \omega \right|_H) = \omega(\xi), \]
and the previous paragraph shows that $\tilde{\D} = ( \tilde{X}_\xi )_{\xi \in H}$ is a Parseval determining set for $L^1(\hat{G}/H^*)$ with respect to $\mu_H$.
\end{rem}

\smallskip

\noindent \emph{Proof of Theorem \ref{thm:HTIClass}.}
Let $\D$ be as in Remark \ref{rem:CharParDet}, with $\G = H$. By Theorem \ref{thm:Zak}, a subspace $M\subset L^2(G)$ is $H$-TI if and only if $ZM$ is a $\D$-MI subspace of $L^2(\hat{H}; L^2(H\backslash G))$. Thus (i) is an application of Proposition \ref{prop:AbstRan}. Likewise, (ii) follows immediately from Proposition \ref{prop:Fib}, Proposition \ref{prop:AbstRan}, and the remark above.
\hfill \qed

\medskip

As in the familiar case of integer shifts in $L^2(\R^n)$, our classification of $H$-TI spaces ties with a set of conditions under which the $H$-translates of a family $\A \subset L^2(G)$ form a continuous frame. Namely, it reduces the problem of $E^H(\A)$ forming a continuous frame for $S^H(\A)$ to an analysis of the fibers $J(\alpha) = \overline{\spn}\{(Zf)(\alpha) : f\in \A\}$. If $G$ is abelian we can replace the Zak transform with fiberization, and if $H$ is discrete we can replace ``continuous frame'' with ``Riesz basis''.

The next two theorems are applications of Theorems \ref{thm:Riesz} and \ref{thm:frame}. They generalize results of  Bownik \cite[Theorem 2.3]{B}; Kamyabi Gol and Raisi Tousi \cite[Theorems 4.1 and 4.2]{KR}; Cabrelli and Paternostro \cite[Theorems 4.1 and 4.3]{CP}; and Bownik and Ross \cite[Theorem 5.1]{BR}. In contrast with these results, we do not require $G/H$ to be compact. When we use the Zak transform, we do not even need $G$ to be abelian.

\begin{theorem}\label{thm:HTIFrame}
Let $(\mathcal{M},\mu_\mathcal{M})$ be a complete, $\sigma$-finite measure space, and let $\A = (f_t)_{t\in \mathcal{M}} \subset L^2(G)$ be a jointly measurable family of functions. Fix a countable dense subset $\A_0 \subset \A$, and let $J$ be as in \eqref{eq:ZakJ}. Given constants $0 < A \leq B < \infty$, the following are equivalent:
\begin{enumerate}[(i)]

\item $E^H(\A)$ forms a continuous frame for $S^H(\A)$ over $\mathcal{M} \times H$, with bounds $A,B$. In other words, for every $g\in S^H(\A)$,
\[ A \int_G |g(x)|^2\, d\mu_G(x) \leq \int_\mathcal{M} \int_H \left| \int_G g(x) \overline{ L_\xi f_t(x) }\, d\mu_G(x) \right|^2 d\mu_H(\xi)\, d\mu_\mathcal{M}(t) \leq B \int_G |g(x)|^2\, d\mu_G(x). \]

\item For a.e.\ $\alpha \in \hat{H}$ and every $h \in J(\alpha) \subset L^2(H\backslash G)$,
\[ A \Norm{h}^2 \leq \int_\mathcal{M} \left| \langle h, (Zf_t)(\alpha) \rangle \right|^2\, d\mu_\mathcal{M}(t) \leq B \Norm{h}^2. \]
\end{enumerate}
If $G$ is abelian and $\tilde{J}$ is as in \eqref{eq:FibJ}, the conditions above are equivalent to:

\begin{enumerate}[(i)]
\setcounter{enumi}{2}

\item For a.e.\ $\omega H^* \in \hat{G} / H^*$ and every $h \in \tilde{J}(\omega H^*) \subset L^2(H^*)$,
\[ A \Norm{h}^2 \leq \int_\mathcal{M} \left| \langle h, (\mathcal{T}f_t)(\omega H^*) \rangle \right|^2\, d\mu_\mathcal{M}(t) \leq B \Norm{h}^2. \]
\end{enumerate}
\end{theorem}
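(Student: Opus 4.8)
\textbf{The plan} is to reduce Theorem \ref{thm:HTIFrame} to Theorem \ref{thm:frame}, exactly as Theorem \ref{thm:HTIClass} reduced to Proposition \ref{prop:AbstRan}. The Zak transform $Z$ is a unitary from $L^2(G)$ onto $L^2(\hat{H}; L^2(H\backslash G))$ that intertwines left translation by $H$ with modulation by the Parseval determining set $\D = (X_\xi)_{\xi \in H}$ of Remark \ref{rem:CharParDet}. So I would apply Theorem \ref{thm:frame} with the measure space $X = \hat{H}$, the Hilbert space $\H = L^2(H\backslash G)$, the Parseval determining set $\D$, and the jointly measurable family $(Zf_t)_{t\in\mathcal{M}} \subset L^2(\hat{H}; L^2(H\backslash G))$ indexed by $\mathcal{N} = \mathcal{M}$.

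First I would record that, because $Z$ is unitary and carries $H$-translation to $\D$-modulation (via \eqref{eq:ZakTrans} and the identification $X_\xi(\alpha) = \alpha(\xi)$), it maps $S^H(\A)$ onto $S_\D(ZA)$ and maps $E^H(\A)$ onto $E_\D(Z\A)$ up to the unimodular scalars $\alpha(\xi^{-1})$, which do not affect frame bounds. Thus $E^H(\A)$ is a continuous frame for $S^H(\A)$ with bounds $A, B$ if and only if $E_\D(Z\A)$ is a continuous frame for $S_\D(Z\A)$ with bounds $A, B$. Translating condition (i) of Theorem \ref{thm:frame} through $Z$ gives precisely condition (i) of Theorem \ref{thm:HTIFrame}; here I must check that the inner product $\int_G g\,\overline{L_\xi f_t}\,d\mu_G$ equals $\int_{\hat{H}} \langle (Zg)(\alpha), X_\xi(\alpha)(Zf_t)(\alpha)\rangle\,d\mu_{\hat{H}}(\alpha)$ after unitarity, so that the frame inequality over $\mathcal{M}\times H$ matches the abstract one over $\mathcal{N}\times\mathcal{M}$. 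By \eqref{eq:AbstRan1}, the range function $J$ in \eqref{eq:ZakJ} is exactly the range function that Theorem \ref{thm:frame} attaches to the family $Z\A$, so condition (ii) of Theorem \ref{thm:frame} becomes condition (ii) of Theorem \ref{thm:HTIFrame} verbatim. This proves (i) $\iff$ (ii).

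For the equivalence with (iii) in the abelian case, I would run the identical argument with the fiberization map $\mathcal{T}$ in place of $Z$: Proposition \ref{prop:Fib} shows $\mathcal{T}$ is unitary onto $L^2(\hat{G}/H^*; L^2(H^*))$ and intertwines $H$-translation with modulation by the Parseval determining set $\tilde{\D} = (\tilde{X}_\xi)_{\xi\in H}$ of Remark \ref{rem:CharParDet}. Applying Theorem \ref{thm:frame} with $X = \hat{G}/H^*$, $\H = L^2(H^*)$, and the family $\mathcal{T}\A$ yields (i) $\iff$ (iii), with $\tilde{J}$ from \eqref{eq:FibJ} being the associated range function. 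One subtlety I would flag and dispatch: the joint measurability hypothesis on $\A$ must be transported through $Z$ and $\mathcal{T}$ so that $Z\A$ and $\mathcal{T}\A$ are jointly measurable in the sense required by Theorem \ref{thm:frame}; this follows because $Z$ and $\mathcal{T}$ are built from the Fourier transform and the measure-space isomorphisms of Section \ref{sec:meas}, all of which preserve measurability, and can be verified using Corollary \ref{cor:JointMeas}.

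\textbf{The main obstacle} I anticipate is not conceptual but bookkeeping: verifying that the integral expressions in condition (i) genuinely transform into the abstract frame integral of Theorem \ref{thm:frame}(i) under the unitaries. The scalars $\alpha(\xi^{-1})$ introduced by \eqref{eq:ZakTrans} are unimodular and integrate away harmlessly, but one must be careful that the outer measure on $H$ (Haar measure $\mu_H$) is correctly matched to the index measure on $\mathcal{N}$ in the abstract theorem, and that the Parseval identity \eqref{eq:Planch} (the defining property of $\D$ as a Parseval determining set, via Lemma \ref{lem:CharParDet}) is what collapses the triple integral over $\mathcal{M}\times H \times G$ into the fiberwise integral. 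Since Theorem \ref{thm:frame} already absorbs exactly this Parseval collapse in its own proof, the burden here is only to confirm the dictionary between the two notations, which I expect to be routine once the intertwining relations are in hand.
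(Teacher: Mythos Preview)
Your proposal is correct and follows essentially the same approach as the paper: transport the problem through $Z$ (resp.\ $\mathcal{T}$) to $L^2(\hat{H};L^2(H\backslash G))$ (resp.\ $L^2(\hat{G}/H^*;L^2(H^*))$), invoke the Parseval determining set $\D$ (resp.\ $\tilde{\D}$) of Remark \ref{rem:CharParDet}, and apply Theorem \ref{thm:frame}. The paper's proof also handles the joint measurability of $Z\A$ and $\mathcal{T}\A$ exactly as you suggest, by decomposing $Z$ and $\mathcal{T}$ into their constituent unitaries and checking that each preserves joint measurability via Corollary \ref{cor:JointMeas}. One small clarification: the intertwining \eqref{eq:ZakTrans} gives $Z(L_\xi f_t) = X_{\xi^{-1}}\cdot Zf_t$, so $Z$ maps $E^H(\A)$ \emph{exactly} onto $E_\D(Z\A)$ after the reparametrization $\xi\mapsto\xi^{-1}$, with no residual scalars to worry about.
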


As in the remarks following Theorem \ref{thm:frame}, condition (ii) says that for a.e.\ $\alpha \in \hat{H}$, the family $\{ [P_J(\alpha)] (Z f_t)(\alpha) : t\in \mathcal{M}\}$ forms a continuous frame for $J(\alpha)$ with bounds $A,B$. A similar consideration applies to (iii). 

When $\A$ is countable this theorem reduces a continuous problem in $L^2(G)$ to a discrete problem in $L^2(H\backslash G)$ or $L^2(H^*)$. For instance, when $\A$ consists of a single function $f\in L^2(G)$, condition (ii) is equivalent to
\begin{enumerate}[(i')]
\setcounter{enumi}{1}
\item For a.e.\ $\alpha \in \hat{H}$, either $(Zf)(\alpha) = 0$ or $A \leq \Norm{(Zf)(\alpha)}^2 \leq B$.
\end{enumerate}

\medskip

\noindent \emph{Proof of Theorem \ref{thm:HTIFrame}.}
We claim that $Z \A = (Z f_t)_{t\in \mathcal{M}} \subset L^2(\hat{H}; L^2(H\backslash G))$ is jointly measurable. To prove this, we consider the image of $\A$ under each of the isomorphisms $U_k$ used to construct $Z$ in the proof of Theorem \ref{thm:Zak}. The first isomorphism $U_1 \colon L^2(G) \to L^2(H\times H \backslash G)$ is gotten from a measure space isomorphism, so it must preserve the notion of joint measurability. Corollary \ref{cor:JointMeas} shows joint measurability is preserved by $U_2 \colon L^2(H\times H\backslash G) \to L^2(H\backslash G; L^2(H))$. Since the Fourier transform $L^2(H) \to L^2(\hat{H})$ leaves inner products unchanged, joint measurability is preserved by $U_3 : L^2(H\backslash G; L^2(H)) \to L^2(H\backslash G; L^2(\hat{H}))$. Another application of Corollary \ref{cor:JointMeas} gives joint measurability after applying $U_4 \colon L^2(H\backslash G; L^2(\hat{H})) \to L^2(\hat{H}; L^2(H\backslash G))$. This proves the claim.

Let $\D = (X_\xi)_{\xi \in H}$ be the Parseval determining set from Remark \ref{rem:CharParDet}.  Since the unitary $Z \colon L^2(G) \to L^2(\hat{H}; L^2(H\backslash G))$ intertwines left translation by $\xi \in H$ with multiplication by $X_\xi \in \D$, condition (i) above is equivalent to:
\begin{enumerate}[(i')]
\item $E_\D(Z\A)$ forms a continuous frame for $S_\D(Z\A)$ with bounds $A,B$.
\end{enumerate}
Moreover, the range function associated with the $\D$-MI space $S_\D(Z\A)$ is precisely $J$, by Proposition \ref{prop:AbstRan}. Hence, the equivalence of (i) and (ii) follows from the corresponding equivalence in Theorem \ref{thm:frame}.

When $G$ is abelian, the fiberization map $\mathcal{T}$ is made by composing the Fourier transform $L^2(G) \to L^2(\hat{G})$ with the isomorphisms $L^2(\hat{G}) \to L^2(H^* \times \hat{G}/H^*)$ and $L^2(H^* \times \hat{G}/H^*) \to L^2(\hat{G}/H^*; L^2(H^*))$. The first isomorphism preserves joint measurability as an easy consequence of Proposition \ref{prop:JointMeas} and Plancherel's Theorem, the second preserves it because it is based on a measure space isomorphism, and the third preserves it by Corollary \ref{cor:JointMeas}. Consequently, $\mathcal{T} \A = (\mathcal{T} f_t)_{t\in \mathcal{M}} \subset L^2(\hat{G}/H^*; L^2(H^*))$ is jointly measurable. An argument similar to the one in the paragraph above now proves the equivalence of (i) and (iii): replace $Z$ with $\mathcal{T}$, and $\D$ with $\tilde{\D}$ from Remark \ref{rem:CharParDet}.
\hfill \qed

\begin{theorem}\label{thm:HTIRiesz}
In addition to the standing assumptions, suppose that $H$ is discrete and $\mu_H$ is counting measure. Let $\A \subset L^2(G)$ be a countable family, and let 
\[ J(\alpha) = \overline{\spn}\{ (Zf)(\alpha) : f \in \A\} \]
for a.e.\ $\alpha \in \hat{H}$. For constants $0 < A \leq B < \infty$, the following are equivalent:
\begin{enumerate}[(i)]
\item $E^H(\A)$ is a Riesz basis for $S^H(\A)$ with bounds $A,B$.
\item For a.e.\ $\alpha \in \hat{H}$, $\{ (Zf)(\alpha) : f\in \A\}$ is a Riesz basis for $J(\alpha)$ with bounds $A,B$.
\end{enumerate}
If $G$ is abelian and
\[ \tilde{J}(\omega H^*) = \overline{\spn}\{ (\mathcal{T} f)(\omega H^*) : f\in \A\} \]
for a.e.\ $\omega H^* \in \hat{G}/ H^*$, the conditions above are equivalent to:
\begin{enumerate}[(i)]
\setcounter{enumi}{2}
\item For a.e.\ $\omega H^* \in \hat{G}/H^*$, $\{ (\mathcal{T}f)(\omega H^*) : f\in \A\}$ is a Riesz basis for $\tilde{J}(\omega H^*)$ with bounds $A,B$.
\end{enumerate}
\end{theorem}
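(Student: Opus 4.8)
The plan is to mirror the proof of Theorem~\ref{thm:HTIFrame}, replacing its appeal to the continuous-frame result (Theorem~\ref{thm:frame}) with the Riesz-sequence result (Theorem~\ref{thm:Riesz}). The role of the new hypothesis that $H$ is discrete is to upgrade the Parseval determining set $\D=(X_\xi)_{\xi\in H}$ of Remark~\ref{rem:CharParDet} into a genuine \emph{orthonormal basis} of $L^2(\hat H)$, which is exactly what the hypothesis of Theorem~\ref{thm:Riesz} demands. Indeed, when $H$ is discrete with counting measure, $\hat H$ is compact and its dual Haar measure is finite; the characters $X_\xi(\alpha)=\alpha(\xi)$ then lie in $L^2(\hat H)$, and since $\F_H\colon \ell^2(H)\to L^2(\hat H)$ is unitary and carries the point masses $\delta_\xi$ to $\overline{X_\xi}$, the family $\D$ is an orthonormal basis of $L^2(\hat H)$. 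In particular $\mu_{\hat H}(\hat H)<\infty$, so Theorem~\ref{thm:Riesz} applies with $X=\hat H$ and $\H=L^2(H\backslash G)$.

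Next I would transport everything through the Zak transform. The family $\A$ is countable, so $Z\A=(Zf)_{f\in\A}$ is a countable family in $L^2(\hat H;L^2(H\backslash G))$, and \eqref{eq:ZakTrans} gives $Z(L_\xi f)=\overline{X_\xi}\,(Zf)$; as $\xi\mapsto\xi^{-1}$ permutes $H$, the unitary $Z$ carries the indexed family $E^H(\A)$ bijectively onto $E_\D(Z\A)=(X_\xi\,Zf)_{\xi\in H,\,f\in\A}$. Since both $S^H(\A)$ and $S_\D(Z\A)$ are by definition the closed spans of these families, the completeness requirement in the definition of a Riesz basis is automatic, so on either side ``Riesz basis'' and ``Riesz sequence'' coincide. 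Thus condition (i) is equivalent to condition (i) of Theorem~\ref{thm:Riesz} for the orthonormal basis $\D$. Invoking the equivalence (i)$\iff$(v) there, in its Riesz-basis form (the remark following that theorem, valid here because $\A_0=\A$ and $J(\alpha)=\overline{\spn}\{(Zf)(\alpha):f\in\A\}$), directly identifies condition (i) with: for a.e.\ $\alpha\in\hat H$, $\{(Zf)(\alpha):f\in\A\}$ is a Riesz basis for $J(\alpha)$ with bounds $A,B$. This is condition (ii), so (i)$\iff$(ii).

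For the abelian case I would repeat the argument verbatim with the fiberization map $\mathcal{T}$ in place of $Z$ and the family $\tilde{\D}=(\tilde{X}_\xi)_{\xi\in H}$ of Remark~\ref{rem:CharParDet} in place of $\D$. Under the measure-space identification $\omega H^*\mapsto\omega|_H$ of $(\hat G/H^*,\mu_{\hat G/H^*})$ with $(\hat H,\mu_{\hat H})$, the family $\tilde{\D}$ corresponds to $\D$, hence is an orthonormal basis of $L^2(\hat G/H^*)$, and $\mu_{\hat G/H^*}(\hat G/H^*)<\infty$. Relation \eqref{eq:FibTrans} gives $\mathcal{T}(L_\xi f)=\overline{\tilde{X}_\xi}\,(\mathcal{T}f)$, so $\mathcal{T}$ intertwines $E^H(\A)$ with $E_{\tilde{\D}}(\mathcal{T}\A)$; applying Theorem~\ref{thm:Riesz} with $X=\hat G/H^*$ and $\H=L^2(H^*)$ yields (i)$\iff$(iii) exactly as before.

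The genuinely essential step, and the sole substantive difference from the proof of Theorem~\ref{thm:HTIFrame}, is the promotion of $\D$ from a Parseval determining set to an orthonormal basis of $L^2(\hat H)$. For non-discrete $H$ the characters $X_\xi$ do not belong to $L^2(\hat H)$ and Theorem~\ref{thm:Riesz} is unavailable, which is exactly why the Riesz-basis conclusion is restricted to the discrete setting. Everything else is routine bookkeeping: the harmless reindexing by $\xi\mapsto\xi^{-1}$ and the handling of the a.e.-defined range function.
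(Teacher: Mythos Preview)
Your proposal is correct and follows essentially the same route as the paper's proof, which simply notes that $\hat H$ is compact with $\mu_{\hat H}(\hat H)=1$ and then invokes Theorem~\ref{thm:Riesz} ``in the same way that Theorem~\ref{thm:HTIFrame} followed from Theorem~\ref{thm:frame}.'' Your elaboration that discreteness of $H$ promotes $\D=(X_\xi)_{\xi\in H}$ from a Parseval determining set to an orthonormal basis of $L^2(\hat H)$---the precise hypothesis needed in condition (i) of Theorem~\ref{thm:Riesz}---makes explicit what the paper leaves implicit in its two-sentence argument.
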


\begin{proof}
Recall that discrete abelian groups are dual to compact abelian groups, with counting measures dual to probability measures. Hence $\mu_{\hat{H}}(\hat{H}) = 1$. The theorem now follows from Theorem \ref{thm:Riesz} in the same way that Theorem \ref{thm:HTIFrame} followed from Theorem \ref{thm:frame}.
\end{proof}

Strictly speaking, the previous theorem holds even if $H$ is not discrete. However, when $\hat{H}$ is not compact, condition (ii) can never occur. See Remark \ref{rem:Riesz}.

\begin{rem}
When $\A$ consists of a single function $f\in L^2(G)$, the conditions in the previous theorems simplify even further. Let $\Omega_f = \{ \alpha \in \hat{H} : (Zf)(\alpha) \neq 0\}$. Then condition (ii) of Theorem \ref{thm:HTIFrame} is equivalent to
\begin{enumerate}[(ii')]
\item For a.e.\ $\alpha \in \Omega_f$, $A \leq \Norm{(Zf)(\alpha)}^2 \leq B$.
\end{enumerate}
When $H$ is discrete, we can likewise replace condition (ii) of Theorem \ref{thm:HTIRiesz} with
\begin{enumerate}[(ii')]
\item For a.e.\ $\alpha \in \hat{H}$, $A \leq \Norm{(Zf)(\alpha)}^2 \leq B$.
\end{enumerate}
Similar considerations apply for fiberization in the abelian setting.
\end{rem}

We end this section with a pair of results on Gabor systems with critical sampling. We will assume that $G$ is abelian. A closed subspace $M\subset L^2(G)$ is called \emph{$(H,H^*)$-translation/modulation-invariant}, or $(H,H^*)$-TMI, if $L_\xi M_\kappa f \in M$ whenever $f\in M$, $\xi \in H$, and $\kappa \in H^*$. TMI spaces have usually been called ``shift/modulation invariant'', or SMI, in the discrete case. Following the examples of Bownik and Ross \cite{BR} and Jakobsen and Lemvig \cite{JL}, we adopt the term TMI to emphasize that the subgroup involved need not be discrete.

Every family $\A \subset L^2(G)$ generates a \emph{Gabor system} $\{ L_\xi M_\kappa f : \xi \in H, \kappa \in H^*, f \in \A\}$. The closed linear span of this system is the smallest $(H,H^*)$-TMI space containing $\A$. The Zak transform has a long history of use for Gabor systems. We continue the tradition here. Our first result classifies $(H,H^*)$-TMI spaces in terms of the Zak transform. Our second result tells when Gabor systems are continuous frames.

The theorem below should be compared with Bownik \cite[Theorem 5.1]{B2} and Cabrelli and Paternostro \cite[Theorem 5.1]{CP2}. Given a Borel subset $E \subset \hat{H} \times H\backslash G$, we denote
\[ M_E = \{ f \in L^2(G) : (\tilde{Z}f)(\alpha, Hx) = 0 \text{ for a.e.\ }(\alpha,Hx) \notin E\}. \]
Two Borel subsets of $\hat{H} \times H\backslash G$ are called \emph{equivalent} if their symmetric difference has measure zero.

\begin{theorem}\label{thm:TMIClass}
The $(H,H^*)$-TMI spaces in $L^2(G)$ are indexed by equivalence classes of Borel subsets of $\hat{H} \times H\backslash G$. A bijection maps $E \mapsto M_E$. For $\A \subset L^2(G)$, any countable dense subset $\A_0\subset \A$ generates the same $(H,H^*)$-TMI space as does $\A$, and the corresponding subset of $\hat{H} \times H\backslash G$ is
\begin{equation}\label{eq:TMISet}
E = \{ (\alpha, Hx) \in \hat{H} \times H\backslash G : (\tilde{Z} f)(\alpha, Hx) \neq 0 \text{ for some }f \in \A_0\}.
\end{equation}
\end{theorem}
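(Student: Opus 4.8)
The plan is to transport the entire problem across the Zak transform and then invoke Proposition \ref{prop:AbstRan} in the scalar case $\H = \C$. Interpreting $Z$ as the unitary $\tilde{Z} \colon L^2(G) \to L^2(\hat{H} \times H\backslash G)$, the translation/modulation relation \eqref{eq:ZakTransMod} shows that $\tilde{Z}$ conjugates each operator $L_\xi M_\kappa$ into pointwise multiplication by
\[ g_{\xi,\kappa}(\alpha, Hx) = \alpha(\xi^{-1})\, \kappa(x). \]
Consequently a closed subspace $M \subset L^2(G)$ is $(H,H^*)$-TMI if and only if $\tilde{Z} M$ is a $\D$-MI subspace of $L^2(\hat{H} \times H\backslash G)$, where $\D = \{ g_{\xi,\kappa} : \xi \in H,\ \kappa \in H^* \}$. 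So once $\D$ is shown to be a determining set, Proposition \ref{prop:AbstRan} will classify these subspaces by measurable range functions, and it remains only to translate that classification into the language of Borel subsets.

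First I would identify $\D$ as the full character group of the LCA group $X = \hat{H} \times H\backslash G$. Since $G$ is abelian, $H$ is normal and $G$ is unimodular, so \eqref{eq:LRRadNik} gives $\mu_{H\backslash G} = \mu_{G/H}$, a Haar measure on $G/H = H\backslash G$; hence $\mu_{\hat{H}} \otimes \mu_{H\backslash G}$ is a Haar measure on the second countable LCA group $X$, which is therefore $\sigma$-finite. A character of $X$ factors as a product $\chi_1 \otimes \chi_2$ with $\chi_1 \in \widehat{\hat{H}}$ and $\chi_2 \in \widehat{H\backslash G}$. Pontryagin Duality identifies $\widehat{\hat{H}}$ with $H$ via $\xi \mapsto (\alpha \mapsto \alpha(\xi))$, while $\kappa \mapsto \hat{\kappa}$ identifies $H^*$ with $(G/H)^\wedge = \widehat{H\backslash G}$; running $\xi$ over $H$ and $\kappa$ over $H^*$ therefore produces every character $g_{\xi,\kappa}$ (inversion on the first factor merely reparametrizes $\widehat{\hat H}$). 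Thus $\D = \widehat{X}$ is a determining set for $L^1(X)$, a fact contained in Lemma \ref{lem:CharParDet} and Remark \ref{rem:CharParDet}; note that the determining-set property of Definition \ref{def:DetRan} reduces to injectivity of the Fourier transform on $L^1(X)$ and so is insensitive to the Haar normalization.

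With $\D$ in hand I would apply Proposition \ref{prop:AbstRan} with $\H = \C$. A measurable range function $J \colon X \to \{\text{closed subspaces of }\C\}$ takes only the values $\{0\}$ and $\C$, and the measurability condition applied to $u = v = 1$ says exactly that $E = \{ x \in X : J(x) = \C\}$ is measurable; conversely every Borel set arises this way, and two range functions agree a.e.\ precisely when the associated sets have null symmetric difference. Under this dictionary $M_J = \{ g \in L^2(X) : g = 0 \text{ a.e.\ off } E\}$, so pulling back by $\tilde{Z}$ gives $\tilde{Z}^{-1} M_J = M_E$ in the notation of the statement, and the bijection of Proposition \ref{prop:AbstRan}(ii) becomes the claimed bijection $E \mapsto M_E$. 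For the final assertion I would feed the transported family $\tilde{Z}\A$ (with countable dense subset $\tilde{Z}\A_0$) into Proposition \ref{prop:AbstRan}(iii): since $\tilde{Z}^{-1}(g_{\xi,\kappa}\,\tilde{Z} f) = L_\xi M_\kappa f$, the $\D$-MI span $S_\D(\tilde{Z}\A)$ pulls back to the $(H,H^*)$-TMI space generated by $\A$, the identity $S_\D(\tilde{Z}\A_0) = S_\D(\tilde{Z}\A)$ yields that $\A_0$ and $\A$ generate the same space, and the range function $J(x) = \overline{\spn}\{ (\tilde{Z} f)(x) : f \in \A_0\}$ equals $\C$ exactly when some $(\tilde{Z} f)(x) \neq 0$, reproducing \eqref{eq:TMISet}.

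I expect the only genuine content to lie in the second paragraph: verifying that the multipliers $g_{\xi,\kappa}$ exhaust the dual of $\hat{H} \times H\backslash G$ and that $\mu_{\hat{H}} \otimes \mu_{H\backslash G}$ is truly a Haar measure, so that $\D$ qualifies as a determining set. Everything else is a formal consequence of having already built $\tilde{Z}$ and of specializing Proposition \ref{prop:AbstRan} to $\H = \C$; the passage from scalar range functions to Borel sets is routine but must be stated with care to absorb the ubiquitous sets of measure zero.
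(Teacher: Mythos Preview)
Your proposal is correct and follows essentially the same approach as the paper: transport via $\tilde{Z}$, identify the multipliers $g_{\xi,\kappa}$ with the full character group of $\hat{H}\times H\backslash G$ (so that Lemma~\ref{lem:CharParDet} gives a determining set), and then apply Proposition~\ref{prop:AbstRan} with $\H=\C$, reading scalar range functions as Borel subsets. The paper's proof is terser but substantively identical; your extra care in verifying that $\mu_{H\backslash G}$ is Haar and in spelling out the range-function-to-set dictionary is fine but not strictly needed.
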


\begin{proof}
Since $H^* \cong (H\backslash G)^\wedge$, Pontryagin duality shows that $(\hat{H} \times H\backslash G)^\wedge \cong H\times H^*$. For $(\xi, \kappa) \in H \times H^*$, the corresponding character $X_{(\xi,\kappa)}\in (\hat{H} \times H\backslash G)^\wedge$ is given by $X_{(\xi, \kappa)}(\alpha, Hx) = \alpha(\xi) \kappa(x)$. By Lemma \ref{lem:CharParDet}, the family $\D=(X_{(\xi,\kappa)})_{\xi \in H, \kappa \in H^*}$ is a Parseval determining set for $L^1(\hat{H} \times H\backslash G)$. Moreover, \eqref{eq:ZakTransMod} shows that a subspace $M\subset L^2(G)$ is $(H,H^*)$-TMI if and only if $\tilde{Z}M \subset L^2(\hat{H} \times H\backslash G)$ is $\D$-MI. The proof follows from Proposition \ref{prop:AbstRan} once we observe that a range function $J\colon \hat{H} \times H\backslash G \to \{ \text{closed subsets of }\C\}$ identifies uniquely with the set
\[ E = \{ (\alpha, Hx) \in \hat{H} \times H\backslash G : J(\alpha,Hx) = \C\}. \]
Moreover, $J$ is a measurable range function if and only if $E$ is a Borel set.
\end{proof}

The next theorem generalizes a result of Arefijamaal \cite[Theorem 2.6]{A}. Also see Corollary 6.4.4 of Gr\"{o}chenig \cite{Gr}, and the discussion that follows it.

\begin{theorem}\label{thm:TMIFrame}
Let $(\mathcal{M},\mu_\mathcal{M})$ be a complete, $\sigma$-finite measure space, and let $\A = (f_t)_{t\in \mathcal{M}}\subset L^2(G)$ be a jointly measurable family of functions. Fix a countable dense subset $\A_0 \subset \A$, and let $E\subset \hat{H} \times H\backslash G$ be as in \eqref{eq:TMISet}. For constants $0 < A \leq B < \infty$, the following are equivalent:
\begin{enumerate}[(i)]
\item The Gabor system generated by $\A$ is a continuous frame for its closed linear span, with bounds $A,B$.
\item For a.e.\ $(\alpha, Hx) \in E$, 
\[ A \leq \int_\mathcal{M} |(\tilde{Z} f_t)(\alpha, Hx)|^2\, d\mu_\mathcal{M}(t) \leq B. \]
\end{enumerate}
\end{theorem}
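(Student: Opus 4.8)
The plan is to deduce this theorem from Theorem \ref{thm:frame} in exactly the way Theorem \ref{thm:HTIFrame} was deduced, but now carrying $L^2(G)$ over to the scalar-valued space $L^2(\hat{H} \times H\backslash G) = L^2(\hat{H}\times H\backslash G;\C)$ via the Zak transform $\tilde{Z}$, and using the two-parameter determining set from the proof of Theorem \ref{thm:TMIClass} in place of $\D$. The key observation is that under $\tilde{Z}$ the Gabor operators become multiplications by characters: by \eqref{eq:ZakTransMod}, $\tilde{Z}$ intertwines $L_\xi M_\kappa$ with multiplication by the function $(\alpha,Hx) \mapsto \alpha(\xi^{-1})\kappa(x)$, which is precisely the character $X_{(\xi^{-1},\kappa)}$ from Theorem \ref{thm:TMIClass}. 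Since $\xi \mapsto \xi^{-1}$ is a bijection of $H$, the totality of these multipliers is exactly $\D = (X_{(\xi,\kappa)})_{\xi\in H,\kappa\in H^*}$, which by Lemma \ref{lem:CharParDet} (applied through Pontryagin duality as in Theorem \ref{thm:TMIClass}) is a Parseval determining set for $L^1(\hat{H}\times H\backslash G)$.

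With this dictionary in hand, I would first check that $\tilde{Z}\A = (\tilde{Z}f_t)_{t\in\mathcal{M}}$ is jointly measurable. This follows from the argument in the proof of Theorem \ref{thm:HTIFrame}, which already shows $Z\A$ is jointly measurable in $L^2(\hat{H};L^2(H\backslash G))$; Corollary \ref{cor:JointMeas} then transfers joint measurability to the identified space $L^2(\hat{H}\times H\backslash G)$, where $\tilde{Z}\A$ lives. Because $\tilde{Z}$ is unitary and carries the Gabor system generated by $\A$ to $E_\D(\tilde{Z}\A)$, the former is a continuous frame for its closed linear span with bounds $A,B$ if and only if $E_\D(\tilde{Z}\A)$ is a continuous frame for $S_\D(\tilde{Z}\A)$ with the same bounds. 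By Proposition \ref{prop:AbstRan}, the range function attached to $S_\D(\tilde{Z}\A)$ is $J(\alpha,Hx) = \overline{\spn}\{(\tilde{Z}f)(\alpha,Hx) : f\in\A_0\}$, which maps into the closed subspaces of $\C$; hence each fiber is $\{0\}$ or $\C$, and the set on which it equals $\C$ is exactly the Borel set $E$ of \eqref{eq:TMISet}.

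Now I apply Theorem \ref{thm:frame} with $X = \hat{H}\times H\backslash G$, Hilbert space $\H = \C$, determining set $\D$ indexed by $H\times H^*$, and jointly measurable family $\tilde{Z}\A$ indexed by $\mathcal{M}$. Condition (i) of Theorem \ref{thm:frame} is the frame statement just described, so it matches condition (i) here. Condition (ii) of Theorem \ref{thm:frame} reads: for a.e.\ $(\alpha,Hx)$ and every $u\in J(\alpha,Hx)$,
\[ A|u|^2 \leq \int_\mathcal{M} |\langle u, (\tilde{Z}f_t)(\alpha,Hx)\rangle|^2\, d\mu_\mathcal{M}(t) \leq B|u|^2. \]
On the set where $J(\alpha,Hx)=\{0\}$ this is vacuous, while on $E$, where $J(\alpha,Hx)=\C$, taking $u=1$ and using $|\langle 1,z\rangle|^2 = |z|^2$ collapses it to
\[ A \leq \int_\mathcal{M} |(\tilde{Z}f_t)(\alpha,Hx)|^2\, d\mu_\mathcal{M}(t) \leq B, \]
which is precisely condition (ii) of the theorem. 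This chain of equivalences finishes the argument.

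The only genuinely delicate step is the joint-measurability bookkeeping — tracking $\A$ through the chain of unitaries defining $\tilde{Z}$ and invoking Corollary \ref{cor:JointMeas} to land in the scalar space — but this is handled verbatim by the corresponding portion of the proof of Theorem \ref{thm:HTIFrame}. Everything else is a direct translation through the unitary $\tilde{Z}$ together with the specialization of Theorem \ref{thm:frame} to the one-dimensional fibers forced by $\H = \C$.
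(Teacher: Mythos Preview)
Your proposal is correct and follows essentially the same approach as the paper: joint measurability of $\tilde{Z}\A$ is handled by reference to the proof of Theorem \ref{thm:HTIFrame}, and the equivalence is then obtained from Theorem \ref{thm:frame} via the same dictionary used in Theorem \ref{thm:TMIClass} (Parseval determining set $\D = (X_{(\xi,\kappa)})$, range function into subspaces of $\C$, identification of $E$ with the fiber set where $J = \C$). The paper compresses all of this into two sentences, but the content is identical to what you have spelled out.
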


\begin{proof}
As in the proof of Theorem \ref{thm:HTIFrame}, the family $\tilde{Z} \A = (\tilde{Z} f_t)_{t \in \mathcal{M}} \subset L^2(\hat{H} \times H\backslash G)$ is jointly measurable. The theorem now follows from Theorem \ref{thm:frame} in the same way that Theorem \ref{thm:TMIClass} followed from Proposition \ref{prop:AbstRan}.
\end{proof}

\medskip

%DUAL INTEGRABLE REPS===================================================
\section{Dual integrable representations of LCA groups}\label{sec:DualInt}

We now turn our attention to a more general problem. Given a representation of a locally compact group on a Hilbert space $\H$, we would like to know when the orbit of a family of vectors $\A \subset \H$ makes a continuous frame in $\H$. We give an answer for a large class of representations of LCA groups.

Throughout this section, $\G$ will denote a fixed, second countable LCA group. Its Haar measure is $\mu_\G$, its dual group is $\hat{\G}$, and the dual Haar measure on $\hat{\G}$ is $\mu_{\hat{\G}}$. For $x\in \G$, the corresponding character of $\hat{\G}$ is $X_x$; that is, $X_x(\alpha) = \alpha(x)$. We set $\D = (X_x)_{x\in G}$. As explained in Remark \ref{rem:CharParDet}, $\D$ is a Parseval determining set for $L^1(\hat{\G})$.

A (unitary) \emph{representation} of $\G$ on a Hilbert space $\H$ is a strongly continuous group homomorphism $\pi \colon \G \to U(\H)$ into the unitary group of $\H$. We call $\pi$ \emph{dual integrable} if there is a function
\[ [ \cdot , \cdot ] \colon \H \times \H \to L^1(\hat{\G}), \]
called a \emph{bracket} for $\pi$, such that
\[ \langle \varphi, \pi(x) \psi \rangle = \int_{\hat{\G}} [\varphi,\psi](\alpha)\cdot \overline{\alpha(x)}\, d\mu_{\hat{\G}}(\alpha) \quad \text{for all }\varphi, \psi \in \H \text{ and }x \in \G. \]
In other words, a representation is dual integrable when all of its matrix elements lie in the image of the Fourier transform $L^1(\hat{\G}) \to C_0(\G)$. The bracket gives the inverse Fourier transform of a matrix element. Consequently, the bracket is unique when it exists.

Dual integrable representations were introduced in the abstract setting by Hern{\'a}ndez, \v{S}iki{\'c}, Weiss, and Wilson in \cite{HSWW}. Concrete versions of the bracket have been around much longer. Early uses appear in Jia and Michelli \cite{JM} and de Boor, DeVore, and Ron \cite{BDR1,BDR2}. An analog of dual integrable representations for possibly nonabelian countable discrete groups was recently developed by Barbieri, Hern{\'a}ndez, and Parcet in \cite{BHP}. Another version for square integrable functions over the Heisenberg group appears in Barbieri, Hern{\'a}ndez, and Mayeli \cite{BHM}. 

In this section, we fix a dual integrable representation $\pi$ acting on a \emph{separable} Hilbert space $\H$. Given $\varphi \in \H$, we denote
\[ \langle \varphi \rangle = \overline{\spn}\{ \pi(x) \varphi : x \in \G \}. \]
We begin by recalling some basic properties of the bracket from \cite{HSWW}.

\begin{prop}\label{prop:BrackProp}
The bracket is a sesquilinear Hermitian map $[\cdot,\cdot]\colon \H \times \H \to L^1(\hat{G})$. Moreover, for $\varphi, \psi \in \H$ and $x\in G$, the following hold:
\begin{enumerate}[(i)]
\item $[\varphi,\varphi] \geq 0$ a.e.
\item $| [\varphi, \psi] | \leq [\varphi,\varphi]^{1/2} [\psi, \psi]^{1/2}$ a.e.
\item $\varphi \perp \langle \psi \rangle$ if and only if $[\varphi,\psi]=0$ a.e.
\item $[\pi(x) \varphi, \psi] = X_x\cdot [\varphi,\psi] = [\varphi, \pi(x^{-1}) \psi]$
\end{enumerate}
\end{prop}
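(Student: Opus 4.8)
The plan is to read the defining identity as a statement about Fourier transforms and then push every assertion through the injectivity of the Fourier transform on $L^1(\hat\G)$. Writing $c_{\varphi,\psi}(x) = \langle \varphi, \pi(x)\psi\rangle$ for the matrix coefficient, the hypothesis says exactly that $c_{\varphi,\psi}$ is the Fourier transform of $[\varphi,\psi]\in L^1(\hat\G)$, once we identify $\hat{\hat\G}$ with $\G$ via Pontryagin duality so that $X_x(\alpha)=\alpha(x)$. Since the Fourier transform $L^1(\hat\G)\to C_0(\G)$ is injective, the bracket is characterized as the unique $L^1(\hat\G)$-preimage of $c_{\varphi,\psi}$. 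Each claim will be proved by computing the relevant matrix coefficient, recognizing it as the Fourier transform of the proposed bracket, and invoking injectivity.

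First I would dispatch the purely algebraic claims — sesquilinearity, the Hermitian symmetry, and (iv). Sesquilinearity of $[\cdot,\cdot]$ is immediate because $c_{\varphi,\psi}$ is linear in $\varphi$ and conjugate-linear in $\psi$, while Fourier inversion is linear and injective. For the Hermitian property, unitarity gives $c_{\psi,\varphi}(x) = \overline{\langle\pi(x)\varphi,\psi\rangle} = \overline{c_{\varphi,\psi}(x^{-1})}$, and rewriting $\int[\varphi,\psi](\alpha)\overline{\alpha(x)}\,d\mu_{\hat\G}(\alpha)$ shows the right-hand side is the Fourier transform of $\overline{[\varphi,\psi]}$; injectivity then yields $[\psi,\varphi]=\overline{[\varphi,\psi]}$ a.e. For (iv) I would compute $\langle\pi(x)\varphi,\pi(y)\psi\rangle=\langle\varphi,\pi(x^{-1}y)\psi\rangle$ and expand $\overline{\alpha(x^{-1}y)}=X_x(\alpha)\overline{\alpha(y)}$; reading the result as a function of $y$ exhibits $X_x\cdot[\varphi,\psi]$ as the bracket of $\pi(x)\varphi$ against $\psi$, and the identical computation with $\pi(x^{-1})\psi$ gives the second equality. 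Part (iii) follows at once: $\varphi\perp\langle\psi\rangle$ means $c_{\varphi,\psi}\equiv 0$ on $\G$, which by injectivity is equivalent to $[\varphi,\psi]=0$ a.e.

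The substantive step is the positivity (i), and it is where I expect the only genuine difficulty. The Hermitian property with $\psi=\varphi$ already forces $[\varphi,\varphi]$ to be real a.e., so the content is the sign. I would extend $\pi$ to $L^1(\G)$ by $\pi(g)=\int_\G g(x)\pi(x)\,d\mu_\G(x)$ and compute, via Fubini and the defining identity, that $\Norm{\pi(g)\varphi}^2=\int_{\hat\G}[\varphi,\varphi](\alpha)\,|h_g(\alpha)|^2\,d\mu_{\hat\G}(\alpha)$, where $h_g(\alpha)=\int_\G g(x)\alpha(x)\,d\mu_\G(x)$ is essentially $\hat g$. Because the left side is nonnegative and the functions $|h_g|^2$, as $g$ ranges over $L^1(\G)\cap L^2(\G)$, are dense enough among nonnegative $L^1(\hat\G)$ functions to test against any Borel set of finite measure, I would conclude $\int_S[\varphi,\varphi]\ge 0$ for all such $S$, hence $[\varphi,\varphi]\ge 0$ a.e. The delicate points are the Fubini justification and the density approximation; alternatively one can bypass them by observing that $c_{\varphi,\varphi}$ is a continuous positive-definite function and applying Bochner's theorem, whose representing measure must coincide with $[\varphi,\varphi]\,d\mu_{\hat\G}$ by uniqueness of Fourier inversion.

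Finally, (ii) is a pointwise Cauchy--Schwarz inequality deduced from (i). For each scalar $c$ I would apply (i) to $\varphi+c\psi$ and expand by sesquilinearity to get $[\varphi,\varphi]+c[\psi,\varphi]+\bar c[\varphi,\psi]+|c|^2[\psi,\psi]\ge 0$ a.e. The obstacle is that the exceptional null set depends on $c$; I would circumvent it by restricting $c$ to a countable dense subset of $\C$, discarding the countable union of null sets, and then using continuity in $c$ to recover the inequality for every $c\in\C$ off a single null set. For a.e.\ $\alpha$ I would then choose the phase of $c$ to align with $[\varphi,\psi](\alpha)$ and optimize over its modulus, so that nonnegativity of the resulting real quadratic forces $|[\varphi,\psi](\alpha)|^2\le[\varphi,\varphi](\alpha)\,[\psi,\psi](\alpha)$, which is precisely (ii).
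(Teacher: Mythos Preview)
Your argument is correct. However, the paper does not actually supply a proof of this proposition: it is stated without proof as a recollection of ``basic properties of the bracket from \cite{HSWW}''. So there is no proof in the paper to compare against; you have simply written out a self-contained justification of a cited fact. The approach you take---pushing everything through injectivity of the Fourier transform $L^1(\hat\G)\to C_0(\G)$, invoking Bochner for positivity, and running the standard countable-dense-scalar trick for the pointwise Cauchy--Schwarz---is the natural one and matches how these facts are established in the literature.
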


Our strategy for understanding the translation action of an abelian subgroup in Section \ref{sec:HTIStr} was to apply an isometry that intertwined that action with modulation. We employ the same method here. Our isometry will be based on the following notion. The terminology is our own invention.

\begin{defn}
Let $\pi$ be a representation of $\G$ on a Hilbert space $\H$. A family of vectors $(\theta_i)_{i\in I} \subset \H$ is called \emph{orthogonal generators} for $\pi$ if $\H = \bigoplus_{i\in I} \langle \theta_i \rangle$.
\end{defn}

Every representation admits a family of orthogonal generators, as a well-known consequence of Zorn's Lemma. Normally, the choice of generators is far from unique. For instance, any family of functions $\{f_i\}_{i\in I} \subset L^2(\R)$ for which $\{ \supp \hat{f_i} \}_{I\in I}$ forms a partition of $\R$, is an orthogonal generating family of the regular representation of $\R$. As this example demonstrates, orthogonal generators abound, and the cardinality of the indexing set $I$ can change dramatically from family to family. 

In an abstract sense, the lack of a canonical family of orthogonal generators might seem annoying, but in a practical sense, it is an advantage. In what follows, we analyze a dual integrable representation in terms of its bracket, a family $(\theta_i)_{i\in I}$ of orthogonal generators, and $l^2(I)$. The abundance of orthogonal generating families only makes this analysis more flexible.

For the remainder of the paper, we fix a family $(\theta_i)_{i\in I} \subset \H$ of orthogonal generators for $\pi$. For $i\in I$, we denote
\[ \Omega_i = \{ \alpha \in \hat{\G} : [\theta_i,\theta_i] \neq 0\}. \]
We also write $\delta_i \in l^2(I)$ for the standard basis element corresponding to $i\in I$.

The next proposition is Corollary (3.2) of \cite{HSWW}. The corollary after it was partially explained in the proof of \cite[Corollary (3.4)]{HSWW}.

\begin{prop}\label{prop:CycIsom}
Let $\psi \in \H$, and denote
\begin{equation}\label{eq:OmegaPsi}
\Omega_\psi = \{ \alpha \in \hat{\G} : [\psi,\psi](\alpha) \neq 0\},
\end{equation}
which is well defined up to a set of measure zero. The function $T_\psi \colon \langle \psi \rangle \to  L^2(\hat{\G})$ given by
\begin{equation}\label{eq:BrackIsomDef}
T_\psi(\varphi) = \mathbf{1}_{\Omega_\psi} \frac{[\varphi,\psi]}{[\psi,\psi]^{1/2}} \quad \text{for }\varphi \in \langle \psi \rangle \subset \H
\end{equation}
maps $\langle \psi \rangle$ unitarily onto $L^2(\Omega_\psi, \mu_{\hat{\G}})$.
\end{prop}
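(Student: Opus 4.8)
The plan is to show that $T_\psi$ is a well-defined bounded linear map which is isometric and has dense range in $L^2(\Omega_\psi,\mu_{\hat\G})$; since an isometry has closed range, this forces $T_\psi$ to be a unitary onto $L^2(\Omega_\psi)$. First I would record the identity $\Norm{\varphi}^2 = \int_{\hat\G}[\varphi,\varphi]\,d\mu_{\hat\G}$, obtained from the defining relation of the bracket evaluated at the identity $x=e$ (where $\alpha(e)=1$). Well-definedness and boundedness then follow from the Cauchy--Schwarz bound in Proposition \ref{prop:BrackProp}(ii): on $\Omega_\psi$ we have $|[\varphi,\psi]|^2/[\psi,\psi]\le[\varphi,\varphi]$ a.e., so integrating yields $\Norm{T_\psi\varphi}_{L^2}^2 \le \int_{\hat\G}[\varphi,\varphi]\,d\mu_{\hat\G} = \Norm{\varphi}^2$. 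Thus the formula \eqref{eq:BrackIsomDef} defines a linear contraction $\langle\psi\rangle\to L^2(\Omega_\psi)$, the range landing in $L^2(\Omega_\psi)$ because of the factor $\mathbf 1_{\Omega_\psi}$.

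Next I would establish that $T_\psi$ is isometric on the dense subspace $D=\spn\{\pi(x)\psi : x\in\G\}$ of $\langle\psi\rangle$. Property (iv) of the bracket gives $[\pi(x)\psi,\psi]=X_x[\psi,\psi]$, hence $T_\psi(\pi(x)\psi)=\mathbf 1_{\Omega_\psi}X_x[\psi,\psi]^{1/2}$. For a finite combination $\varphi=\sum_j c_j\pi(x_j)\psi$, the defining relation of the bracket together with (iv) produces $\langle\pi(x_j)\psi,\pi(x_k)\psi\rangle = \int_{\hat\G}\alpha(x_j)\overline{\alpha(x_k)}[\psi,\psi](\alpha)\,d\mu_{\hat\G}(\alpha)$, and summing over $j,k$ gives $\Norm{\varphi}^2 = \int_{\hat\G}\big|\sum_j c_j X_{x_j}\big|^2[\psi,\psi]\,d\mu_{\hat\G} = \Norm{T_\psi\varphi}_{L^2}^2$. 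Since $T_\psi$ is a bounded operator agreeing with an isometry on the dense subspace $D$, continuity of both $T_\psi$ and the norm upgrades isometry to all of $\langle\psi\rangle$.

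Finally, for surjectivity, the isometry property makes the range a closed subspace of $L^2(\Omega_\psi)$, so it suffices to show that $\{T_\psi(\pi(x)\psi) : x\in\G\} = \{\mathbf 1_{\Omega_\psi}X_x[\psi,\psi]^{1/2}\}$ has dense span. Suppose $g\in L^2(\Omega_\psi)$ is orthogonal to every such vector. Setting $h=g[\psi,\psi]^{1/2}$ (extended by $0$ off $\Omega_\psi$), Cauchy--Schwarz together with $[\psi,\psi]^{1/2}\in L^2(\hat\G)$ shows $h\in L^1(\hat\G)$, and the orthogonality relations read $\int_{\hat\G}h(\alpha)\overline{\alpha(x)}\,d\mu_{\hat\G}(\alpha)=0$ for all $x\in\G$, i.e.\ the Fourier transform of $h$ vanishes identically on $\hat{\hat\G}=\G$. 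Injectivity of the Fourier transform $L^1(\hat\G)\to C_0(\G)$ (equivalently, the fact that $\D=(X_x)_{x\in\G}$ is a determining set for $L^1(\hat\G)$, cf.\ Remark \ref{rem:CharParDet}) forces $h=0$ a.e.; since $[\psi,\psi]^{1/2}>0$ on $\Omega_\psi$, this gives $g=0$ a.e. Hence the range is dense, and being closed it equals $L^2(\Omega_\psi)$. The step I expect to be the main obstacle is this surjectivity argument, since it requires the correct integrability bookkeeping to land $h$ in $L^1(\hat\G)$ and the appeal to Fourier injectivity; the isometry computation, though the technical heart, is a direct manipulation of the bracket identities.
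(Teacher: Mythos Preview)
Your argument is correct. The paper does not actually supply its own proof of this proposition: it is simply quoted as Corollary~(3.2) of \cite{HSWW}. Your proof follows the natural route---bounding $T_\psi$ via the Cauchy--Schwarz inequality for the bracket, verifying isometry on $\spn\{\pi(x)\psi\}$ using property~(iv), and deducing surjectivity from injectivity of the Fourier transform on $L^1(\hat\G)$---and each step is sound; this is essentially the argument given in \cite{HSWW} as well.
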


\begin{cor} \label{cor:BrackIsom}
The function $T \colon \H \to L^2(\hat{\G};l^2(I))$ given by
\[ T(\varphi)(\alpha) = \left( \mathbf{1}_{\Omega_i}(\alpha) \cdot \frac{[\varphi, \theta_i](\alpha)}{ ( [\theta_i, \theta_i](\alpha) )^{1/2}} \right)_{i\in I} \quad \text{for }\varphi \in \H \text{ and } \alpha \in \hat{\G} \]
is a linear isometry satisfying
\begin{equation}\label{eq:BrackIsomMod}
T(\pi(x) \varphi) = X_x \cdot T(\varphi) \quad \text{for all }\varphi \in \H \text{ and }x \in \G.
\end{equation}

In particular, $T(\H)$ is a $\D$-MI space in $L^2(\hat{\G}; l^2(I))$. The range function $J_0 \colon \hat{\G} \to \{ \text{closed subspaces of }l^2(I)\}$ given by
\[ J_0(\alpha) = \overline{\spn}\{ \mathbf{1}_{\Omega_i}(\alpha) \cdot \delta_i : i \in I \} \]
corresponds to $T(\H)$, in the sense of Proposition \ref{prop:AbstRan}(ii).
\end{cor}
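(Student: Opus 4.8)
The plan is to realize $T$ as the ``direct sum'' of the cyclic isometries $T_{\theta_i}$ furnished by Proposition \ref{prop:CycIsom}, one for each orthogonal generator, under the identification $L^2(\hat{\G}; l^2(I)) \cong \bigoplus_{i\in I} L^2(\hat{\G})$. Since $(\theta_i)_{i\in I}$ generates $\H$ orthogonally, every $\varphi \in \H$ decomposes uniquely as $\varphi = \sum_{i\in I} \varphi_i$ with $\varphi_i \in \langle \theta_i \rangle$ and $\Norm{\varphi}^2 = \sum_{i\in I}\Norm{\varphi_i}^2$. The first key step is the identity $[\varphi,\theta_i] = [\varphi_i,\theta_i]$ a.e.: for $j\neq i$ we have $\varphi_j \perp \langle \theta_i \rangle$, so Proposition \ref{prop:BrackProp}(iii) gives $[\varphi_j,\theta_i]=0$ a.e., and continuity of the bracket in its first argument lets the sum pass through. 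Comparing with \eqref{eq:BrackIsomDef} shows that the $i$-th coordinate of $T(\varphi)$ is exactly $T_{\theta_i}(\varphi_i)$, which is the structural fact underlying everything that follows.

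Granting this decoupling, the isometry property is routine. Using Tonelli's Theorem to interchange the sum over $I$ with the integral over $\hat{\G}$,
\[ \Norm{T(\varphi)}^2 = \sum_{i\in I} \int_{\hat{\G}} |T_{\theta_i}(\varphi_i)(\alpha)|^2\, d\mu_{\hat{\G}}(\alpha) = \sum_{i\in I}\Norm{\varphi_i}^2 = \Norm{\varphi}^2, \]
since each $T_{\theta_i}$ is unitary onto $L^2(\Omega_i,\mu_{\hat{\G}})$; the same computation confirms $T(\varphi)\in L^2(\hat{\G};l^2(I))$. The modulation relation \eqref{eq:BrackIsomMod} follows immediately by applying $[\pi(x)\varphi,\theta_i] = X_x\cdot[\varphi,\theta_i]$ from Proposition \ref{prop:BrackProp}(iv) in each coordinate, which pulls $X_x$ out of every entry. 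Because $T$ is an isometry, $T(\H)$ is complete, hence a closed subspace; and \eqref{eq:BrackIsomMod} gives $X_x\cdot T(\varphi) = T(\pi(x)\varphi)\in T(\H)$ for all $x\in\G$, so $T(\H)$ is $\D$-MI.

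For the range function I would exhibit a convenient generating family and invoke Proposition \ref{prop:AbstRan}(iii). Computing $T(\theta_i)$ explicitly, the off-diagonal brackets $[\theta_i,\theta_j]$ ($j\neq i$) vanish a.e.\ by orthogonality and Proposition \ref{prop:BrackProp}(iii), leaving $T(\theta_i)(\alpha) = \mathbf{1}_{\Omega_i}(\alpha)\,[\theta_i,\theta_i](\alpha)^{1/2}\,\delta_i$, a nonnegative scalar multiple of $\delta_i$ that is nonzero exactly on $\Omega_i$. As $\H$ is separable, $I$ is countable, so $\{T(\theta_i):i\in I\}$ is a countable family, and
\[ S_\D(\{T(\theta_i)\}) = \overline{\spn}\{X_x\cdot T(\theta_i)\} = \overline{\spn}\{T(\pi(x)\theta_i)\} = T(\H), \]
the last equality because $\{\pi(x)\theta_i\}$ spans a dense subspace of $\H$ and $T$ is an isometry onto the closed space $T(\H)$. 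Proposition \ref{prop:AbstRan}(iii) then computes the range function of $T(\H)$ as $\alpha\mapsto\overline{\spn}\{T(\theta_i)(\alpha):i\in I\}$, which equals $\overline{\spn}\{\mathbf{1}_{\Omega_i}(\alpha)\,\delta_i:i\in I\}=J_0(\alpha)$ since deleting the positive scalar $[\theta_i,\theta_i](\alpha)^{1/2}$ leaves the span unchanged; thus $T(\H)=M_{J_0}$ in the sense of Proposition \ref{prop:AbstRan}(ii).

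I expect the only delicate point to be the bookkeeping of null sets: the identities $[\varphi,\theta_i]=[\varphi_i,\theta_i]$ and $T(\theta_i)=\mathbf{1}_{\Omega_i}[\theta_i,\theta_i]^{1/2}\delta_i$ hold only almost everywhere, and the final span computation requires passing from ``for each $i$, a.e.\ $\alpha$'' to ``a.e.\ $\alpha$, for all $i$.'' Since $I$ is countable, a countable union of null sets is null, so these statements combine cleanly, and no genuine obstruction arises. The substantive input is entirely the orthogonality of the generators, which through Proposition \ref{prop:BrackProp}(iii) forces the off-diagonal brackets to vanish and makes $T$ decouple into the pieces $T_{\theta_i}$.
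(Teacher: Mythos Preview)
Your proof is correct and follows essentially the same approach as the paper: decompose $\varphi$ along the orthogonal cyclic pieces $\langle\theta_i\rangle$, use Proposition \ref{prop:BrackProp}(iii) to kill the off-diagonal brackets so that the $i$-th coordinate of $T(\varphi)$ equals $T_{\theta_i}(\varphi_i)$, then invoke Proposition \ref{prop:CycIsom} for the isometry, Proposition \ref{prop:BrackProp}(iv) for the modulation relation, and Proposition \ref{prop:AbstRan}(iii) applied to the generators $T(\theta_i)$ for the range function. Your appeal to ``continuity of the bracket'' to pass the infinite sum through is unnecessary (though correct): the paper simply writes $\varphi = P_i\varphi + (1-P_i)\varphi$ and uses linearity, which avoids the issue entirely; you could do the same by noting $\varphi - \varphi_i \perp \langle\theta_i\rangle$ directly.
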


\begin{proof}
For each $i \in I$, let $P_i \colon \H \to \langle \theta_i \rangle$ be orthogonal projection, and let $T_i=T_{\theta_i}\colon \langle \theta_i \rangle \to L^2(\hat{\G})$ be the map from Proposition \ref{prop:CycIsom}. Given $\varphi \in \H$ and $i \in I$, Proposition \ref{prop:BrackProp} implies that
\[ [ \varphi, \theta_i](\alpha) = [P_i \varphi, \theta_i ](\alpha) + [(1-P_i) \varphi, \theta_i ](\alpha) = [P_i \varphi, \theta_i ](\alpha). \]
Consequently,
\[ T_i P_i \varphi = \mathbf{1}_{\Omega_i} \frac{[\varphi, \theta_i]}{([\theta_i, \theta_i])^{1/2}}; \]
in other words,
\[ T(\varphi)(\alpha) = ( (T_i P_i \varphi)(\alpha) )_{i\in I}. \]
By Proposition \ref{prop:CycIsom}, $T$ maps the spaces $\langle \theta_i \rangle\subset \H$ isometrically into orthogonal subspaces of $L^2(\hat{G}; l^2(I))$. Since $\H = \bigoplus_{i\in I} \langle \theta_i \rangle$, $T$ is a linear isometry. Proposition \ref{prop:BrackProp}(iv) gives \eqref{eq:BrackIsomMod}.

For $i,j \in I$, Proposition \ref{prop:BrackProp} quickly implies that $[\theta_i, \theta_j] = \delta_{i,j} \cdot [\theta_i, \theta_i]$, where $\delta_{i,j}$ is the Kronecker-delta. Thus,
\begin{equation}\label{eq:BrackIsomGen}
T(\theta_i)(\alpha) = ( [\theta_i, \theta_i](\alpha) )^{1/2} \cdot \delta_i.
\end{equation}
Since $\H = \overline{\spn}\{ \pi(x) \theta_i : x \in \G, i \in I\}$, we have, in the language of Section \ref{sec:frames},
\[ T(\H) = \overline{\spn}\{ T(\pi(x) \theta_i) : x\in \G, i \in I\} = \overline{\spn}\{ X_x \cdot T(\theta_i) : x\in \G, i \in I\} = E_\D( \{T(\theta_i)\}_{i\in I}). \]
By Proposition \ref{prop:AbstRan}(iii), the range function associated with $T(\H)$ is
\[ J_0(\alpha) = \overline{\spn}\{ T(\theta_i)(\alpha) : i \in I \} =\overline{\spn}\{ \mathbf{1}_{\Omega_i}(\alpha)\cdot \delta_i : i\in I\}. \]
\end{proof}

A closed subspace $M \subset \H$ is called \emph{$\pi$-invariant} if $\pi(x) \varphi \in M$ whenever $\varphi \in M$ and $x\in \G$. The restriction of each $\pi(x)$ to $M$ gives the \emph{subrepresentation} of $\pi$ on $M$. The subrepresentation is also dual integrable, with the same bracket. The next theorem classifies $\pi$-invariant subspaces of $\H$ in terms of range functions.

Given a range function $J\colon \hat{\G} \to \{\text{closed subspaces of }l^2(I)\}$, we denote $P_J(\alpha) \colon l^2(I) \to J(\alpha)$ for the orthogonal projection associated to $\alpha \in \hat{\G}$. We also write
\[ V_J = \{ \varphi \in \H : (T\varphi)(\alpha) \in J(\alpha) \text{ for a.e.\ }\alpha \in \hat{\G}\}. \]
We call two range functions \emph{equivalent} when they agree a.e.\ on $\hat{\G}$.

Given a family $\A \subset \H$, we write
\[ E(\A) = \{\pi(x) \varphi : x \in \G, \varphi \in \A\} \]
for its orbit under $\pi$, and
\[ S(\A) = \overline{\spn}\{ \pi(x) \varphi : x \in \G, \varphi \in \A\} \]
for the $\pi$-invariant space it generates.

\begin{theorem}\label{thm:DualClas}
Let $J_0$ be as in Corollary \ref{cor:BrackIsom}. The $\pi$-invariant subspaces of $\H$ are indexed by equivalence classes of measurable range functions $J \colon \hat{\G} \to \{\text{closed subspaces of }l^2(I)\}$ satisfying 
\begin{equation} \label{eq:RanCon}
J(\alpha) \subset J_0(\alpha) \quad \text{for a.e.\ }\alpha \in \hat{\G}.
\end{equation}
A bijection maps $J \mapsto V_J$. 

If $\A \subset \H$ has a countable dense subset $\A_0 \subset \A$, then the range function $J\colon \hat{\G} \to \{\text{closed subspaces of }l^2(I)\}$ given by
\begin{equation}\label{eq:DualClas2}
J(\alpha) = \overline{\spn}\{ (T\varphi)(\alpha) : \varphi \in \A_0\}.
\end{equation}
satisfies
\[ V_J = S(\A_0) = S(\A). \]
\end{theorem}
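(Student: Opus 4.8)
The plan is to transport the entire statement through the isometry $T$ of Corollary \ref{cor:BrackIsom} and then invoke Proposition \ref{prop:AbstRan}(iii) on the space $L^2(\hat{\G}; l^2(I))$. The first thing I would record is the soft functional-analytic input: because $T$ is a linear isometry, it is injective, it sends closed subspaces of $\H$ to closed subspaces of $L^2(\hat{\G}; l^2(I))$ (complete sets map to complete sets), and consequently $T(\overline{\spn}\, S) = \overline{\spn}\, T(S)$ for any $S \subset \H$. Since $\A_0$ is dense in $\A$ and $T$ preserves norms, $T(\A_0)$ is a countable dense subset of $T(\A)$.

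Next I would identify the image of the generated subspace. Using the intertwining relation \eqref{eq:BrackIsomMod}, namely $T(\pi(x)\varphi) = X_x \cdot T(\varphi)$, together with $\D = (X_x)_{x\in\G}$, I obtain
\[ T(S(\A)) = \overline{\spn}\{ T(\pi(x)\varphi) : x \in \G,\, \varphi \in \A\} = \overline{\spn}\{ X_x \cdot T(\varphi) : x \in \G,\, \varphi \in \A\} = S_\D(T(\A)), \]
and, identically, $T(S(\A_0)) = S_\D(T(\A_0))$.

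I would then apply Proposition \ref{prop:AbstRan}(iii) to the family $T(\A) \subset L^2(\hat{\G}; l^2(I))$ with countable dense subset $T(\A_0)$. The range function it produces, namely $\alpha \mapsto \overline{\spn}\{ \psi(\alpha) : \psi \in T(\A_0)\} = \overline{\spn}\{ (T\varphi)(\alpha) : \varphi \in \A_0\}$, is exactly the $J$ of \eqref{eq:DualClas2}. The proposition therefore gives $S_\D(T(\A_0)) = S_\D(T(\A)) = M_J$, where $M_J = \{\psi \in L^2(\hat{\G}; l^2(I)) : \psi(\alpha) \in J(\alpha) \text{ a.e.}\}$, and combining with the previous paragraph yields $T(S(\A)) = T(S(\A_0)) = M_J$. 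I would also note, to tie back to the first half of the theorem, that each $T(\varphi)$ lies in $T(\H) = M_{J_0}$, whence $(T\varphi)(\alpha) \in J_0(\alpha)$ a.e.\ and so $J(\alpha) \subset J_0(\alpha)$ a.e., confirming that $J$ is admissible.

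Finally I would pull back through $T$. By definition a vector $\varphi \in \H$ belongs to $V_J$ precisely when $T\varphi \in M_J$, i.e.\ $V_J = T^{-1}(M_J)$ (preimage); since $T$ is injective and $T(S(\A)) = M_J$, this forces $S(\A) = T^{-1}(M_J) = V_J$, and the same computation with $\A_0$ in place of $\A$ gives $S(\A_0) = V_J$, completing the chain $V_J = S(\A_0) = S(\A)$. I expect the only delicate point to be the step $T(\overline{\spn}\, S) = \overline{\spn}\, T(S)$ and, relatedly, that an isometry carries closed subspaces to closed subspaces — this is what guarantees the spans coincide exactly rather than one merely containing the other; every remaining step is bookkeeping built on Corollary \ref{cor:BrackIsom} and Proposition \ref{prop:AbstRan}(iii).
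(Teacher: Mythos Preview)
Your approach is correct and essentially identical to the paper's: transport everything through the isometry $T$ of Corollary~\ref{cor:BrackIsom} and invoke Proposition~\ref{prop:AbstRan}. The only omission is that you detail only the second claim; the bijection $J \mapsto V_J$ in the first claim needs part (ii) of Proposition~\ref{prop:AbstRan} (not just (iii)) together with the observation that $M_J \subset T(\H) = M_{J_0}$ if and only if $J(\alpha) \subset J_0(\alpha)$ a.e., which is exactly how the paper handles it.
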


\begin{proof}
By Corollary \ref{cor:BrackIsom}, $E \mapsto T(E)$ is a bijection between closed $\pi$-invariant subspaces of $\H$ and $\D$-MI spaces contained in $T(\H)$. Moreover, $E=V_J$ if and only if $T(E) = M_J$, in the language of Proposition \ref{prop:AbstRan}. Obviously $M_J \subset T(\H) = M_{J_0}$ if and only if $J$ satisfies \eqref{eq:RanCon}, so the theorem is a consequence of Proposition \ref{prop:AbstRan} and Remark \ref{rem:CharParDet}.
\end{proof}

Representations of LCA groups are uniquely determined by associated projection-valued measures on the dual group. For background, we refer the reader to Folland \cite[Sections 1.4 and 4.4]{F}. Hern{\'a}ndez et al.\ \cite[Corollary (2.5)]{HSWW} have given the projection-valued measure associated with a dual integrable representation, in terms of the bracket. The next proposition gives the projection-valued measure associated with an invariant subspace of a dual integrable representation, in terms of $T$.

\begin{prop}
Let $J\colon \hat{\G} \to \{ \text{closed subspaces of }l^2(I)\}$ be a measurable range function satisfying \eqref{eq:RanCon}. For each $E\subset \hat{\G}$, define a projection $P(E)$ on $V_J$ by the formula
\[ T(P(E) \varphi) = \mathbf{1}_E\cdot T(\varphi). \]
Then $P$ is a regular $V_J$-projection-valued measure on $\hat{\G}$, and the subrepresentation of $\pi$ on $V_J$ is given by
\[ \pi(x) = \int_{\hat{\G}} \alpha(x)\, dP(\alpha). \]
\end{prop}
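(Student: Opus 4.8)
The plan is to transport every assertion through the unitary $T|_{V_J}$. By Theorem \ref{thm:DualClas} this map carries $V_J$ onto the $\D$-MI subspace $M_J \subset L^2(\hat{\G};l^2(I))$, and I will recognize $P$ as the pullback under $T$ of the elementary ``multiplication by $\mathbf{1}_E$'' projection-valued measure on $M_J$. First I would check that $P(E)$ is well defined on $V_J$. For $\varphi \in V_J$ we have $(T\varphi)(\alpha) \in J(\alpha)$ for a.e.\ $\alpha$, so $(\mathbf{1}_E\cdot T\varphi)(\alpha)$ equals either $(T\varphi)(\alpha)\in J(\alpha)$ or $0\in J(\alpha)$; hence $\mathbf{1}_E\cdot T\varphi \in M_J = T(V_J)$. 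Since $T$ restricts to a unitary $V_J \to M_J$, there is a unique $P(E)\varphi \in V_J$ with $T(P(E)\varphi) = \mathbf{1}_E\cdot T\varphi$, and writing $Q(E)$ for multiplication by $\mathbf{1}_E$ on $M_J$ we have $P(E) = T^{-1}Q(E)T$.

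Next I would verify the projection-valued-measure axioms. Because $T$ is unitary, it suffices to check them for the operators $Q(E)$, and these are immediate: $\mathbf{1}_E$ is real-valued and idempotent, so each $Q(E)$ is an orthogonal projection; $Q(\emptyset)=0$ and $Q(\hat{\G})=\mathrm{id}$; the identity $\mathbf{1}_{E\cap F}=\mathbf{1}_E\mathbf{1}_F$ gives $Q(E\cap F)=Q(E)Q(F)$; and for pairwise disjoint $(E_n)$ the partial sums $\sum_{k\le n}\mathbf{1}_{E_k}\cdot T\varphi$ converge in $L^2$-norm to $\mathbf{1}_{\bigcup_n E_n}\cdot T\varphi$ by dominated convergence (the integrand is dominated by $\Norm{(T\varphi)(\alpha)}^2$), which is strong-operator countable additivity. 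Conjugating by $T$ transports all of this to $P$ on $V_J$. For regularity I would compute, for $u,v\in V_J$,
\[ \langle P(E)u, v\rangle = \langle \mathbf{1}_E\cdot Tu,\, Tv\rangle = \int_E \langle (Tu)(\alpha),(Tv)(\alpha)\rangle_{l^2(I)}\, d\mu_{\hat{\G}}(\alpha), \]
so the complex measure $E\mapsto \langle P(E)u,v\rangle$ is absolutely continuous with respect to $\mu_{\hat{\G}}$, with density $\alpha\mapsto \langle (Tu)(\alpha),(Tv)(\alpha)\rangle$ lying in $L^1(\hat{\G})$ by Cauchy--Schwarz. As $\hat{\G}$ is second countable and $\mu_{\hat{\G}}$ is regular, every finite measure absolutely continuous to it is regular, so $P$ is a regular $V_J$-projection-valued measure.

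Finally, for the spectral integral I would invoke the intertwining relation \eqref{eq:BrackIsomMod}, namely $T(\pi(x)u)=X_x\cdot T(u)$ with $X_x(\alpha)=\alpha(x)$. For $u,v\in V_J$ the scalar $\alpha(x)$ factors out of the inner product (the bracket formula $\langle\varphi,\pi(x)\psi\rangle=\int[\varphi,\psi]\overline{\alpha(x)}\,d\mu_{\hat{\G}}$ fixes this convention), giving
\[ \langle \pi(x)u, v\rangle = \langle X_x\cdot Tu,\, Tv\rangle = \int_{\hat{\G}} \alpha(x)\,\langle (Tu)(\alpha),(Tv)(\alpha)\rangle\, d\mu_{\hat{\G}}(\alpha) = \int_{\hat{\G}} \alpha(x)\, d\langle P(\alpha)u, v\rangle, \]
the last step by the density formula just derived. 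Since this holds for all $u,v\in V_J$, we conclude $\pi(x)=\int_{\hat{\G}}\alpha(x)\,dP(\alpha)$ on $V_J$.

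The overwhelming majority of the work is routine transport through $T$; the one point requiring genuine care is the well-definedness step, that $\mathbf{1}_E\cdot T\varphi$ remains in $T(V_J)$, which rests on $M_J$ being a pointwise (range-function) subspace together with the identification $T(V_J)=M_J$ from Theorem \ref{thm:DualClas}. The only external measure-theoretic input I expect to need is the fact that a finite measure absolutely continuous to a regular measure on a second countable space is itself regular, used to establish regularity of $P$.
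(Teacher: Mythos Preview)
Your proof is correct and follows the same overall strategy as the paper: transport everything through the isometry $T$ onto $M_J \subset L^2(\hat{\G};l^2(I))$, compute the scalar measures $\langle P(E)u,v\rangle$ as integrals against $\mu_{\hat{\G}}$, and read off the spectral formula. Where you differ is in the final step and in the level of detail. The paper's proof is much terser: it asserts the identity $\langle (T\varphi)(\alpha),(T\psi)(\alpha)\rangle_{l^2(I)} = [\varphi,\psi](\alpha)$, so that $dP_{\varphi,\psi} = [\varphi,\psi]\,d\mu_{\hat{\G}}$, and then invokes \cite[Corollary (2.5)]{HSWW} to obtain $\langle \pi(x)\varphi,\psi\rangle = \int_{\hat{\G}} \alpha(x)\,dP_{\varphi,\psi}(\alpha)$. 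You instead use the intertwining relation \eqref{eq:BrackIsomMod} directly, which makes your argument self-contained and avoids both the bracket identity and the external citation. You also spell out well-definedness of $P(E)$, the projection-valued-measure axioms, and regularity, all of which the paper leaves implicit. Your route is a bit longer but arguably cleaner; the paper's is shorter but leans on \cite{HSWW} and on an unproved (though true) pointwise identity between $\langle T\varphi,T\psi\rangle_{l^2(I)}$ and the bracket.
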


\begin{proof}
For each $\varphi, \psi \in \H$, define a complex-valued measure $P_{\varphi,\psi}$ on $\hat{\G}$ with the formula
\[ P_{\varphi, \psi}(E) = \langle P(E) \varphi, \psi \rangle = \langle \mathbf{1}_E\cdot T \varphi, T \psi \rangle = \int_{\hat{G}} \mathbf{1}_E(\alpha)\cdot \langle (T \varphi)(\alpha), (T \psi)(\alpha) \rangle\, d\mu_{\hat{\G}}(\alpha) = \int_{\hat{G}} \mathbf{1}_E(\alpha) \cdot [ \varphi, \psi ](\alpha)\, d\mu_{\hat{\G}}(\alpha). \]
In other words, $dP_{\varphi, \psi} = [\varphi, \psi]\, d\mu_{\hat{\G}}$. By Corollary (2.5) of \cite{HSWW},
\[ \langle \pi(x) \varphi, \psi \rangle = \int_{\hat{\G}} \alpha(x)\cdot [\varphi, \psi](\alpha)\, d\mu_{\hat{\G}}(\alpha) = \int_{\hat{\G}} \alpha(x)\, dP_{\varphi,\psi}(\alpha). \]
This completes the proof.
\end{proof}

We now give the main results of this section, reducing frame and Riesz basis conditions on the orbit of a family $\A \subset \H$ to pointwise conditions on the fibers $J(\alpha)$ from \eqref{eq:DualClas2}. In the special case of a discrete LCA group with a cyclic dual integrable representation, the next two theorems were given by Hern{\'a}ndez et al.\ \cite[Proposition (5.3) and Theorem (5.7)]{HSWW}.

\begin{theorem}\label{thm:BrackFrame}
Let $(\mathcal{M},\mu_\mathcal{M})$ be a complete, $\sigma$-finite measure space, and let $\A = (\varphi_t)_{t\in \mathcal{M}} \subset \H$ be a family of vectors such that, for each $i\in I$, the function
\[ (t, \alpha) \mapsto [\varphi_t,\theta_i](\alpha) \]
is measurable on $\mathcal{M} \times \hat{\G}$. Let $\A_0 \subset \A$ be a countable dense subset, and let $J$ be as in \eqref{eq:DualClas2}. For constants $0 < A \leq B < \infty$, the following are equivalent:
\begin{enumerate}[(i)]
\item $E(\A)$ is a continuous frame for $S(\A)$ with bounds $A,B$. That is,
\[ A \Norm{\psi}^2 \leq \int_\mathcal{M} \int_\G | \langle \psi, \pi(x) \varphi_t \rangle |^2\, d\mu_{\G}(x)\, d\mu_\mathcal{M}(t) \leq B \Norm{\psi}^2 \]
for all $\psi \in S(\A)$.
\item For a.e.\ $\alpha \in \hat{\G}$, $\{P_J(\alpha)[T \varphi_t(\alpha)] : t \in \mathcal{M}\}$ is a continuous frame for $J(\alpha)$ with bounds $A,B$. In other words,
\[ A \Norm{v}_{l^2(I)}^2 \leq \int_\mathcal{M} | \langle v, T \varphi_t(\alpha) \rangle_{l^2(I)} |^2\, d\mu_\mathcal{M}(t) \leq B \Norm{v}_{l^2(I)}^2 \]
for a.e.\ $\alpha \in \hat{\G}$ and all $v \in J(\alpha)$.
\end{enumerate}
\end{theorem}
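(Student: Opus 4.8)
The plan is to push the entire problem through the isometry $T\colon\H\to L^2(\hat{\G};l^2(I))$ of Corollary \ref{cor:BrackIsom} and then quote Theorem \ref{thm:frame}, in exact analogy with the deduction of Theorem \ref{thm:HTIFrame}. Recall that $T$ is a linear isometry satisfying $T(\pi(x)\varphi)=X_x\cdot T(\varphi)$, and that $\D=(X_x)_{x\in\G}$ is a Parseval determining set for $L^1(\hat{\G})$ by Remark \ref{rem:CharParDet}. In the language of Theorem \ref{thm:frame} I would take the base space to be $\hat{\G}$, the fiber Hilbert space to be $l^2(I)$, the determining set $\D$ indexed by $\G$, and the generating family to be $T\A=(T\varphi_t)_{t\in\mathcal{M}}$ indexed by $\mathcal{M}$; note that $l^2(I)$ is separable because the separability of $\H=\bigoplus_{i\in I}\langle\theta_i\rangle$ forces $I$ to be countable.

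The one step requiring genuine work is to verify that $T\A$ is jointly measurable; this is the expected main obstacle. Here I would use the coordinate formula from Corollary \ref{cor:BrackIsom},
\[ \langle (T\varphi_t)(\alpha),\delta_i\rangle=\mathbf{1}_{\Omega_i}(\alpha)\,\frac{[\varphi_t,\theta_i](\alpha)}{([\theta_i,\theta_i](\alpha))^{1/2}}, \]
whose numerator is measurable on $\mathcal{M}\times\hat{\G}$ by hypothesis and whose indicator and denominator are measurable on $\hat{\G}$. Taking $\Phi\colon\mathcal{M}\times\hat{\G}\to l^2(I)$ to be this formula applied coordinatewise, each function $(t,\alpha)\mapsto\langle\Phi(t,\alpha),\delta_i\rangle$ is measurable, and hence so is $(t,\alpha)\mapsto\langle\Phi(t,\alpha),u\rangle=\sum_{i\in I}\overline{u_i}\langle\Phi(t,\alpha),\delta_i\rangle$ for every $u=(u_i)_{i\in I}\in l^2(I)$, being a countable sum of measurable functions. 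By Pettis's Measurability Theorem this is exactly condition (ii) of joint measurability, while condition (i) holds because $\Phi(t,\cdot)$ is by construction a representative of $T\varphi_t$ for each $t$.

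With measurability secured, the equivalence is essentially a transcription. Since $T$ is isometric and maps $S(\A)$ onto $S_\D(T\A)$ while intertwining $\pi(x)$ with multiplication by $X_x$, for every $\psi\in S(\A)$ we have $\Norm{\psi}=\Norm{T\psi}$ and $\langle\psi,\pi(x)\varphi_t\rangle=\langle T\psi,X_x\cdot T\varphi_t\rangle$. Therefore condition (i) is literally the assertion that $E_\D(T\A)$ is a continuous frame for $S_\D(T\A)$ over $\G\times\mathcal{M}$ with bounds $A,B$ (the order of the two integrations being immaterial by Tonelli's Theorem). By Proposition \ref{prop:AbstRan} together with Theorem \ref{thm:DualClas}, the range function attached to $S_\D(T\A)$ is precisely the $J$ of \eqref{eq:DualClas2}. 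Applying the equivalence (i)$\iff$(ii) of Theorem \ref{thm:frame} then translates this frame condition into the fiberwise statement of its part (ii), which is exactly condition (ii) of the present theorem.

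A small point to keep in mind throughout is that $T$ is an isometry onto the proper subspace $T(\H)=M_{J_0}$ rather than onto all of $L^2(\hat{\G};l^2(I))$; this causes no trouble, since both $S(\A)$ and every fiber $J(\alpha)\subset J_0(\alpha)$ live inside this range, which is precisely the setting in which Theorem \ref{thm:frame} is applied.
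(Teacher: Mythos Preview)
Your proposal is correct and follows essentially the same route as the paper: push everything through $T$, verify joint measurability of $T\A$ via the coordinate formula, and invoke Theorem \ref{thm:frame} with the Parseval determining set $\D=(X_x)_{x\in\G}$. The only cosmetic difference is that the paper establishes joint measurability by citing Corollary \ref{cor:JointMeas} (viewing $l^2(I)$ as $L^2(I)$ with counting measure) rather than invoking Pettis directly as you do; both arguments amount to the same thing.
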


\begin{proof}
By Corollary \ref{cor:BrackIsom}, the linear isometry $T \colon \H \to L^2(\hat{\G}; l^2(I))$ maps $S(\A)$ unitarily onto $S_\D(T \A)$, sending $E(\A)$ to $E_\D(T \A)$. For each $i\in I$, the function
\[ (t, \alpha) \mapsto \mathbf{1}_{\Omega_i}(\alpha) \cdot \frac{[\varphi_t,\theta_i](\alpha)}{([\theta_i,\theta_i](\alpha))^{1/2}} \]
is measurable on $\mathcal{M} \times \hat{\G}$. Therefore
\[ (t, \alpha, i) \mapsto ([T \varphi_t](\alpha))_i \]
is measurable on $\mathcal{M} \times \hat{\G} \times I$. By Corollary \ref{cor:JointMeas}, the family $T \A = (T \varphi_t)_{t \in \mathcal{M}} \subset L^2(\hat{\G};l^2(I))$ is jointly measurable. The theorem now follows immediately from Theorem \ref{thm:frame} and Remark \ref{rem:CharParDet}.
\end{proof}

\begin{theorem} \label{thm:BrackRiesz}
In addition to the standing assumptions, suppose that $\G$ is discrete. For a countable family $\A \subset \H$ and constants $0 < A \leq B < \infty$, the following are equivalent:
\begin{enumerate}[(i)]
\item $E(\A)$ forms a Riesz basis for $S(\A)$ with bounds $A,B$.
\item For a.e.\ $\alpha \in \hat{\G}$, $\{ T \varphi(\alpha) : \varphi \in \A\}$ forms a Riesz sequence in $l^2(I)$ with bounds $A,B$.
\end{enumerate}
\end{theorem}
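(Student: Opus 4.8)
The plan is to follow the template set by the proof of Theorem \ref{thm:BrackFrame}, transporting everything through the isometry $T$ of Corollary \ref{cor:BrackIsom} and then invoking Theorem \ref{thm:Riesz} in place of Theorem \ref{thm:frame}. The discreteness of $\G$ is what makes the Riesz-basis version available, so I would begin by extracting its consequences. Since discrete abelian groups are dual to compact abelian groups, $\hat{\G}$ is compact; with $\mu_{\G}$ taken to be counting measure and $\mu_{\hat{\G}}$ its dual, we have $\mu_{\hat{\G}}(\hat{\G}) = 1$, which supplies the finite-measure hypothesis $\mu(X) < \infty$ of Theorem \ref{thm:Riesz}. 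Furthermore, the characters of a compact abelian group form an orthonormal basis of its $L^2$ space, so $\D = (X_x)_{x\in\G}$ is an orthonormal basis of $L^2(\hat{\G})$. Note that, in contrast with Theorem \ref{thm:BrackFrame}, no joint-measurability machinery is needed here, since Theorem \ref{thm:Riesz} applies to any countable family and $\A$ is countable by hypothesis.

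Next I would transport the problem via $T$. By Corollary \ref{cor:BrackIsom}, the map $T\colon \H \to L^2(\hat{\G}; l^2(I))$ is a linear isometry satisfying $T(\pi(x)\varphi) = X_x \cdot T(\varphi)$, so it carries $E(\A)$ bijectively onto $E_\D(T\A)$ and, being isometric with closed range, maps $S(\A)$ onto $S_\D(T\A) = T(S(\A))$. Because $T$ preserves inner products and norms, $E(\A)$ is a Riesz sequence in $S(\A)$ with bounds $A,B$ if and only if $E_\D(T\A)$ is a Riesz sequence in $S_\D(T\A)$ with the \emph{same} bounds. Moreover, $E(\A)$ spans $S(\A)$ densely by the definition of $S(\A)$ as a closed span, and likewise $E_\D(T\A)$ spans $S_\D(T\A)$ densely; hence in both settings ``Riesz sequence'' and ``Riesz basis for the generated subspace'' coincide. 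Thus condition (i) is equivalent to the assertion that $E_\D(T\A)$ is a Riesz basis with bounds $A,B$.

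Finally I would apply Theorem \ref{thm:Riesz} with $X = \hat{\G}$, Hilbert space $l^2(I)$, the countable family $T\A$, and the orthonormal basis $\D$. Condition (i) of that theorem, read for this specific $\D$, is precisely that $E_\D(T\A)$ is a Riesz sequence with bounds $A,B$, while its condition (v) states that for a.e.\ $\alpha \in \hat{\G}$ the set $\{ (T\varphi)(\alpha) : \varphi \in \A\}$ is a Riesz sequence in $l^2(I)$ with bounds $A,B$ --- which is exactly condition (ii) of the present theorem. Chaining the equivalence (i) $\Leftrightarrow$ (v) of Theorem \ref{thm:Riesz} with the isometric dictionary of the previous paragraph then yields (i) $\Leftrightarrow$ (ii), and the proof is complete. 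The only points requiring genuine care are the identification of $\D$ as an orthonormal basis of $L^2(\hat{\G})$ (so that Theorem \ref{thm:Riesz}(i) is applicable with this particular $\D$) and the verification that the isometry $T$ transfers the Riesz-basis property with bounds unchanged; both are standard but must be stated, since the whole argument hinges on them.
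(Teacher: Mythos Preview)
Your proposal is correct and follows exactly the route the paper indicates: the paper's proof is the single sentence ``It follows from Theorem \ref{thm:Riesz} in the same way that Theorem \ref{thm:BrackFrame} followed from Theorem \ref{thm:frame},'' and you have simply written out that template in full, including the observation that discreteness of $\G$ makes $\hat{\G}$ compact with probability measure (so Theorem \ref{thm:Riesz} applies) and that $\D$ is then an orthonormal basis of $L^2(\hat{\G})$.
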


\begin{proof}
It follows from Theorem \ref{thm:Riesz} in the same way that Theorem \ref{thm:BrackFrame}
 followed from Theorem \ref{thm:frame}.
\end{proof}

For completeness, we mention the following combination of Lemma (2.8) and Proposition (5.1) in \cite{HSWW}.

\begin{prop}
In addition to the standing assumptions, assume that $\G$ is discrete. For a family $\A = (\theta_i)_{i\in I}\subset \H$, $E(\A)$ is an orthonormal sequence if and only if  $[\theta_i, \theta_j]= \delta_{i,j}$ a.e.
\end{prop}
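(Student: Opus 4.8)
The plan is to convert the orthonormality of $E(\A) = \{\pi(x)\theta_i : x\in\G,\ i\in I\}$ into a statement about the Fourier coefficients of the brackets $[\theta_i,\theta_j]$, and then recover the brackets from those coefficients by injectivity of the Fourier transform. First I would record the two features of the discrete setting that drive the argument: since $\G$ is discrete with counting measure, its dual $\hat\G$ is compact with $\mu_{\hat\G}(\hat\G)=1$, and by Pontryagin duality the characters $X_z(\alpha)=\alpha(z)$, $z\in\G$, form an orthonormal system in $L^2(\hat\G)$. Concretely, $\int_{\hat\G}\alpha(z)\,d\mu_{\hat\G}(\alpha)$ equals $1$ when $z$ is the identity of $\G$ and $0$ otherwise.

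Next, for $x,y\in\G$ and $i,j\in I$, I would compute the inner product using unitarity of $\pi$ together with the dual-integrability relation and the Hermitian symmetry of the bracket:
\[ \langle \pi(x)\theta_i,\,\pi(y)\theta_j\rangle = \langle \theta_i,\,\pi(x^{-1}y)\theta_j\rangle = \int_{\hat\G} [\theta_i,\theta_j](\alpha)\,\overline{\alpha(x^{-1}y)}\; d\mu_{\hat\G}(\alpha). \]
This displays the inner product as the value at $x^{-1}y$ of the Fourier transform of $[\theta_i,\theta_j]\in L^1(\hat\G)$, under the identification $\G=\hat{\hat\G}$. (One could equally invoke Proposition \ref{prop:BrackProp}(iv) to pull $X_x$ out of the bracket, but the direct computation is cleanest.)

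For the forward direction, assume $E(\A)$ is orthonormal. Taking $i=j$ and letting $z=x^{-1}y$ range over $\G$ shows that the Fourier transform of $[\theta_i,\theta_i]$ takes the value $1$ at the identity and $0$ elsewhere, hence agrees with the Fourier transform of the constant function $1$; taking $i\neq j$ shows that the Fourier transform of $[\theta_i,\theta_j]$ vanishes identically. Since the Fourier transform $L^1(\hat\G)\to C_0(\G)$ is injective, it follows that $[\theta_i,\theta_i]=1$ and $[\theta_i,\theta_j]=0$ a.e., i.e.\ $[\theta_i,\theta_j]=\delta_{i,j}$ a.e. The converse is a direct substitution: if $[\theta_i,\theta_j]=\delta_{i,j}$ a.e., the displayed integral collapses to $\delta_{i,j}\int_{\hat\G}\overline{\alpha(x^{-1}y)}\,d\mu_{\hat\G}(\alpha)=\delta_{i,j}\delta_{x,y}$, so the map $(x,i)\mapsto\pi(x)\theta_i$ is injective with orthonormal image, which is exactly the assertion that $E(\A)$ is an orthonormal sequence.

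The one genuine subtlety I would flag is that the bracket is a priori only an element of $L^1(\hat\G)$, so one cannot directly expand it against the orthonormal basis $\{X_z\}$ of $L^2(\hat\G)$ and recover it from its coefficients by Parseval. The clean way around this is to lean on injectivity of the Fourier transform on $L^1(\hat\G)$ rather than on $L^2$ theory, using compactness of $\hat\G$ to guarantee that the comparison function $1$ indeed lies in $L^1(\hat\G)$. Everything else is routine bookkeeping.
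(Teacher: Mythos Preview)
Your argument is correct. The paper itself does not prove this statement at all; it simply records it as a combination of Lemma~(2.8) and Proposition~(5.1) in \cite{HSWW}. Your direct approach---computing $\langle \pi(x)\theta_i,\pi(y)\theta_j\rangle$ as the Fourier transform of $[\theta_i,\theta_j]$ at $x^{-1}y$ and then invoking injectivity of the Fourier transform on $L^1(\hat\G)$---is exactly the natural self-contained proof, and your remark about working in $L^1$ rather than $L^2$ is the right way to handle the only potential pitfall.
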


\begin{rem}
Given a single vector $\psi \in \H$, we can replace $\H$ with $\langle \psi \rangle$ and take $\{\psi\}$ for our family of orthogonal generators. Then $T$ becomes the function $T_\psi \colon \langle \psi \rangle \to L^2(\hat{\G})$ from \eqref{eq:BrackIsomDef}. The range function $J_0 \colon \hat{\G} \to \{ \text{closed subspaces of }\C\}$ assigns $\C$ to every element of the set $\Omega_\psi$ from \eqref{eq:OmegaPsi}, and $\{0\}$ to every element of its complement. Taking $\A = \{\psi\}$ in Theorem \ref{thm:BrackFrame}, we see that the following are equivalent for constants $0 < A \leq B < \infty$:
\begin{enumerate}[(i)]
\item The orbit $(\pi(x) \psi)_{x\in \G}$ is a continuous frame for $\langle \psi \rangle$ with bounds $A,B$.
\item For a.e.\ $\alpha \in \Omega_\psi$, $A \leq [\psi,\psi](\alpha) \leq B$.
\end{enumerate}
This generalizes Theorem (5.7) of \cite{HSWW} for continuous frames. A similar analysis recovers \cite[Proposition (5.3)]{HSWW} from Theorem \ref{thm:BrackRiesz}.
\end{rem}

\begin{example}\label{examp:BrackReps}
Below are three prominent examples of dual integrable representations.

\noindent (i) If $\H_0$ is any separable Hilbert space, $\G$ acts on $L^2(\hat{\G}; \H_0)$ via the \emph{modulation representation} $\hat{\lambda}$ given by
\[ \hat{\lambda}(x) \varphi(\alpha) = \alpha(x)\cdot \varphi(\alpha). \]
This representation is dual integrable, and its bracket is given by the formula
\[ [\varphi, \psi](\alpha) = \langle \varphi(\alpha), \psi(\alpha) \rangle. \]

\medskip

\noindent (ii) Let $G$ be a second countable locally compact group. Any closed abelian subgroup $H \subset G$ acts on $L^2(G)$ by left translation. This representation is dual integrable, and the Zak transform gives a formula for the bracket. Indeed, \eqref{eq:ZakBrack} says that for $f,g \in L^2(G)$ and $\alpha \in \hat{H}$,
\[ [f,g](\alpha) = \langle (Zf)(\alpha^{-1}), (Zg)(\alpha^{-1}) \rangle_{L^2(H\backslash G)}. \]
When $G$ is abelian, the bracket can also be expressed in terms of the fiberization map. For $f,g \in L^2(G)$ and $\omega \in \hat{G}$, \eqref{eq:FibBrack} says that
\[ [f,g](\left. \omega \right|_H) = \langle ( \mathcal{T} f)(\omega^{-1}H^*), ( \mathcal{T} g)(\omega^{-1}H^*) \rangle_{L^2(H^*)}. \]
Theorems \ref{thm:HTIClass}, \ref{thm:HTIFrame}, and \ref{thm:HTIRiesz} can be recovered from Theorems \ref{thm:DualClas}, \ref{thm:BrackFrame}, and \ref{thm:BrackRiesz}, respectively.

\medskip

\noindent (iii) Let $G$ be a second countable LCA group with a closed subgroup $H$. Then $H \times H^*$ acts on $L^2(G)$ by translation and modulation. This representation is dual integrable, and the Zak transform gives a formula for the bracket, as follows. For any $f,g \in L^2(G)$, $\xi \in H$, and $\kappa \in H^*$, \eqref{eq:ZakTransMod} produces
\[ \langle f, L_\xi M_\kappa g \rangle_{L^2(G)} = \langle \tilde{Z} f, \tilde{Z} L_\xi M_\kappa g \rangle_{L^2(H\times H\backslash G)} = \int_{\hat{H}} \int_{H\backslash G} (\tilde{Z} f)(\alpha, Hx) \overline{ (\tilde{Z} g)(\alpha, Hx)}\cdot \alpha(\xi) \overline{\kappa(x)}\, d\mu_{H\backslash G} d\mu_{\hat{H}}(\alpha) \]
\[ = \int_{\hat{H}} \int_{H\backslash G} (\tilde{Z} f)(\alpha^{-1}, Hx) \overline{ (\tilde{Z} g)(\alpha^{-1}, Hx)}\cdot \overline{\alpha(\xi) \kappa(x)}\, d\mu_{H\backslash G} d\mu_{\hat{H}}(\alpha). \]
Since $H^* \cong (G/H)^\wedge$, Pontryagin Duality identifies $\widehat{H^*}$ with $H\backslash G$. For $Hx \in H\backslash G$, the corresponding character $X_{Hx} \in \widehat{H^*}$ is given by $X_{Hx}(\kappa) = \kappa(x)$. Thus,
\[ [f,g](\alpha,X_{Hx}) = (\tilde{Z}f)(\alpha^{-1}, Hx) \overline{ (\tilde{Z} g)(\alpha^{-1},Hx) }. \]
Theorems \ref{thm:TMIClass} and \ref{thm:TMIFrame} can be deduced from Theorems \ref{thm:DualClas} and \ref{thm:BrackFrame}.

\end{example}

We end with several equivalent conditions for dual integrability, continuing the list begun in \cite[Corollary 3.4]{HSWW}. The equivalence of (i) and (ii) below was essentially given there. From a philosophical perspective, the theorem below is the basis for our work on dual integrable representations, and the thread that connects Sections \ref{sec:frames}, \ref{sec:HTIStr}, and \ref{sec:DualInt}. We remind the reader that representations $\sigma$ and $\sigma'$ of $\G$ acting on Hilbert spaces $\H_\sigma$ and $\H_\sigma'$, respectively, are called \emph{unitarily equivalent} if there is a unitary $U\colon \H_\sigma \to \H_\sigma'$ such that $U \sigma(x) = \sigma'(x) U$ for all $x\in \G$.

\begin{theorem} \label{thm:DualEquiv}
For a representation $\sigma$ of $\G$, the following are equivalent:
\begin{enumerate}[(i)]
\item $\sigma$ is dual integrable, and the space on which it acts is separable.
\item There is a separable Hilbert space $\H_0$ for which $\sigma$ is unitarily equivalent to a subrepresentation of the modulation representation on $L^2(\G; \H_0)$.
\item There is a second countable locally compact group $G$ containing $\G$ as a closed subgroup, and $\sigma$ is unitarily equivalent to the left translation action of $\G$ on a $\G$-TI subspace of $L^2(G)$.
\end{enumerate}
\end{theorem}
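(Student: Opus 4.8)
The plan is to establish the cycle of implications (i) $\Rightarrow$ (ii) $\Rightarrow$ (iii) $\Rightarrow$ (i). For (i) $\Rightarrow$ (ii) I would start from a dual integrable $\sigma$ acting on a separable space $\H$, fix a family $(\theta_i)_{i\in I}$ of orthogonal generators (these exist by Zorn's Lemma), and note that separability of $\H = \bigoplus_{i\in I}\langle \theta_i\rangle$ forces $I$ to be countable, so that $\H_0 := l^2(I)$ is separable. Corollary \ref{cor:BrackIsom} then furnishes a linear isometry $T\colon \H \to L^2(\hat{\G};l^2(I))$ with $T(\sigma(x)\varphi) = X_x\cdot T(\varphi)$. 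Since $(X_x\cdot T\varphi)(\alpha) = \alpha(x)(T\varphi)(\alpha)$ is exactly the modulation representation $\hat\lambda(x)$ on $L^2(\hat\G;l^2(I))$ of Example \ref{examp:BrackReps}(i), and since $T(\H)$ is a closed, modulation-invariant (being $\D$-MI) subspace, the map $T$ is a unitary equivalence of $\sigma$ with the subrepresentation of $\hat\lambda$ on $T(\H)$. This is (ii).

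The implication (iii) $\Rightarrow$ (i) is the shortest. By Example \ref{examp:BrackReps}(ii), the left translation action of $\G$ on $L^2(G)$ is dual integrable, with bracket supplied by the Zak transform, and as observed before Theorem \ref{thm:DualClas} its restriction to any $\G$-TI subspace $M$ is again dual integrable with the same bracket. Because $G$ is second countable, $L^2(G)$ and hence $M$ are separable. Both dual integrability and separability are preserved under unitary equivalence (transport the bracket through the intertwining unitary), so the given $\sigma$ inherits them, yielding (i).

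The heart of the argument is (ii) $\Rightarrow$ (iii), where I must manufacture the ambient group $G$. The idea is to realize $\H_0$ as $L^2(K)$ for a suitable second countable locally compact $K$ and then set $G = \G\times K$: taking $K = \Z/n\Z$ when $\dim\H_0 = n < \infty$ and $K = \Z$ when $\H_0$ is infinite-dimensional gives $L^2(K) = l^2(K)\cong \H_0$. Then $G$ is second countable locally compact and contains $\G = \G\times\{e_K\}$ as a closed subgroup, with $\G\backslash G \cong K$ and $\mu_{\G\backslash G} = \mu_K$ since Haar measure factors across the direct product. The Zak transform of Theorem \ref{thm:Zak} becomes a unitary $Z\colon L^2(G) \to L^2(\hat\G; L^2(K)) \cong L^2(\hat\G;\H_0)$ which by \eqref{eq:ZakTrans} intertwines $L_\xi$ with multiplication by $\alpha(\xi^{-1})$. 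Composing with the unitary $(C\varphi)(\alpha) = \varphi(\alpha^{-1})$ — isometric because every LCA group is unimodular, so inversion preserves Haar measure — converts this into the modulation $\hat\lambda(\xi)$. Thus $CZ$ exhibits the translation representation of $\G$ on $L^2(G)$ as unitarily equivalent to the full modulation representation on $L^2(\hat\G;\H_0)$. Since $\sigma$ is, by hypothesis, equivalent to a subrepresentation of the latter on some invariant subspace $N$, pulling $N$ back through $(CZ)^{-1}$ gives a $\G$-TI subspace of $L^2(G)$ on which the translation action is equivalent to $\sigma$, which is (iii).

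I expect the main obstacle to be the bookkeeping in (ii) $\Rightarrow$ (iii): confirming that the measure on $\G\backslash G$ really matches $\mu_K$ so that $Z$ lands in $L^2(\hat\G;\H_0)$, and carefully tracking the inversion $\alpha\mapsto\alpha^{-1}$ so that the translation action is matched with $\hat\lambda$ itself rather than its contragredient. The remaining points — that $\D$-MI subspaces are precisely the modulation-invariant ones, that invariant subspaces correspond under a unitary equivalence, and the elementary realization of an arbitrary separable $\H_0$ as some $L^2(K)$ — should be routine.
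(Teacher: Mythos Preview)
Your proposal is correct and follows essentially the same route as the paper: (iii) $\Rightarrow$ (i) via Example \ref{examp:BrackReps}(ii), (i) $\Rightarrow$ (ii) via Corollary \ref{cor:BrackIsom}, and (ii) $\Rightarrow$ (iii) by realizing $\H_0$ as $l^2(K)$ for a countable cyclic group $K$, forming $G=\G\times K$, and pulling the modulation-invariant subspace back through the Zak transform. Your treatment is in fact slightly more careful than the paper's in two places: you explicitly argue that the index set $I$ of orthogonal generators is countable (needed for $\H_0=l^2(I)$ to be separable), and you insert the inversion unitary $C$ to reconcile \eqref{eq:ZakTrans}, which gives $\alpha(\xi^{-1})$, with the modulation representation $\hat\lambda(\xi)\varphi(\alpha)=\alpha(\xi)\varphi(\alpha)$---a point the paper glosses over.
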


\begin{proof}
That (iii) implies (i) is the content of Example \ref{examp:BrackReps}(ii). Corollary \ref{cor:BrackIsom} says that (i) implies (ii). Suppose (ii) holds. Without loss of generality, we may assume that $\H_0 = l^2(K)$ for some countable set $K$. Give $K$ the structure of a cyclic group, and let $G=\G \times K$. Let $\gamma \colon \G\backslash G \to G$ be the Borel section with fundamental domain $\gamma(\G \backslash G) = K \subset G$. Then the Zak transform is a unitary map $Z \colon L^2(G) \to L^2(\hat{\G};l^2(K))$ intertwining the translation action of $\G$ on $L^2(G)$ with modulation on $L^2(\hat{\G}; l^2(K))$. Following the unitary equivalence in (ii) with $Z^{-1}$ proves (iii).
\end{proof}

\medskip

%ACKNOWLEDGEMENTS====================================================

\section{Acknowledgements}
I am deeply indebted to Prof.\ Marcin Bownik, for all the reasons a Ph.D.\ student is normally indebted to a patient and insightful advisor, and in particular for reading the manuscript and giving helpful suggestions. I thank him especially for suggesting that Theorems \ref{thm:HTIFrame} and \ref{thm:HTIRiesz} should have measure theoretic generalizations. I also wish to thank Prof.\ Kenneth A.\ Ross, who put me on the spot in preliminary examinations with a question I could not answer. That question led to Theorem \ref{thm:RMeasIsom}, and whence to the paper before you. I thank him also for reading the manuscript. This research was funded in part by NSF grant DMS-1265711, and for that, I am grateful.

%BIBLIOGRAPHY===========================================================

\nocite{*}
\bibliographystyle{abbrv}
\bibliography{abelian}

\end{document}